\documentclass{article}
\usepackage{graphicx} % Required for inserting images
\usepackage[letterpaper,top=2cm,bottom=2cm,left=3cm,right=3cm,marginparwidth=1.75cm]{geometry}
\usepackage{amsmath,amssymb,amsthm}
\usepackage[colorlinks=true, allcolors=blue, hypertexnames=false]{hyperref}
\usepackage{comment}
\usepackage{commath}
\usepackage{cleveref}
\usepackage{algorithm}
\usepackage{algpseudocode}

\usepackage{tikz}
\usetikzlibrary{arrows.meta,positioning}

\newtheorem{thm}{Theorem}[section]
\newtheorem{proposition}{Proposition}[section]
\newtheorem{corollary}{Corollary}[section]

\newtheorem{definition}{Definition}[section]

\newtheorem{lemma}{Lemma}[section]

\newtheorem{rmk}{Remark}[section]
\newtheorem{assumption}{Assumption}[section]

\numberwithin{equation}{section}

\DeclareMathOperator{\supp}{supp}

\def\cP{\mathcal{P}}

\DeclareMathOperator*{\argmin}{arg\,min}

\title{Kullback--Leibler Mirror-Prox for Measure-Valued Variational Inequalities and Mean-Field Equilibria\thanks{\textit{2020 Mathematics Subject Classification.} Primary 91A16; Secondary 49J40, 65K15.\newline\textit{Keywords.} mean field games, variational inequalities, extragradient method, relative entropy, Lasry--Lions monotonicity.}}
\author{Erhan Bayraktar\thanks{Department of Mathematics, University of Michigan. E-mail addresses: \href{mailto:erhan@umich.edu}{\nolinkurl{erhan@umich.edu}}, \href{mailto:iekren@umich.edu}{\nolinkurl{iekren@umich.edu}}, and \href{mailto:ziqingzh@umich.edu}{\nolinkurl{ziqingzh@umich.edu}}.}\textsuperscript{,\,}\thanks{E. Bayraktar is supported in part by NSF Grant DMS-2602036 and in part by the Susan M. Smith Professorship.}
\and Ibrahim Ekren\footnotemark[2]\textsuperscript{,\,}\thanks{I. Ekren is supported in part by NSF Grant DMS-2406240.}
\and Lu Vy\thanks{Department of Operations Research and Financial Engineering, Princeton University. E-mail address: \href{mailto:lv4809@princeton.edu}{\nolinkurl{lv4809@princeton.edu}}.}
\and Ziqing Zhang\footnotemark[2]}

\hypersetup{
  pdftitle={Kullback–Leibler Mirror-Prox for Measure-Valued Variational Inequalities and Mean-Field Equilibria},
  pdfauthor={Erhan Bayraktar, Ibrahim Ekren, Lu Vy, and Ziqing Zhang},
  pdfsubject={Static mean field games and measure-valued variational inequalities},
pdfkeywords={mean field games, variational inequalities, extragradient method, relative entropy, Lasry--Lions monotonicity}
}
\date{August 2026}

\begin{document}

\maketitle

\begin{abstract}
We study the computation of static mean-field equilibria on a compact state space by formulating the equilibrium condition as a variational inequality over probability measures. We propose an entropic variant of Korpelevich's extragradient algorithm---the Kullback--Leibler Mirror-Prox method---in which Euclidean projections are replaced by relative-entropy proximal steps. Each half-step is therefore an explicit exponential reweighting of the current measure, implemented on a finite state-space discretization. Under Lasry--Lions monotonicity and continuity assumptions, we prove convergence of mesh-refined ergodic averages and obtain finite-iteration Minty-residual and approximate-equilibrium bounds that jointly quantify iteration and discretization errors. Under strong monotonicity, we derive metric convergence rates for the last, best, and averaged iterates. We also develop a KL-type Tikhonov regularization that selects the equilibrium minimizing relative entropy with respect to a reference measure. The framework applies to potential and nonpotential cost operators and does not require differentiability or convexity of the cost in the individual state.
\end{abstract}

\section{Introduction}

Mean field games (MFGs), developed independently by Lasry and Lions
\cite{lasry2007mean} and Huang, Malham\'e, and Caines
\cite{huang2006large}, provide a framework for studying strategic
interactions in large populations. In the mean field limit, the influence of
other agents is summarized by a population distribution, and an
equilibrium requires consistency between this distribution and the optimal
behavior that it induces.

A substantial literature on computational methods for mean field games has
emerged; see \cite{achdou2020mean,lauriere2021numerical,lauriere2022learning}
for surveys. Classical numerical approaches exploit the PDE characterization
of MFGs arising from dynamic optimal control problems. In both stationary and
time-dependent settings, the associated Hamilton--Jacobi and Fokker--Planck
equations can be discretized and solved using, for example, finite-difference
schemes
\cite{achdou2010mean,achdou2013mean},
policy iteration \cite{cacace2021policy}, proximal methods
\cite{briceno2018proximal}, or splitting methods \cite{liu2021computational}. 
A complementary
probabilistic approach represents dynamic MFGs through McKean--Vlasov
forward--backward stochastic differential equations and develops numerical
schemes based on simulation and stochastic approximation
\cite{angiuli2019cemracs}.
Optimization and learning approaches, including fictitious play, conditional-gradient methods,
dual averaging, and online mirror descent, have also been developed under
various structural assumptions such as potentiality or monotonicity
\cite{cardaliaguet2017learning,perrin2020fictitious,lavigne2023generalized,
hadikhanloo2017learning,hadikhanloo2022learning,perolat2022scaling}.
More recently, methods based on reinforcement learning and machine learning,
including Q-learning and deep reinforcement learning, have been developed to
compute MFG equilibria
\cite{guo2019learning,lauriere2022scalable,magnino2026solving}.

The absence of an intrinsic time component in a static MFG considerably
simplifies its equilibrium structure: each agent makes a one-shot choice
against the population distribution, so the equilibrium can be studied
directly on a space of probability measures without an underlying
state-evolution equation. This makes static MFGs particularly amenable to methods that operate directly at the level of population distributions. 

We adopt this perspective and take the population-level cost operator
\(
\mu \longmapsto J(\mu,\cdot)
\)
as primitive. We formulate the static mean field equilibrium problem directly
as a variational inequality on the space of probability measures and apply
the extragradient principle in the geometry induced by relative entropy. This
leads to a KL Mirror-Prox method that acts directly on population
distributions without requiring a PDE representation, a potential structure,
or the solution of an individual best-response problem at each iteration.

Let $\Omega\subset\mathbb{R}^d$ be a nonempty compact state space, and let
$\mathcal{P}(\Omega)$ denote the set of Borel probability measures on
$\Omega$. We consider a cost functional
\[
    J:\mathcal{P}(\Omega)\times\Omega\to\mathbb{R},
\]
where $J(\mu,x)$ is the cost incurred by an agent choosing state $x$ when the
population distribution is $\mu$. A measure
$\mu^*\in\mathcal{P}(\Omega)$ is a mean field equilibrium (MFE) if
\[
    \mu^*\left(
        \argmin_{x\in\Omega}J(\mu^*,x)
    \right)=1.
\]
Thus, at equilibrium, the population is concentrated on states that are
optimal against that same population distribution.

For a continuous function $f:\Omega\to\mathbb{R}$ and a finite signed measure
$\xi$, write
\[
    \langle f,\xi\rangle
    :=
    \int_\Omega f(x)\,\xi(dx).
\]
Under appropriate integrability
conditions, the equilibrium condition is
equivalent to the following variational inequality (VI):
\begin{equation}\label{eq:MFE-VI}
    \left\langle
        J(\mu^*,\cdot),\eta-\mu^*
    \right\rangle
    \geq 0
    \qquad
    \forall \eta\in\mathcal{P}(\Omega).
\end{equation}
Indeed, \eqref{eq:MFE-VI} states that $\mu^*$ minimizes the linear functional
\[
    \eta\longmapsto
    \left\langle J(\mu^*,\cdot),\eta\right\rangle
\]
over $\mathcal{P}(\Omega)$, which holds precisely when $\mu^*$ is supported on
the minimizers of $J(\mu^*,\cdot)$.
Hence the static MFE problem can be viewed as a variational inequality on the
space of probability measures.

This formulation directly parallels the classical Stampacchia variational
inequality. Given a closed convex set $Z\subseteq\mathbb{R}^d$ and an operator
$F:Z\to\mathbb{R}^d$, the latter asks for $z_*\in Z$ such that
\begin{equation}\label{eq:VI-Korpelevich}
    \langle F(z_*),z-z_*\rangle
    \geq 0
    \qquad
    \forall z\in Z.
\end{equation}
Its Minty counterpart is
\begin{equation}\label{eq:classical-MVI}
    \langle F(z),z-z_*\rangle
    \geq 0
    \qquad
    \forall z\in Z.
\end{equation}
Monotonicity of $F$ implies that every solution of
\eqref{eq:VI-Korpelevich} satisfies \eqref{eq:classical-MVI}, while
continuity gives the reverse implication. Thus, for continuous monotone
operators, the VI and MVI solution sets coincide.

Guided by this classical correspondence, we associate the following
measure-space Minty variational inequality with \eqref{eq:MFE-VI}:
\begin{equation}\label{eq:MFE-MVI}
    \left\langle
        J(\eta,\cdot),\eta-\mu^*
    \right\rangle
    \geq 0
    \qquad
    \forall \eta\in\mathcal{P}(\Omega).
\end{equation}
If the cost operator is monotone in the sense that
\[
    \left\langle
        J(\mu,\cdot)-J(\nu,\cdot),\mu-\nu
    \right\rangle
    \geq 0
    \qquad
    \forall \mu,\nu\in\mathcal{P}(\Omega),
\]
then every solution of \eqref{eq:MFE-VI} satisfies
\eqref{eq:MFE-MVI}. Conversely, an appropriate continuity assumption on
$\mu\mapsto J(\mu,\cdot)$ implies that every solution of
\eqref{eq:MFE-MVI} satisfies \eqref{eq:MFE-VI}. Consequently, under
Lasry--Lions monotonicity and suitable continuity assumptions, the MFE condition, the VI
\eqref{eq:MFE-VI}, and the MVI \eqref{eq:MFE-MVI} are equivalent. Precise
statements are given in Subsection~\ref{subsec:eps-MFE}.

The classical VI viewpoint also suggests a natural computational approach.
A fundamental algorithm for solving finite-dimensional monotone VIs is
Korpelevich's extragradient method \cite{korpelevich1976extragradient}. Given
$z_k\in Z$, it performs the predictor--corrector steps
\begin{equation}\label{eq:korpelevich}
\begin{aligned}
    \bar z_k
    &:=
    P_Z\bigl(z_k-\lambda F(z_k)\bigr),\\
    z_{k+1}
    &:=
    P_Z\bigl(z_k-\lambda F(\bar z_k)\bigr),
\end{aligned}
\end{equation}
where \(P_Z:\mathbb{R}^d\to Z\) denotes the Euclidean projection onto the closed convex set \(Z\). The first operator evaluation produces a prediction $\bar z_k$, while the
second evaluates the operator at the predicted point and corrects the
original iterate $z_k$. When $F$ is monotone and $L$-Lipschitz continuous,
the solution set is nonempty, and $\lambda\in(0,1/L)$, classical
extragradient theory ensures convergence to a solution of
\eqref{eq:VI-Korpelevich}.

Equivalently, \eqref{eq:korpelevich} can be written in proximal form:
\[
    \bar z_k
    \in
    \argmin_{z\in Z}
    \left\{
        \lambda\langle F(z_k),z\rangle
        +\frac{1}{2}\lVert z-z_k\rVert^2
    \right\},
\]
and
\[
    z_{k+1}
    \in
    \argmin_{z\in Z}
    \left\{
        \lambda\langle F(\bar z_k),z\rangle
        +\frac{1}{2}\lVert z-z_k\rVert^2
    \right\}.
\]
This formulation shows that squared Euclidean distance is not intrinsic to
the extragradient construction. It can be replaced by a Bregman divergence
adapted to the geometry of the feasible set, as in mirror-prox methods
\cite{nemirovski2004prox}.

This suggests transporting the extragradient principle to the computation of MFEs on the space of probability measures. A natural choice of Bregman divergence on this space is the Kullback--Leibler divergence
\[
    D_{\mathrm{KL}}(\eta\Vert\mu)
    :=
    \begin{cases}
        \displaystyle
        \int_\Omega
        \log\left(\frac{d\eta}{d\mu}\right)d\eta,
        & \eta\ll\mu,\\[1.2ex]
        +\infty,
        & \text{otherwise}.
    \end{cases}
\]
Given a full-support initial distribution
$\mu_0\in\mathcal{P}(\Omega)$, we therefore introduce the
\emph{KL Mirror-Prox method}
\begin{equation*}
\begin{aligned}
    \nu_k
    &\in
    \argmin_{\eta\in\mathcal{P}(\Omega)}
    \left\{
        \lambda
        \left\langle J(\mu_k,\cdot),\eta\right\rangle
        +
        D_{\mathrm{KL}}(\eta\Vert\mu_k)
    \right\},\\
    \mu_{k+1}
    &\in
    \argmin_{\eta\in\mathcal{P}(\Omega)}
    \left\{
        \lambda
        \left\langle J(\nu_k,\cdot),\eta\right\rangle
        +
        D_{\mathrm{KL}}(\eta\Vert\mu_k)
    \right\}.
\end{aligned}
\end{equation*}

The KL divergence is the Bregman divergence generated by negative entropy and is therefore the natural entropic analogue of the Euclidean geometry underlying the classical extragradient method. It offers two further advantages.

First, it is particularly well suited to $\mathcal{P}(\Omega)$: the probability constraints are enforced automatically, and the proximal steps reduce to exponential tilts without projections or inner optimization routines. In particular, when $\Omega$ is finite, each iteration requires only two evaluations of the cost operator and two normalized exponential updates. For a general compact state space, the same updates can be implemented on a finite approximation of $\Omega$. The precise algorithmic formulation is given in Section~\ref{sec:algo}. 

Second, the KL geometry is compatible with the linear duality pairing between functions and signed measures. This is the same pairing that underlies both the MFE variational inequality and Lasry--Lions monotonicity. The latter is the standard monotonicity condition in mean field game theory and, under an appropriate strictness assumption, yields uniqueness of the equilibrium.

\paragraph{Contributions.}
For a general compact state space \(\Omega\), we formulate the static mean
field equilibrium problem as a variational inequality on
\(\mathcal P(\Omega)\) and propose a KL Mirror-Prox method for its computation.
We implement the resulting measure-valued algorithm through finite-dimensional
approximations obtained by discretizing the state space. This framework
provides a unified variational-inequality perspective for analyzing both
algorithmic convergence and state-space discretization error under
Lasry--Lions monotonicity, and applies to both potential and nonpotential cost
operators.

Our first contribution is a convergence theory for general compact state
spaces, together with a systematic connection between variational residuals
and approximate equilibria. Under Lasry--Lions monotonicity and suitable
continuity assumptions, we show that weak limit points of the mesh-refined
ergodic averages are mean field equilibria and derive, for the averaged
iterates, a quantitative Minty residual estimate valid for a finite mesh and
a finite iteration count. We further introduce the notions of approximate MFE,
variational-inequality (VI), and Minty variational-inequality (MVI) solutions
and establish the implications among their exact and approximate forms.
Combining these relations with the Minty residual estimate yields explicit
approximate-MFE guarantees for the averaged iterates, with errors depending
jointly on the mesh size \(h\) and the iteration count \(K\). In particular,
these bounds provide explicit choices of \(h\) and \(K\) sufficient to
achieve any prescribed approximate-MFE accuracy on a general compact state
space.

Our second contribution consists of explicit metric convergence rates for the last iterate, the best iterate, and the averaged iterate under strong Lasry--Lions monotonicity. These estimates quantify the optimization and discretization errors jointly and, in turn, yield explicit choices of the mesh size \(h\) and the number of iterations \(K\) required to attain any prescribed metric accuracy.

Our third contribution is an equilibrium-selection mechanism based on KL-type Tikhonov regularization. Given a full-support reference measure \(\rho\), we show that, whenever the minimum is attained uniquely and is finite, the vanishing-penalty limit selects the mean field equilibrium minimizing
\(
D_{\mathrm{KL}}(\cdot\|\rho)
\)
over the equilibrium set.

Finally, we complement the theoretical results with numerical experiments illustrating the convergence behavior.

\subsection{Related Work}\label{sec:related-work}
Our work connects four strands of the literature: static MFGs and equilibrium dynamics, mirror descent and proximal methods in monotone MFGs, extragradient and two-step variational-inequality methods, and entropy-based regularization and equilibrium selection. We review these connections separately, emphasizing
how the population-level KL Mirror-Prox formulation and its quantitative
iteration--discretization analysis differ from existing approaches.

\paragraph{Static MFGs and equilibrium dynamics.}
Several recent works address static mean field equilibria directly.
Hynd \cite{hynd2023evolution} studies
continuous-time mixed-strategy dynamics for monotone static MFGs and proves
convergence of their Ces\`aro averages to mean field equilibria. Yardim,
Cayci, and He \cite{yardim2025variational} formulate finite-action static MFGs
as variational inequalities and develop independent learning algorithms with
finite-sample guarantees. Tangpi and Touzi
\cite{tangpi2025particle} approximate static mean field equilibria through
McKean--Vlasov Langevin dynamics and associated particle systems under
Lasry--Lions or displacement monotonicity.
The quantitative equilibrium-approximation regimes considered there impose
either positive displacement monotonicity or Lasry--Lions monotonicity
together with uniform convexity in the individual state. Although this
consequence is not emphasized there, both regimes yield Dirac MFEs in their
setting. Under positive displacement monotonicity, applying the condition to
the product coupling of an equilibrium measure with itself forces its support
to be a singleton. Under uniform convexity in the individual state, the
equilibrium cost has a unique minimizer and hence again supports only a Dirac
mass. Our setting imposes neither positive displacement monotonicity nor convexity of
\(x \mapsto J(\mu,x)\) and therefore allows genuinely non-Dirac mean field
equilibria with multiple actions in their support. Methodologically, we
instead apply a KL Mirror-Prox iteration directly to the population-level
variational inequality.

\paragraph{Mirror descent and proximal methods in monotone MFGs.}
Mirror-type methods have been studied extensively for MFGs and closely related
anonymous nonatomic games. Hadikhanloo \cite{hadikhanloo2017learning}
analyzes online mirror descent (OMD) for anonymous nonatomic games, including
first-order MFGs, under monotonicity and convexity of the individual cost in
the player's action. Hadikhanloo, Laraki, Mertikopoulos, and Sorin
\cite{hadikhanloo2022learning} study fictitious play and
dual-averaging/FTRL dynamics for finite-action monotone games of this type.
P\'erolat et al. \cite{perolat2022scaling} apply OMD to finite-state,
finite-action dynamic MFGs through policy updates based on \(Q\)-values,
proving qualitative convergence of continuous-time OMD under a strict
weak-monotonicity condition. More recently, Isobe, Abe, and Ariu
\cite{isobe2026last} establish last-iterate convergence under non-strict
monotonicity using a KL proximal-point scheme in which each outer iteration
requires solving a KL-regularized MFG subproblem.

Our main algorithmic distinction is the extragradient correction. Although the
corrected iterates admit an entropic-FTRL representation in terms of the
cumulative corrected costs \(J(\nu_t,\cdot)\), KL Mirror-Prox distinguishes
itself through the intermediate predictor used to generate these costs and yields
finite-iteration guarantees for the Minty residual under non-strict
Lasry--Lions monotonicity. In addition, our method acts directly on the
population measure \(\mu\in\mathcal P(\Omega)\) through evaluations of the cost
operator \(J(\mu,\cdot)\); hence, unlike action-space OMD as in Hadikhanloo
\cite{hadikhanloo2017learning}, it does not require convexity of
\(x\mapsto J(\mu,x)\). Our analysis jointly quantifies state-space
discretization and iteration errors through the mesh size \(h\) and iteration
count \(K\). This joint analysis differs from both learning analyses for a
fixed finite model and discrete-to-continuous consistency results such as
Hadikhanloo and Silva \cite{hadikhanloo2019finite}, which prove convergence of refined
finite MFG equilibria to a continuous first-order MFG but do not provide a
quantitative mesh-error bound or account for terminating the equilibrium
algorithm after finitely many iterations.

\paragraph{Extragradient and two-step mirror methods for MFGs.}
Extragradient and related two-step mirror methods have recently been
applied to MFG problems. Meynard
\cite{meynard2026extragradient} formulates mean field games of controls and
associated mean-field FBSDEs as monotone variational inequalities in a Hilbert
space and develops an extragradient method with quantitative convergence under sufficiently
strong displacement-monotonicity assumptions. While preparing the present
version, we became aware of the June 2026 preprint by Abdulaziz, Ashrafyan,
Gevorgyan, and Gomes \cite{abdulaziz2026bregman}. They formulate a
low-order regularization of a stationary MFG PDE system as a variational
inequality on a product of Lebesgue and Sobolev spaces and introduce a
mixed-exponent Bregman geometry adapted to that Banach-space formulation.
Their two mirror steps reuse the same operator evaluation at the current
iterate rather than reevaluating the operator at the intermediate predictor.
Their convergence theory relies on a quantitative Lasry--Lions-type
monotonicity condition on the density coupling and establishes strong
asymptotic convergence for each fixed regularization parameter.

The settings and guarantees differ in several respects. We study a static,
one-shot MFG directly on \(\mathcal P(\Omega)\) for a general compact state
space, rather than a stationary MFG system for a density--value-function
pair. Our KL Mirror-Prox corrector reevaluates the cost operator at the
intermediate predictor, and we obtain explicit finite-iteration and finite-mesh
guarantees under non-strict Lasry--Lions monotonicity. We also quantify metric convergence under strong
monotonicity and develop a reference-dependent KL equilibrium-selection
mechanism for the original, unregularized static MFG.

\paragraph{Entropy-based regularization and equilibrium selection.}
Entropy and relative entropy have been used in MFGs primarily to regularize
algorithms or control problems. Guo, Xu, and Zariphopoulou
\cite{guo2022entropy} study entropy regularization in finite-horizon MFG
learning, where it promotes exploration and can improve stability and
convergence. Related reinforcement-learning approaches with entropy
regularization use Boltzmann-type policies to smooth the best-response map and
facilitate convergence to approximate MFG equilibria
\cite{cui2021approximately}. In these works, entropy regularization is built
into the learning or control problem itself, yielding a regularized MFG with
modified best responses and equilibria.

By contrast, we use KL regularization as a vanishing perturbation to select an equilibrium of the original, unregularized problem. This is closer in spirit to Tikhonov regularization for monotone variational inequalities, which is used to regularize merely monotone problems and obtain stable iterative
procedures; see, e.g., \cite{koshal2012regularized}. Equilibrium selection in MFGs has also been studied through other vanishing perturbations. For example, Cecchin and Delarue \cite{cecchin2022selection} establish a selection principle based on vanishing common noise for potential finite-state MFGs. Our mechanism differs in that it selects the equilibrium through an explicit reference-dependent optimization criterion over the equilibrium set.

\paragraph{Organization of the paper.}
The remainder of the paper is organized as follows.
Section~\ref{sec:algo} introduces the KL Mirror-Prox method and its
finite-dimensional implementation on discrete state-space approximations.
Section~\ref{sec:LL-mono} develops the ergodic convergence theory under
Lasry--Lions monotonicity and establishes the implications among the
corresponding approximate MFE, VI, and MVI notions.
Section~\ref{sec:strong-LL-mono} establishes finite-iteration metric
convergence rates under strong Lasry--Lions monotonicity.
Section~\ref{sec:MFE-selection} develops an equilibrium-selection mechanism
based on a KL-type Tikhonov penalty. Finally, Section~\ref{sec:experiment}
presents numerical experiments. We next introduce notation used throughout
the paper.

\paragraph{Notation.}
Let \(\Omega\subset\mathbb R^d\) be compact. We denote by \(\mathcal P(\Omega)\) the set of Borel probability measures on \(\Omega\). For two probability measures \(\mu\) and \(\nu\), their
\(L^1\)-distance is
\[
    \lVert\mu-\nu\rVert_1
    :=
    \int_{\Omega}
        \left|
            \frac{d\mu}{d\xi}
            -
            \frac{d\nu}{d\xi}
        \right|
    d\xi,
\]
where \(\xi\) is any measure dominating both \(\mu\) and \(\nu\).
Equivalently,
\[
    \lVert\mu-\nu\rVert_1
    =
    \int_{\Omega}|d\mu-d\nu|.
\]
With this convention, \(\lVert\mu-\nu\rVert_1\) is twice the usual total variation distance.

The Kullback--Leibler divergence of \(\eta\) from \(\mu\) is defined
by
\[
    D_{\mathrm{KL}}(\eta\Vert\mu)
    :=
    \begin{cases}
        \displaystyle
        \int_{\Omega}
            \log\left(\frac{d\eta}{d\mu}\right)d\eta,
        & \eta\ll\mu,\\[1.2ex]
        +\infty,
        & \text{otherwise}.
    \end{cases}
\]
For \(\mu,\nu\in\mathcal P(\Omega)\), their
\(1\)-Wasserstein distance is
\[
    W_1(\mu,\nu)
    :=
    \inf_{\pi\in\Pi(\mu,\nu)}
    \int_{\Omega\times\Omega}
        |x-y|\,\pi(dx,dy),
\]
where \(\Pi(\mu,\nu)\) denotes the set of couplings of \(\mu\) and \(\nu\).
Finally, for an integrable function \(f\), we recall the shorthand
\[
    \langle f,\mu\rangle
    :=
    \int_{\Omega}f(x)\,\mu(dx).
\]

\section{KL Mirror-Prox and Finite-Dimensional Implementation}\label{sec:algo}

Let $\Omega\subset\mathbb{R}^d$ be compact. Given a stepsize
$\lambda>0$ and an initial measure
$\mu_0\in\mathcal{P}(\Omega)$, we define the KL Mirror-Prox iteration by
\begin{equation*}
\begin{aligned}
    \nu_k
    &\in
    \argmin_{\eta\in\mathcal{P}(\Omega)}
    \left\{
        \lambda
        \left\langle J(\mu_k,\cdot),\eta\right\rangle
        +
        D_{\mathrm{KL}}(\eta\Vert\mu_k)
    \right\},\\
    \mu_{k+1}
    &\in
    \argmin_{\eta\in\mathcal{P}(\Omega)}
    \left\{
        \lambda
        \left\langle J(\nu_k,\cdot),\eta\right\rangle
        +
        D_{\mathrm{KL}}(\eta\Vert\mu_k)
    \right\}.
\end{aligned}
\end{equation*}
The first subproblem is the entropic prediction step, while the second
evaluates the cost at the predictor $\nu_k$ and corrects the original
iterate $\mu_k$. Both subproblems are regularized relative to $\mu_k$,
mirroring the fact that both proximal steps of the classical
Mirror-Prox method are centered at the current iterate.

\begin{comment}
This two-step correction should not be confused with fictitious play. In the classical MFG fictitious-play scheme of Cardaliaguet and Hadikhanloo \cite{cardaliaguet2017learning}, each stage computes an exact best response to the current empirical average and then updates that average with a vanishing weight. The convergence result established there is for potential MFGs. By contrast, KL Mirror-Prox requires only two evaluations of the cost operator and two explicit entropic reweightings per iteration, uses a constant stepsize, and provides nonasymptotic residual bounds under Lasry--Lions monotonicity and the stated regularity assumptions, without requiring a potential structure.
\end{comment}

Whenever the corresponding normalizing constants are finite and positive,
the minimizers are uniquely given by the exponential tilts
\begin{equation}\label{eq:KL-MP-Gibbs}
\begin{aligned}
    \frac{d\nu_k}{d\mu_k}(x)
    &=
    \frac{
        \exp\bigl(-\lambda J(\mu_k,x)\bigr)
    }{
        \displaystyle
        \int_{\Omega}
            \exp\bigl(-\lambda J(\mu_k,z)\bigr)\,\mu_k(dz)
    },\\
    \frac{d\mu_{k+1}}{d\mu_k}(x)
    &=
    \frac{
        \exp\bigl(-\lambda J(\nu_k,x)\bigr)
    }{
        \displaystyle
        \int_{\Omega}
            \exp\bigl(-\lambda J(\nu_k,z)\bigr)\,\mu_k(dz)
    }.
\end{aligned}
\end{equation}

{
KL Mirror-Prox is related to both fictitious play and follow-the-regularized-leader (FTRL), but differs from them in important ways. In the classical MFG fictitious-play scheme of Cardaliaguet and Hadikhanloo \cite{cardaliaguet2017learning}, each iteration computes an exact best response to an empirical average of past population measures and then updates this average with a vanishing weight. The convergence result established there relies on a potential structure. By contrast, KL Mirror-Prox neither solves exact best-response problems nor requires the MFG to be potential; each iteration instead uses two evaluations of the cost operator and two explicit entropic reweightings.

From this perspective, KL Mirror-Prox is more closely related to the dual-averaging/FTRL dynamics studied by Hadikhanloo et al.\ \cite{hadikhanloo2022learning}. Whereas fictitious play averages past \emph{measures}, FTRL aggregates past \emph{costs} and computes a regularized response to the resulting cumulative cost. Indeed, recursively expanding the corrected update in \eqref{eq:KL-MP-Gibbs} gives
\[
    \mu_k(dx)
    \propto
    \mu_0(dx)
    \exp\!\left(-\lambda\sum_{t=0}^{k-1}J(\nu_t,x)\right),
\]
showing that $\mu_k$ is an entropically regularized response to the cumulative corrected cost evaluations. KL Mirror-Prox can therefore be interpreted as an optimistic (extragradient) modification of entropic FTRL: the evaluation $J(\mu_k,\cdot)$ is used first to form the predictor $\nu_k$, and the corrected evaluation $J(\nu_k,\cdot)$ is then incorporated into the next iterate. This predictor--corrector step is particularly advantageous for monotone problems: under Lasry--Lions monotonicity and suitable Lipschitz regularity of $J$, it permits a constant stepsize and an $O(K^{-1})$ ergodic residual bound. By comparison, the discrete-time dual-averaging analysis of Hadikhanloo et al.\ uses a vanishing learning rate of order $K^{-1/2}$; its regret estimate yields an $O(K^{-1/2})$ ergodic bound for monotone games.}

\paragraph{Finite state space.}
Suppose that $\Omega=\{x_1,\ldots,x_N\}$. Identifying
\[ \mathcal{P}(\Omega)\simeq\Delta_N:= \left\{ p\in\mathbb{R}^N: p_i\ge0,\quad\sum_{i=1}^N p_i=1 \right\}, \]
for $k=0,1,\dots,K$, write
\[
    \mu_k=\sum_{i=1}^N p_{k,i}\delta_{x_i}\in\Delta_N,
    \qquad
    \nu_k=\sum_{i=1}^N q_{k,i}\delta_{x_i}\in\Delta_N.
\]
Then \eqref{eq:KL-MP-Gibbs} reduces to the explicit updates
\begin{equation}\label{eq:finite-KL-MP}
\begin{aligned}
    q_{k,i}
    &=
    \frac{
        p_{k,i}e^{-\lambda J(\mu_k,x_i)}
    }{
        \displaystyle
        \sum_{j=1}^N
        p_{k,j}e^{-\lambda J(\mu_k,x_j)}
    },\\
    p_{k+1,i}
    &=
    \frac{
        p_{k,i}e^{-\lambda J(\nu_k,x_i)}
    }{
        \displaystyle
        \sum_{j=1}^N
        p_{k,j}e^{-\lambda J(\nu_k,x_j)}
    },
    \qquad i=1,\ldots,N.
\end{aligned}
\end{equation}
% Starting from a full-support initialization
% \[
%     p_0\in\Delta_N^\circ
%     :=
%     \left\{
%         p\in\mathbb{R}^N:
%         p_i> 0,\quad
%         \sum_{i=1}^N p_i=1
%     \right\},
% \]
% the iterates remain in $\Delta_N^\circ$, provided the evaluated costs are
% finite. 

Once the two cost vectors have been evaluated, each iteration requires only
componentwise exponentiation, multiplication, and normalization. Thus,
excluding the cost evaluations, its computational cost is $O(N)$ per
iteration, with no projection or inner optimization routine. The choices of the stepsize $\lambda$ and the number of iterations $K$ are
specified later through the convergence theory.

\paragraph{General compact state space.}
Let $h>0$. Let $X_h$ be a finite subset of $\Omega$ and $Q_h:\Omega\to X_h$ be a measurable mesh projection
satisfying
\begin{equation}\label{eq:mesh-projection}
    \sup_{x\in\Omega}
    \lvert Q_h(x)-x\rvert
    \leq h.
\end{equation}
% Write
% \[
%     X_h=\{x_1^h,\ldots,x_{N_h}^h\}.
% \]
Since $X_h\subset\Omega$, every measure in $\mathcal{P}(X_h)$ is naturally
viewed as an element of $\mathcal{P}(\Omega)$.

The KL Mirror-Prox method is then applied to the finite state space $X_h$ exactly as in the finite-state case above.
Its computational cost, excluding cost evaluations, is $O(N_h)$. 
The convergence theory below specifies the choices of the stepsize
\(\lambda\), mesh resolution \(h\), and number of iterations \(K\).

\begin{rmk}
\label{rem:existence-mesh-projections}
The existence of the finite sets $X_h$ and measurable projections $Q_h$ is
automatic when $\Omega$ is a compact metric space. Indeed, compactness
implies total boundedness, so for every $h>0$ there exists a finite
$h$-net
\[
    X_h=\{x_1,\ldots,x_{N_h}\}\subset\Omega
\]
such that
\[
    \Omega
    \subseteq
    \bigcup_{i=1}^{N_h}B(x_i,h).
\]
After fixing an ordering of $X_h$, define $Q_h(x)$ as the first point
$x_i$ satisfying
\(
    x\in B(x_i,h).
\)
Equivalently,
\[
    Q_h^{-1}(\{x_i\})
    =
    \Omega\cap
    \left(
        B(x_i,h)
        \setminus
        \bigcup_{j<i}B(x_j,h)
    \right).
\]
The fibers of $Q_h$ are Borel sets, so $Q_h$ is measurable. Moreover, \eqref{eq:mesh-projection} is satisfied since
\(
    \lvert Q_h(x)-x\rvert<h
\)
for all $x\in\Omega$.
\end{rmk}

\section{Ergodic Convergence under Lasry--Lions Monotonicity}\label{sec:LL-mono}
This section develops the ergodic convergence theory of the KL Mirror-Prox method under Lasry--Lions monotonicity. The analysis proceeds in four stages. We first derive the KL three-point identity and the resulting one-step Mirror-Prox estimate. We then telescope this estimate to obtain an \(O(K^{-1})\) ergodic Minty residual bound. Next, we pass from finite state spaces to a general compact state space through finite-mesh consistency and mesh refinement, identifying weak limit points of the mesh-refined averages as MFEs and deriving quantitative Minty residual bounds. Finally, we establish the relations among approximate MFE, VI, and MVI conditions, which convert these Minty residual bounds into approximate-MFE guarantees with explicit dependence on mesh size \(h\) and iteration count \(K\).

\begin{assumption}[Standing assumption]\label{ass:standing}
Let \(\Omega\) be compact and define
\[
    D_\Omega:=\operatorname{diam}(\Omega) = \sup_{x,y\in\Omega}|x-y|.
\]
Assume that
\(
J(\mu,\cdot)\in C(\Omega)
\)
for every \(\mu\in\mathcal P(\Omega)\).
\end{assumption}
Compactness of the state space plays several roles. It ensures that every continuous cost \(J(\mu,\cdot)\) is bounded and uniformly continuous, that \(\mathcal P(\Omega)\) is compact for the topology of weak convergence, and that \(W_1\) metrizes weak convergence on \(\mathcal P(\Omega)\). Moreover, it allows the comparison between \(W_1\) and total variation. We assume throughout that Assumption~\ref{ass:standing} holds.

We next introduce several additional assumptions. Unlike the standing assumption above, these conditions will be imposed only for the results in which they are explicitly invoked.

\begin{assumption}[Lasry--Lions monotonicity]\label{ass:LL-mono}
For every \(\mu,\nu\in\mathcal P(\Omega)\),
\[
\langle J(\mu,\cdot)-J(\nu,\cdot),\mu-\nu\rangle\ge 0.
\]
\end{assumption}

\begin{assumption}[\(W_1\)-Lipschitz continuity]\label{ass:W1-lip}
There exists \(L_m>0\) such that, for every \(\mu,\nu\in\mathcal P(\Omega)\),
\[
\|J(\mu,\cdot)-J(\nu,\cdot)\|_\infty
\le
L_m W_1(\mu,\nu).
\]
\end{assumption}

\subsection{The KL three-point identity and one-step estimate}\label{subsec:KL-estimate}

Since the Kullback--Leibler divergence is the Bregman divergence associated with negative entropy, its three-point identity yields the fundamental descent estimates underlying mirror descent and Mirror-Prox methods.

In the present setting, the proximal subproblem is posed over the entire probability space \(\mathcal P(\Omega)\), and its unique solution is given by an exponential tilt of the reference measure. Consequently, the usual Bregman first-order inequality strengthens to an exact three-point identity. This identity also preserves finite relative entropy: any comparison measure with finite KL divergence relative to the initial distribution \(\mu_0\) has finite KL divergence relative to every measure generated by the KL Mirror-Prox iteration.

We begin with the precise form of this identity.

\begin{lemma}\label{lem:three-point}
Let \(\rho\in\mathcal P(\Omega)\), \(g\in C(\Omega)\), and \(\lambda>0\). Define \(\zeta\in\mathcal P(\Omega)\) by
\[
\frac{d\zeta}{d\rho}(x)
=
\frac{e^{-\lambda g(x)}}{\int_\Omega e^{-\lambda g(x)}\,\rho(dx)}.
\]
Then \(\zeta\sim\rho\) and it is the unique minimizer of
\[
\inf_{\eta\in\mathcal P(\Omega)}
\left\{
\lambda\langle g,\eta\rangle
+
D_{\mathrm{KL}}(\eta\|\rho)
\right\}.
\]
Moreover, for every \(\eta\in\mathcal P(\Omega)\) such that
\(D_{\mathrm{KL}}(\eta\|\rho)<\infty\), we have
\[
D_{\mathrm{KL}}(\eta\|\zeta)<\infty
\]
and
\[
\lambda\langle g,\zeta-\eta\rangle
=
D_{\mathrm{KL}}(\eta\|\rho)
-
D_{\mathrm{KL}}(\eta\|\zeta)
-
D_{\mathrm{KL}}(\zeta\|\rho).
\]
\end{lemma}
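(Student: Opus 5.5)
The plan is to compute everything explicitly from the Gibbs density. Since \(g\) is continuous on the compact set \(\Omega\), it is bounded, say \(\|g\|_\infty<\infty\). Hence the normalizer
\[
Z:=\int_\Omega e^{-\lambda g}\,d\rho
\]
lies in \([e^{-\lambda\|g\|_\infty},e^{\lambda\|g\|_\infty}]\). The density \(d\zeta/d\rho=e^{-\lambda g}/Z\) is therefore bounded above and below by positive constants. This gives \(\zeta\ll\rho\) and \(\rho\ll\zeta\), i.e.\ \(\zeta\sim\rho\), and
\[
\log\frac{d\zeta}{d\rho}=-\lambda g-\log Z
\]
is a bounded function.

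For finiteness, take \(\eta\) with \(D_{\mathrm{KL}}(\eta\|\rho)<\infty\), so \(\eta\ll\rho\sim\zeta\). The chain rule \(d\eta/d\zeta=(d\eta/d\rho)(d\rho/d\zeta)\) holds \(\eta\)-a.e., so
\[
D_{\mathrm{KL}}(\eta\|\zeta)=D_{\mathrm{KL}}(\eta\|\rho)+\lambda\langle g,\eta\rangle+\log Z .
\]
Splitting the logarithm is legitimate because \(\log(d\rho/d\zeta)\) is bounded and hence \(\eta\)-integrable. This needs some care, since \(\log(d\eta/d\rho)\) is only quasi-integrable a priori; but its negative part is integrable, as always for KL, and the sum with a bounded function is fine. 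So the right-hand side is finite.

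The same computation with \(\eta=\zeta\) gives
\[
D_{\mathrm{KL}}(\zeta\|\rho)=-\lambda\langle g,\zeta\rangle-\log Z .
\]
Subtracting the two identities yields
\[
D_{\mathrm{KL}}(\eta\|\rho)-D_{\mathrm{KL}}(\eta\|\zeta)-D_{\mathrm{KL}}(\zeta\|\rho)=-\lambda\langle g,\eta\rangle-\log Z+\lambda\langle g,\zeta\rangle+\log Z=\lambda\langle g,\zeta-\eta\rangle,
\]
which is the claimed three-point identity.

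Minimality and uniqueness follow from the first identity rewritten as
\[
\lambda\langle g,\eta\rangle+D_{\mathrm{KL}}(\eta\|\rho)=D_{\mathrm{KL}}(\eta\|\zeta)-\log Z
\]
for every \(\eta\) with finite \(D_{\mathrm{KL}}(\eta\|\rho)\). If \(D_{\mathrm{KL}}(\eta\|\rho)=\infty\), the objective is \(+\infty\), because \(\langle g,\eta\rangle\) is finite. By Gibbs' inequality (nonnegativity of KL, with equality iff \(\eta=\zeta\)), the objective is therefore \(\ge -\log Z\), with equality exactly at \(\eta=\zeta\). This gives the unique minimizer and the minimum value \(-\log Z\).

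The only delicate point is the measure-theoretic justification of the chain rule for densities and the splitting of \(\int\log(\cdot)\,d\eta\) when \(\eta\) is merely absolutely continuous. Boundedness of \(g\) is what makes both steps work, so I would state that explicitly and otherwise treat the rest as routine algebra.
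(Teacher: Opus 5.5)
Your proposal is correct and follows essentially the same route as the paper's proof. Both bound the normalizer, use $\log\frac{d\zeta}{d\rho}=-\lambda g-\log Z$ to get $D_{\mathrm{KL}}(\eta\|\zeta)=D_{\mathrm{KL}}(\eta\|\rho)+\lambda\langle g,\eta\rangle+\log Z$, read off minimality and uniqueness from Gibbs' inequality, and obtain the three-point identity by cancellation. The paper computes $\int\log\frac{d\zeta}{d\rho}\,(d\eta-d\zeta)$ directly, which is the same algebra as your subtraction. Your remark that the negative part of $\log(d\eta/d\rho)$ is $\eta$-integrable is a small rigor point the paper leaves implicit.
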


We apply Lemma~\ref{lem:three-point} to each of the two subproblems within a single iteration. Adding the resulting identities, the intermediate term \(\mathrm{KL}(\mu_{k+1}\|\mu_k)\) cancels, while the remaining KL terms provide dissipation. The only additional term is the classical Mirror-Prox mismatch arising from the discrepancy between \(J(\mu_k,\cdot)\) and \(J(\nu_k,\cdot)\). Using Assumption~\ref{ass:W1-lip}, together with Pinsker’s and Young’s inequalities, this term can be absorbed into the KL dissipation, giving the following one-step estimate.

\begin{lemma}\label{lem:one-step-Mirror-Prox}
Suppose Assumption~\ref{ass:W1-lip} holds. 
Let $\lambda>0$ and let \((\mu_k,\nu_k)_{k\ge 0}\) be the KL Mirror-Prox iterates initiated from \(\mu_0\in\mathcal P(\Omega)\). Then, for every \(\eta\in\mathcal P(\Omega)\) such that
\(
D_{\mathrm{KL}}(\eta\|\mu_0)<\infty,
\)
we have, for every \(k\ge 0\),
\[
D_{\mathrm{KL}}(\eta\|\mu_k)<\infty,
\qquad D_{\mathrm{KL}}(\eta\|\nu_k)<\infty,
\]
and
\[
\lambda\langle J(\nu_k,\cdot),\nu_k-\eta\rangle
\le
D_{\mathrm{KL}}(\eta\|\mu_k)
-
D_{\mathrm{KL}}(\eta\|\mu_{k+1})
-
(1-\lambda L_m D_\Omega)
\left[
D_{\mathrm{KL}}(\nu_k\|\mu_k)
+
D_{\mathrm{KL}}(\mu_{k+1}\|\nu_k)
\right].
\]
\end{lemma}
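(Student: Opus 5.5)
We apply Lemma~\ref{lem:three-point} twice per iteration, with the same reference measure \(\rho=\mu_k\), and argue by induction on \(k\).

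\textbf{Finiteness.} By Assumption~\ref{ass:standing}, \(J(\mu_k,\cdot)\) and \(J(\nu_k,\cdot)\) lie in \(C(\Omega)\), hence are bounded. So the normalizing constants in \eqref{eq:KL-MP-Gibbs} are finite and positive, and Lemma~\ref{lem:three-point} applies. Suppose \(D_{\mathrm{KL}}(\eta\|\mu_k)<\infty\).
\begin{itemize}
\item Applying the lemma with \(g=J(\mu_k,\cdot)\) gives \(\zeta=\nu_k\) and \(D_{\mathrm{KL}}(\eta\|\nu_k)<\infty\).
\item Applying it with \(g=J(\nu_k,\cdot)\) gives \(\zeta=\mu_{k+1}\) and \(D_{\mathrm{KL}}(\eta\|\mu_{k+1})<\infty\).
\end{itemize}
Since \(\mu_{k+1}\sim\mu_k\) and \(\nu_k\sim\mu_k\), all the divergences \(D_{\mathrm{KL}}(\mu_{k+1}\|\mu_k)\), \(D_{\mathrm{KL}}(\mu_{k+1}\|\nu_k)\) and \(D_{\mathrm{KL}}(\nu_k\|\mu_k)\) are finite; the densities are bounded above and below by constants. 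Induction from \(\mu_0\) then yields the finiteness claims.

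\textbf{Two identities.} The second step, with comparison measure \(\eta\), gives
\[
\lambda\langle J(\nu_k,\cdot),\mu_{k+1}-\eta\rangle
=
D_{\mathrm{KL}}(\eta\|\mu_k)-D_{\mathrm{KL}}(\eta\|\mu_{k+1})-D_{\mathrm{KL}}(\mu_{k+1}\|\mu_k).
\]
The first step, with comparison measure \(\mu_{k+1}\), gives
\[
\lambda\langle J(\mu_k,\cdot),\nu_k-\mu_{k+1}\rangle
=
D_{\mathrm{KL}}(\mu_{k+1}\|\mu_k)-D_{\mathrm{KL}}(\mu_{k+1}\|\nu_k)-D_{\mathrm{KL}}(\nu_k\|\mu_k).
\]
We add them and write
\[
\langle J(\nu_k,\cdot),\nu_k-\eta\rangle
=
\langle J(\nu_k,\cdot),\mu_{k+1}-\eta\rangle
+\langle J(\mu_k,\cdot),\nu_k-\mu_{k+1}\rangle
+\langle J(\nu_k,\cdot)-J(\mu_k,\cdot),\nu_k-\mu_{k+1}\rangle .
\]
The term \(D_{\mathrm{KL}}(\mu_{k+1}\|\mu_k)\) cancels, leaving
\[
\lambda\langle J(\nu_k,\cdot),\nu_k-\eta\rangle
=
D_{\mathrm{KL}}(\eta\|\mu_k)-D_{\mathrm{KL}}(\eta\|\mu_{k+1})
-\bigl[D_{\mathrm{KL}}(\nu_k\|\mu_k)+D_{\mathrm{KL}}(\mu_{k+1}\|\nu_k)\bigr]
+\lambda\langle J(\nu_k,\cdot)-J(\mu_k,\cdot),\nu_k-\mu_{k+1}\rangle .
\]

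\textbf{Bounding the mismatch term.} This is the main step. First, duality between \(\|\cdot\|_\infty\) and \(\|\cdot\|_1\), together with Assumption~\ref{ass:W1-lip}, gives
\[
\lambda\bigl|\langle J(\nu_k,\cdot)-J(\mu_k,\cdot),\nu_k-\mu_{k+1}\rangle\bigr|
\le
\lambda L_m\, W_1(\nu_k,\mu_k)\,\|\nu_k-\mu_{k+1}\|_1 .
\]
Next, on a compact set, \(W_1(\mu,\nu)\le \tfrac{D_\Omega}{2}\|\mu-\nu\|_1\). To see this, couple the common part \(\mu\wedge\nu\) on the diagonal and the remaining mass of size \(\tfrac12\|\mu-\nu\|_1\) arbitrarily, at cost at most \(D_\Omega\). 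Pinsker's inequality in our normalization reads \(\|\mu-\nu\|_1\le\sqrt{2D_{\mathrm{KL}}(\mu\|\nu)}\), so
\[
\lambda\bigl|\langle J(\nu_k,\cdot)-J(\mu_k,\cdot),\nu_k-\mu_{k+1}\rangle\bigr|
\le
\lambda L_m D_\Omega\sqrt{D_{\mathrm{KL}}(\nu_k\|\mu_k)}\sqrt{D_{\mathrm{KL}}(\mu_{k+1}\|\nu_k)} .
\]
Young's inequality \(\sqrt{ab}\le\tfrac12(a+b)\le a+b\) bounds the right-hand side by
\[
\lambda L_m D_\Omega\bigl[D_{\mathrm{KL}}(\nu_k\|\mu_k)+D_{\mathrm{KL}}(\mu_{k+1}\|\nu_k)\bigr].
\]
This even leaves a factor \(1/2\) to spare, which gives the stated coefficient \(1-\lambda L_m D_\Omega\).

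The step needing most care is matching the constants between \(W_1\), total variation and Pinsker in the paper's normalization \(\|\cdot\|_1=2\,\mathrm{TV}\). Even the crude bound \(W_1\le D_\Omega\|\cdot\|_1\) combined with the \(\tfrac12\) from Young's inequality suffices, so there is slack. Substituting this bound into the identity completes the proof.
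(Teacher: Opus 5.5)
Your proposal is correct and follows the paper's proof essentially verbatim. It uses the same two applications of Lemma~\ref{lem:three-point} centered at \(\mu_k\), with comparison measures \(\eta\) and \(\mu_{k+1}\), the same cancellation of \(D_{\mathrm{KL}}(\mu_{k+1}\|\mu_k)\), and the same bound on the mismatch term via \(\|\cdot\|_\infty\)--\(\|\cdot\|_1\) duality, Assumption~\ref{ass:W1-lip}, a \(W_1\)--total-variation comparison, Pinsker, and Young. The only differences are minor: you use the sharper comparison \(W_1\le \tfrac{D_\Omega}{2}\|\cdot\|_1\), whereas the paper uses \(W_1\le D_\Omega\|\cdot\|_1\) and spends Young's factor \(\tfrac12\) exactly, and you spell out the induction and the finiteness of the intermediate divergences (which rules out \(\infty-\infty\)), both of which the paper leaves implicit.
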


\subsection{Ergodic Minty estimate}
\label{subsec:ergodic-minty}
As discussed in the introduction, under Lasry--Lions monotonicity and appropriate continuity assumptions, the mean field equilibrium condition is equivalent to both the VI and the MVI. This motivates a Minty-based analysis of the averaged iterates as in \cite{nemirovski2004prox}.

Summing the one-step estimate in Lemma~\ref{lem:one-step-Mirror-Prox} over \(k\) yields a telescoping bound. Using Lasry--Lions monotonicity and linearity in the measure argument, we obtain the following Minty residual estimate for the averaged iterates.

\begin{lemma}
\label{lem:ergodic-minty}
Suppose Assumptions~\ref{ass:LL-mono} and~\ref{ass:W1-lip} hold. Let
\begin{equation}
\label{eq:step-size-condition}
0<\lambda<\frac{1}{L_mD_\Omega}
\end{equation}
and let \((\mu_k,\nu_k)_{k\ge 0}\) be the KL Mirror-Prox iterates initialized
from \(\mu_0\in\mathcal P(\Omega)\). Define the averaged measure
\[
\bar\nu_K
:=
\frac1K\sum_{k=0}^{K-1}\nu_k.
\]
Then, for every \(\eta\in\mathcal P(\Omega)\) such that \(D_{\mathrm{KL}}(\eta\|\mu_0)<\infty\),
\[
\langle J(\eta,\cdot),\bar\nu_K-\eta\rangle
\le
\frac{D_{\mathrm{KL}}(\eta\|\mu_0)}{\lambda K}.
\]
\end{lemma}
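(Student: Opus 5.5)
The plan is to sum the one-step estimate of Lemma~\ref{lem:one-step-Mirror-Prox} over \(k=0,\dots,K-1\), use monotonicity to pass from \(J(\nu_k,\cdot)\) to \(J(\eta,\cdot)\), and then use linearity of the pairing in the measure argument to pass to the average.

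Fix \(\eta\in\mathcal P(\Omega)\) with \(D_{\mathrm{KL}}(\eta\|\mu_0)<\infty\). Lemma~\ref{lem:one-step-Mirror-Prox} ensures that \(D_{\mathrm{KL}}(\eta\|\mu_k)\) is finite for every \(k\), so the telescoping below involves only finite quantities. Under the stepsize condition \eqref{eq:step-size-condition} we have \(1-\lambda L_mD_\Omega>0\). Since KL divergences are nonnegative, the bracketed dissipation term can be dropped, giving
\[
\lambda\langle J(\nu_k,\cdot),\nu_k-\eta\rangle
\le
D_{\mathrm{KL}}(\eta\|\mu_k)-D_{\mathrm{KL}}(\eta\|\mu_{k+1}).
\]

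Next apply Assumption~\ref{ass:LL-mono} with the pair \((\nu_k,\eta)\):
\[
\langle J(\nu_k,\cdot)-J(\eta,\cdot),\nu_k-\eta\rangle\ge 0,
\qquad\text{so}\qquad
\langle J(\eta,\cdot),\nu_k-\eta\rangle\le\langle J(\nu_k,\cdot),\nu_k-\eta\rangle.
\]
All pairings are finite because each \(J(\mu,\cdot)\) is continuous, hence bounded, on the compact set \(\Omega\). Combining this with the previous display and summing over \(k=0,\dots,K-1\), the right-hand side telescopes:
\[
\lambda\sum_{k=0}^{K-1}\langle J(\eta,\cdot),\nu_k-\eta\rangle
\le
D_{\mathrm{KL}}(\eta\|\mu_0)-D_{\mathrm{KL}}(\eta\|\mu_K)
\le
D_{\mathrm{KL}}(\eta\|\mu_0).
\]

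Finally, the map \(\xi\mapsto\langle J(\eta,\cdot),\xi\rangle\) is linear, so
\[
\frac1K\sum_{k=0}^{K-1}\langle J(\eta,\cdot),\nu_k-\eta\rangle
=
\langle J(\eta,\cdot),\bar\nu_K-\eta\rangle.
\]
Dividing the summed inequality by \(\lambda K\) gives the claim.

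There is no real obstacle here. The only points needing care are that each term in the telescoping sum is finite, which Lemma~\ref{lem:one-step-Mirror-Prox} supplies, and that the coefficient \(1-\lambda L_mD_\Omega\) is positive, which is exactly why the strict stepsize condition \eqref{eq:step-size-condition} is imposed.
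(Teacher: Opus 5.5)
Your proposal is correct and follows essentially the same argument as the paper: drop the dissipation term using $1-\lambda L_mD_\Omega>0$, telescope the one-step estimate of Lemma~\ref{lem:one-step-Mirror-Prox}, apply Lasry--Lions monotonicity to each pair $(\nu_k,\eta)$, and average using linearity of the pairing. The only difference is cosmetic: the paper sums before invoking monotonicity, whereas you invoke it before summing.
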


Because \(\Omega\) is compact, the sequence of averaged measures is relatively compact under weak convergence. Passing the ergodic estimate to a subsequential limit yields the following restricted Minty inequality.

\begin{corollary}\label{cor:cluster-point-mintyVI}
Suppose Assumptions~\ref{ass:LL-mono} and~\ref{ass:W1-lip} hold.
Let \[0<\lambda<\frac{1}{L_m D_\Omega}\]
and let \((\mu_k,\nu_k)_{k\ge 0}\) be the KL Mirror-Prox iterates initialized
from \(\mu_0\in\mathcal P(\Omega)\). Define
\[
\bar\nu_K
:=
\frac1K\sum_{k=0}^{K-1}\nu_k.
\]
Then there exist a subsequence \((\bar\nu_{K_j})_{j\ge 1}\) and a measure
\(\bar\nu\in\mathcal P(\Omega)\) such that
\[
\bar\nu_{K_j}\Rightarrow \bar\nu
\qquad\text{in }\mathcal P(\Omega).
\]
Moreover, every such subsequential limit satisfies
\[
\langle J(\eta,\cdot),\bar\nu-\eta\rangle\le 0
\qquad
\forall \eta\in\mathcal P(\Omega)
\text{ such that }
D_{\mathrm{KL}}(\eta\|\mu_0)<\infty.
\]

\end{corollary}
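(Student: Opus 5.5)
The plan has two parts: extract a weakly convergent subsequence by compactness, then pass to the limit in the ergodic Minty estimate of Lemma~\ref{lem:ergodic-minty} along that subsequence.

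\emph{Existence of a limit point.} Since \(\Omega\) is compact, \(\mathcal P(\Omega)\) is compact for the weak topology, as noted after Assumption~\ref{ass:standing}. It is also metrizable there, for instance by \(W_1\). Each \(\bar\nu_K\) is a convex combination of the probability measures \(\nu_0,\dots,\nu_{K-1}\), so \(\bar\nu_K\in\mathcal P(\Omega)\). Sequential compactness then yields a subsequence \(\bar\nu_{K_j}\Rightarrow\bar\nu\) with \(\bar\nu\in\mathcal P(\Omega)\).

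\emph{The Minty inequality for the limit.} Fix any subsequential limit \(\bar\nu\), with \(\bar\nu_{K_j}\Rightarrow\bar\nu\), and fix \(\eta\in\mathcal P(\Omega)\) with \(D_{\mathrm{KL}}(\eta\|\mu_0)<\infty\). The step-size condition holds, so Lemma~\ref{lem:ergodic-minty} gives, for each \(j\),
\[
\langle J(\eta,\cdot),\bar\nu_{K_j}\rangle-\langle J(\eta,\cdot),\eta\rangle
\le \frac{D_{\mathrm{KL}}(\eta\|\mu_0)}{\lambda K_j}.
\]
Because \(\eta\) is fixed, the test function \(J(\eta,\cdot)\) is fixed as well, and it lies in \(C(\Omega)\) by Assumption~\ref{ass:standing}. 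It is therefore bounded and continuous on \(\Omega\). By the definition of weak convergence, \(\langle J(\eta,\cdot),\bar\nu_{K_j}\rangle\to\langle J(\eta,\cdot),\bar\nu\rangle\). The right-hand side tends to \(0\) because \(K_j\to\infty\) and the numerator is finite and independent of \(j\). Letting \(j\to\infty\) gives \(\langle J(\eta,\cdot),\bar\nu-\eta\rangle\le0\). Since \(\eta\) was arbitrary in the admissible class, this is the claimed restricted Minty inequality.

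\emph{Where the care is needed.} There is no real obstacle, only one point to keep straight: the cost is evaluated at the fixed measure \(\eta\), not at the moving averages. No continuity of \(\mu\mapsto J(\mu,\cdot)\) is therefore needed in the limit passage, only continuity of \(x\mapsto J(\eta,x)\). The restriction to \(\eta\) with finite KL divergence relative to \(\mu_0\) is inherited directly from Lemma~\ref{lem:ergodic-minty}. Removing it would require a separate density-plus-continuity argument, which the statement does not ask for.
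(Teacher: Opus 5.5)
Your proposal is correct and follows the same route as the paper. Weak compactness of \(\mathcal P(\Omega)\) gives a convergent subsequence, and you then pass to the limit in Lemma~\ref{lem:ergodic-minty}, using only that the fixed test function \(J(\eta,\cdot)\) lies in \(C(\Omega)\). Your explicit appeal to metrizability, which turns compactness into sequential compactness, is a small point the paper leaves implicit.
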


The restriction $D_{\mathrm{KL}}(\eta\|\mu_0)<\infty$ in both Lemma~\ref{lem:ergodic-minty} and Corollary~\ref{cor:cluster-point-mintyVI} reflects the support-preserving nature of the iterations. Indeed, each iterate is an exponential tilt, so the algorithm cannot create mass outside \(\operatorname{supp}\mu_0\). Consequently, the preceding results only test the averaged iterates against measures that are compatible with the support and absolute-continuity structure of the initialization.

This restriction becomes especially transparent when the state space is finite. If \(\Omega=\{x_1,\ldots,x_n\}\), then \[ D_{\mathrm{KL}}(\eta\|\mu_0)<\infty \quad\Longleftrightarrow\quad \operatorname{supp}\eta \subseteq \operatorname{supp}\mu_0. \]  
Hence, in the finite-state setting, every cluster point \(\bar\nu\) of the averaged iterates satisfies \[ \langle J(\eta,\cdot),\bar\nu-\eta\rangle\leq 0 \] for every redistribution \(\eta\) supported on the set initially explored by \(\mu_0\).
In the mean field equilibrium interpretation, this condition rules out profitable redistributions among all populations representable on that support. In particular, if \(\mu_0\) has full support on \(\Omega\), then the Minty inequality holds against every
\(\eta\in\mathcal P(\Omega)\). Subsection~\ref{subsec:eps-MFE} shows that this implies that \(\bar\nu\) is a mean field equilibrium.

Moreover, when \(\Omega\) is finite and \(\mu_0\) has full support on \(\Omega\), \(D_{\mathrm{KL}}(\eta\|\mu_0)\) can be bounded uniformly over \(\eta\in\mathcal P(\Omega)\). Applying the ergodic estimate in Lemma~\ref{lem:ergodic-minty} with this uniform bound yields a Minty residual estimate that quantifies the error after $K$ iterations and is uniform over the entire \(\mathcal P(\Omega)\).

\begin{corollary}
\label{cor:finite-mesh-approximate-MFE}
Suppose Assumptions~\ref{ass:LL-mono} and~\ref{ass:W1-lip} hold.
Let \(\Omega\) be finite and
\(\mu_0\in\mathcal P(\Omega)\) have full support. Define
\[
    m
    :=
    \min_{x\in \Omega}\mu_0(\{x\})>0.
\]
Let
\[
    0<\lambda<\frac{1}{L_mD_\Omega}.
\]
Let \((\mu_k,\nu_k)_{k\geq0}\) be the KL
Mirror-Prox iterates initialized from \(\mu_0\), and define
\[
    \bar\nu_K
    :=
    \frac1K\sum_{k=0}^{K-1}\nu_k.
\]
Then 
\[
    \bigl\langle
        J(\eta,\cdot),
        \eta-\bar\nu_K
    \bigr\rangle
    \geq
    -\varepsilon_K,
    \qquad
    \forall \eta\in\mathcal P(\Omega),
\]
where
\[
    \varepsilon_K
    :=
    \frac{\log(1/m)}{\lambda K}.
\]
\end{corollary}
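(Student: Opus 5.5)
The plan is to combine Lemma~\ref{lem:ergodic-minty} with a uniform bound on $D_{\mathrm{KL}}(\eta\|\mu_0)$ over all $\eta\in\mathcal P(\Omega)$.

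\textbf{Step 1: a uniform KL bound.} Since $\Omega$ is finite and $\mu_0$ has full support, every $\eta\in\mathcal P(\Omega)$ satisfies $\eta\ll\mu_0$. Writing $\eta_x:=\eta(\{x\})$, we have
\[
D_{\mathrm{KL}}(\eta\|\mu_0)
=\sum_{x\in\Omega}\eta_x\log\eta_x-\sum_{x\in\Omega}\eta_x\log\mu_0(\{x\}),
\]
with the convention $0\log 0=0$.

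The first sum is nonpositive, because $\eta_x\in[0,1]$ forces $\log\eta_x\le 0$.

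For the second sum, $\mu_0(\{x\})\ge m$ gives $-\log\mu_0(\{x\})\le\log(1/m)$. Averaging against $\eta$, whose weights sum to one, shows that the second term is at most $\log(1/m)$.

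Hence
\[
D_{\mathrm{KL}}(\eta\|\mu_0)\le\log(1/m)<\infty
\qquad\text{for every }\eta\in\mathcal P(\Omega).
\]

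\textbf{Step 2: apply the ergodic estimate.} By Step 1, every $\eta\in\mathcal P(\Omega)$ meets the hypothesis $D_{\mathrm{KL}}(\eta\|\mu_0)<\infty$ of Lemma~\ref{lem:ergodic-minty}. The stepsize condition \eqref{eq:step-size-condition} is assumed, as are Assumptions~\ref{ass:LL-mono} and~\ref{ass:W1-lip}. The lemma therefore gives
\[
\langle J(\eta,\cdot),\bar\nu_K-\eta\rangle
\le\frac{D_{\mathrm{KL}}(\eta\|\mu_0)}{\lambda K}
\le\frac{\log(1/m)}{\lambda K}=\varepsilon_K .
\]
Multiplying by $-1$ yields $\langle J(\eta,\cdot),\eta-\bar\nu_K\rangle\ge-\varepsilon_K$, which is the claim.

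\textbf{Remarks.} There is no real obstacle here. The only care needed is to confirm that the finite-state setting is a special case of the compact setting: $\Omega$ is compact, and $J(\mu,\cdot)$ is trivially continuous on a finite set, so Lemma~\ref{lem:ergodic-minty} applies directly. One should also handle points with $\eta_x=0$ through the convention $0\log 0=0$ when bounding the entropy term.
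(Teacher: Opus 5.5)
Your proposal is correct and matches the paper's proof essentially verbatim: you bound $D_{\mathrm{KL}}(\eta\|\mu_0)\le\log(1/m)$ using the nonpositivity of $\sum_x\eta_x\log\eta_x$ and the bound $\mu_0(\{x\})\ge m$, and then invoke Lemma~\ref{lem:ergodic-minty} for every $\eta\in\mathcal P(\Omega)$.
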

 
In the finite-state setting, full support of \(\mu_0\) makes every \(\eta\in\mathcal P(\Omega)\) an admissible comparison measure. Consequently, cluster points of the averaged iterates satisfy the full Minty inequality, and the ergodic estimate yields a residual bound that is uniform over \(\mathcal P(\Omega)\).

For a general compact state space, the main obstacle is that the admissible class of comparison measures is restricted by the condition \(D_{\mathrm{KL}}(\eta\|\mu_0)<\infty\). In the next subsection, we remove this restriction through a mesh-refinement argument, extending both the cluster-point convergence result and the uniform Minty residual estimate to the general compact setting.

\subsection{Finite-mesh consistency and mesh refinement} \label{subsec:finite-mesh-approximation}

% \begin{assumption}[Weak-to-uniform continuity]\label{ass:W1-lip}
%     Assume that \(J:\mathcal P(\Omega)\times\Omega\to\mathbb R\) satisfies
%     the following continuity condition:
%     whenever \(\rho_h\Rightarrow \rho\) weakly in \(\mathcal P(\Omega)\),
%     \[
%     \|J(\rho_h,\cdot)-J(\rho,\cdot)\|_\infty\to0.
%     \]
% \end{assumption}

The previous subsection established convergence and residual estimates for the KL Mirror-Prox method on finite state spaces. We now connect these discrete results to the problem on a general compact state space.

We begin with a finite-time consistency result. Proposition~\ref{prop:fixed-K-consistency} shows that, when the discrete initializations approximate a given initialization on \(\Omega\), the finite-support iterates converge to the corresponding iterates of the KL Mirror-Prox method on \(\mathcal P(\Omega)\) at every fixed iteration horizon. Thus, finite meshes provide a consistent approximation of the algorithm itself.

Here, the number of iterations is kept fixed while the mesh is refined.
For each finite subset \(X_h\subseteq\Omega\), we identify \(\mathcal P(X_h)\) with the subset of \(\mathcal P(\Omega)\) consisting of probability measures supported on \(X_h\). Accordingly, all weak convergence statements are understood in the ambient space \(\mathcal P(\Omega)\).

\begin{proposition}
\label{prop:fixed-K-consistency}
Suppose Assumption~\ref{ass:W1-lip} holds. 
Let \(\mu_0\in\mathcal P(\Omega)\). Let
\((X_h)_{h>0}\) be a family of finite subsets of \(\Omega\). Let \(\mu_{0,h}\in\mathcal P(X_h)\) satisfy
\[
\mu_{0,h}\Rightarrow \mu_0
\qquad\text{in }\mathcal P(\Omega)\text{ as }h\to 0.
\]

Let \(\lambda>0\)
and let \((\mu_k,\nu_k)_{k\ge 0}\) and \((\mu_{k,h},\nu_{k,h})_{k\ge 0}\) be the KL Mirror-Prox iterates initialized
from \(\mu_0\) and \(\mu_{0,h}\), respectively.
Then, for every fixed \(k\in\mathbb N\),
\[
\mu_{k,h}\Rightarrow \mu_k,
\qquad
\nu_{k,h}\Rightarrow \nu_k.
\]
Consequently, for every \(K\in\mathbb N\),
\[
\bar\nu_{K,h}
:=
\frac1K\sum_{k=0}^{K-1}\nu_{k,h}
\Rightarrow
\bar\nu_K
:=
\frac1K\sum_{k=0}^{K-1}\nu_k.
\]

\end{proposition}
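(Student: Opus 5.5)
The plan is induction on \(k\), driven by a single stability lemma for the exponential tilt map. For \(\rho\in\mathcal P(\Omega)\) and \(g\in C(\Omega)\), write
\[
T(\rho,g)(dx)
:=
\frac{e^{-\lambda g(x)}\,\rho(dx)}{\int_\Omega e^{-\lambda g}\,d\rho}.
\]
By Lemma~\ref{lem:three-point} and \eqref{eq:KL-MP-Gibbs}, the iterates satisfy
\[
\nu_k=T\bigl(\mu_k,J(\mu_k,\cdot)\bigr),
\qquad
\mu_{k+1}=T\bigl(\nu_k\text{-cost},\,\cdot\bigr)=T\bigl(\mu_k,J(\nu_k,\cdot)\bigr),
\]
and the same formulas hold on \(X_h\). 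The normalizing constants are finite and positive because \(g\) is continuous on the compact set \(\Omega\). The identical formulas apply on \(X_h\) because a tilt of a measure supported on \(X_h\) stays supported on \(X_h\), and it equals the minimizer over \(\mathcal P(\Omega)\).

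\textbf{Stability lemma.} If \(\rho_h\Rightarrow\rho\) and \(\|g_h-g\|_\infty\to0\) with \(g_h,g\in C(\Omega)\), then \(T(\rho_h,g_h)\Rightarrow T(\rho,g)\). To prove this, fix a test function \(f\in C(\Omega)\) and write
\[
\int f\,dT(\rho_h,g_h)
=
\frac{\int f e^{-\lambda g_h}\,d\rho_h}{\int e^{-\lambda g_h}\,d\rho_h}.
\]
Set \(M:=\sup_h\|g_h\|_\infty<\infty\), which is finite since \(g_h\to g\) uniformly and \(g\) is bounded. Then
\[
\|e^{-\lambda g_h}-e^{-\lambda g}\|_\infty
\le
\lambda e^{\lambda M}\|g_h-g\|_\infty\to0.
\]
Hence \(\int f e^{-\lambda g_h}\,d\rho_h-\int f e^{-\lambda g}\,d\rho_h\to0\). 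Next, \(\int f e^{-\lambda g}\,d\rho_h\to\int f e^{-\lambda g}\,d\rho\) by weak convergence, since \(fe^{-\lambda g}\) is continuous. The same argument with \(f\equiv1\) gives convergence of the denominators, whose limit is at least \(e^{-\lambda\|g\|_\infty}>0\). The quotient therefore converges.

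\textbf{Induction.} The base case \(\mu_{0,h}\Rightarrow\mu_0\) is the hypothesis. Suppose \(\mu_{k,h}\Rightarrow\mu_k\). On the compact space \(\Omega\), \(W_1\) metrizes weak convergence, so \(W_1(\mu_{k,h},\mu_k)\to0\). Assumption~\ref{ass:W1-lip} then gives
\[
\|J(\mu_{k,h},\cdot)-J(\mu_k,\cdot)\|_\infty\le L_m W_1(\mu_{k,h},\mu_k)\to0.
\]
The stability lemma yields \(\nu_{k,h}\Rightarrow\nu_k\). Applying Assumption~\ref{ass:W1-lip} again gives \(\|J(\nu_{k,h},\cdot)-J(\nu_k,\cdot)\|_\infty\to0\). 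Using the stability lemma with base measures \(\mu_{k,h}\Rightarrow\mu_k\) then gives \(\mu_{k+1,h}\Rightarrow\mu_{k+1}\).

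\textbf{Averages.} Weak convergence is preserved under finite convex combinations, since integrals against a test function are linear in the measure. Hence \(\bar\nu_{K,h}\Rightarrow\bar\nu_K\) for each fixed \(K\).

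The main obstacle is the stability lemma, specifically handling simultaneous convergence of the measure and the tilting function. This is resolved by the uniform convergence supplied by Assumption~\ref{ass:W1-lip} together with the uniform lower bound on the normalizers. Without uniform (rather than pointwise) convergence of costs, the argument would fail.
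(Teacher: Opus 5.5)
Your proposal is correct and follows essentially the same route as the paper: a stability lemma for the exponential-tilt map under weak convergence of the base measure and uniform convergence of the cost, then induction on $k$ using the $W_1$-Lipschitz assumption (with $W_1$ metrizing weak convergence on compact $\Omega$), then linearity of integrals for the averages. One cosmetic point: $\sup_h\|g_h\|_\infty$ is only guaranteed finite after restricting to sufficiently small $h$, but that is all the argument needs.
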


Approximating the mean field equilibrium requires the iteration count to diverge and the mesh size to vanish. Increasing the iteration count drives the finite-mesh Minty residual to zero, whereas mesh refinement enlarges the class of admissible comparison measures until it becomes dense in \(\mathcal P(\Omega)\). Theorem~\ref{thm:mesh-refinement-MFE} provides a condition coupling these two limits and shows that every subsequential limit \(\bar\nu\) of the ergodic averages solves \eqref{eq:MFE-MVI}. Subsection~\ref{subsec:eps-MFE} then shows that every such \(\bar\nu\) is a mean field equilibrium.

\begin{thm}
\label{thm:mesh-refinement-MFE}
Suppose Assumptions~\ref{ass:LL-mono} and~\ref{ass:W1-lip} hold.
Let
\((X_h)_{h>0}\) be a family of finite subsets of \(\Omega\), and, for each $h>0$, let $Q_h:\Omega\to X_h$ be a measurable mesh projection such that
\[
    \sup_{x\in\Omega} |Q_h(x)-x|\le h.
\]

For each \(h>0\), let \(\mu_{0,h}\in\mathcal P(X_h)\) be an arbitrary
full-support initialization:
\[
\mu_{0,h}(\{x\})>0
\qquad
\forall x\in X_h.
\]
Define its minimal mesh mass by
\[
m_h
:=
\min_{x\in X_h}\mu_{0,h}(\{x\}).
\]
Let \[0<\lambda<\frac{1}{L_m D_\Omega}.\] For each \(h\), let \((\mu_{k,h},\nu_{k,h})_{k\ge 0}\) be the KL Mirror-Prox iterates initialized
from \(\mu_{0,h}\).
Choose \(K_h\to\infty\) such that
\[
\frac{\log(1/m_h)}{K_h}\to 0.
\]
Define
\[
\bar\nu_h
:=
\bar\nu_{K_h,h}
:=
\frac1{K_h}\sum_{k=0}^{K_h-1}\nu_{k,h}.
\]
Then every weak limit point of \((\bar\nu_h)_{h>0}\) is a solution of \eqref{eq:MFE-MVI}. More precisely, if along some sequence
\(h_j\downarrow0\),
\[
\bar\nu_{h_j}\Rightarrow\bar\nu,
\]
then
\[
\langle J(\eta,\cdot),\bar\nu-\eta\rangle\le0,
\qquad
\forall \eta\in\mathcal P(\Omega).
\]
% and $\bar\nu$ is hence a mean field equilibrium.
% Moreover, if it is the unique mean field equilibrium, then the whole sequence converges:
% \[
% \bar\nu_h\Rightarrow\bar\nu.
% \]
\end{thm}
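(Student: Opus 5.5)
The plan is to apply Corollary~\ref{cor:finite-mesh-approximate-MFE} on each mesh \(X_h\) and then pass to the limit using the mesh projection \(Q_h\) to build admissible comparison measures. One preliminary check: Assumptions~\ref{ass:LL-mono} and~\ref{ass:W1-lip} restrict to \(X_h\). Measures on \(X_h\) are elements of \(\mathcal P(\Omega)\), and \(J(\mu,\cdot)\) restricted to \(X_h\) is still continuous. Moreover \(\operatorname{diam}(X_h)\le D_\Omega\), so the step-size condition \(\lambda<1/(L_m D_\Omega)\) remains valid on \(X_h\). The iterates on \(X_h\) coincide with those produced by the finite-state scheme \eqref{eq:finite-KL-MP}. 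Hence, for every \(\eta_h\in\mathcal P(X_h)\),
\[
\langle J(\eta_h,\cdot),\bar\nu_h-\eta_h\rangle\le \frac{\log(1/m_h)}{\lambda K_h}=:\varepsilon_h\to0 .
\]

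Next, I fix an arbitrary \(\eta\in\mathcal P(\Omega)\) and set \(\eta_h:=(Q_h)_\#\eta\in\mathcal P(X_h)\). Coupling \(\eta\) with \(\eta_h\) through \(x\mapsto(x,Q_h(x))\) gives \(W_1(\eta_h,\eta)\le h\). By Assumption~\ref{ass:W1-lip}, this yields \(\|J(\eta_h,\cdot)-J(\eta,\cdot)\|_\infty\le L_m h\). Inserting this into the finite-mesh bound, the error incurred by replacing \(J(\eta_h,\cdot)\) with \(J(\eta,\cdot)\), tested against the difference of two probability measures, is at most \(2L_m h\). This gives
\[
\langle J(\eta,\cdot),\bar\nu_h-\eta_h\rangle\le \varepsilon_h+2L_m h .
\]

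It remains to pass to the limit along \(h_j\downarrow0\). Since \(J(\eta,\cdot)\in C(\Omega)\) by Assumption~\ref{ass:standing}, weak convergence \(\bar\nu_{h_j}\Rightarrow\bar\nu\) gives \(\langle J(\eta,\cdot),\bar\nu_{h_j}\rangle\to\langle J(\eta,\cdot),\bar\nu\rangle\). Also \(\eta_{h_j}\Rightarrow\eta\), because \(W_1\to0\), so \(\langle J(\eta,\cdot),\eta_{h_j}\rangle\to\langle J(\eta,\cdot),\eta\rangle\). Letting \(j\to\infty\) then yields \(\langle J(\eta,\cdot),\bar\nu-\eta\rangle\le0\), which is the claim.

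I expect no serious obstacle here. The delicate point is that Corollary~\ref{cor:finite-mesh-approximate-MFE} may only be tested against measures on \(X_h\) (full support of \(\mu_{0,h}\) covers all of \(\mathcal P(X_h)\) but nothing more). The projection \(Q_h\) together with the uniform \(W_1\)-Lipschitz bound handles this by transferring the comparison measure onto the mesh at cost \(O(h)\). The hypothesis \(\log(1/m_h)/K_h\to0\) is used only to make \(\varepsilon_h\to0\).
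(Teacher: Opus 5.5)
Your proof is correct and follows the paper's argument essentially step by step. You project $\eta$ onto the mesh via $(Q_h)_\#\eta$, use the full-support bound $D_{\mathrm{KL}}(\eta_h\|\mu_{0,h})\le\log(1/m_h)$ in the ergodic Minty estimate on $X_h$, and pass to the limit using Assumption~\ref{ass:W1-lip} and weak convergence. The only cosmetic difference is that you first replace $J(\eta_h,\cdot)$ by $J(\eta,\cdot)$ with an explicit $2L_m h$ error, as the paper later does in Theorem~\ref{thm:quantitative-mesh-approximate-MFE}, whereas the paper passes both limits at once via uniform convergence.
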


Note that Theorem~\ref{thm:mesh-refinement-MFE} does not require the initializations
\(\mu_{0,h}\) to converge weakly to a limiting measure \(\mu_0\). For this result, the initialization enters only through the effective support
and the entropy scale. More precisely, the relevant assumptions are that \(\operatorname{supp}\mu_{0,h}\) becomes dense in \(\Omega\) as $h\to 0$,
and that the entropy error is negligible on the ergodic timescale:
\[
\frac{\log\left(1/\min_{x\in \operatorname{supp}\mu_{0,h}}\mu_{0,h}(\{x\})\right)}{K_h}\to0.
\]
Thus the initialization serves primarily as an exploration measure: its support determines which parts of the state space can be explored, while
its smallest atom controls the KL cost of redistributing mass.

This is a desirable robustness property. In
particular, if the MFE is unique, then any admissible sequence of
initializations with dense effective support and controlled minimal mass leads
to the same limiting equilibrium. If the MFE is not unique, the theorem only
asserts that every weak subsequential limit is an MFE; it does not rule out
initialization-dependent, mesh-dependent, or subsequence-dependent selection
among multiple equilibria.

\begin{rmk}
Theorem~\ref{thm:mesh-refinement-MFE} may be viewed as a constructive proof of
existence of an MFE. 
Existence of a mean field equilibrium is also guaranteed by a standard
Kakutani--Fan--Glicksberg fixed-point argument
under compactness of \(\Omega\), spatial continuity
\(J(\mu,\cdot)\in C(\Omega)\), and weak-to-uniform continuity of \(J\)
in the measure variable. 
\end{rmk}

Theorem~\ref{thm:mesh-refinement-MFE} is qualitative: it identifies the weak limit points obtained as the mesh is refined and the iteration horizon diverges, but it does not quantify the error at finite \(h\) and \(K\). To obtain such a bound, we must control the error incurred when an arbitrary comparison measure on \(\Omega\) is projected onto the mesh. Assumption~\ref{ass:W1-lip} controls the resulting perturbation in the measure argument of \(J\); the following assumption provides the corresponding control in the spatial argument.

\begin{assumption}[Spatial Lipschitz continuity]
\label{ass:spatial-lipschitz}
Assume that \(J\) is uniformly Lipschitz in the spatial variable:
there exists \(L_x>0\) such that
\[
    |J(\rho,x)-J(\rho,y)|
    \le
    L_x |x-y|,
    \qquad
    \forall \rho\in\mathcal P(\Omega),\quad
    \forall x,y\in\Omega.
\]
\end{assumption}

The finite-state ergodic estimate controls the optimization error, while the measure and spatial Lipschitz assumptions control the projection error. Combining these two contributions yields a uniform Minty residual bound on the full space \(\mathcal P(\Omega)\).

\begin{thm}\label{thm:quantitative-mesh-approximate-MFE} 
Suppose Assumptions~\ref{ass:LL-mono},~\ref{ass:W1-lip}, and~\ref{ass:spatial-lipschitz} hold. Let \[ (X_h,Q_h,\mu_{0,h},m_h)_{h>0}, \qquad 0<\lambda<\frac{1}{L_mD_\Omega}, \] and the KL Mirror-Prox iterates \[ (\mu_{k,h},\nu_{k,h})_{k\ge0} \] be as in Theorem~\ref{thm:mesh-refinement-MFE}. For \(h>0\) and \(K\ge1\), define \[ \bar\nu_{K,h} := \frac1K\sum_{k=0}^{K-1}\nu_{k,h}. \] Then \[ \bigl\langle J(\eta,\cdot), \eta-\bar\nu_{K,h} \bigr\rangle \ge -\varepsilon_{K,h}, \qquad \forall\eta\in\mathcal P(\Omega), \] where \[ \varepsilon_{K,h} := \frac{\log(1/m_h)}{\lambda K} +(2L_m+L_x)h. \]
\end{thm}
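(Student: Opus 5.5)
Fix $h>0$, $K\ge1$ and an arbitrary $\eta\in\mathcal P(\Omega)$. We project the comparison measure onto the mesh by setting $\eta_h:=(Q_h)_\#\eta\in\mathcal P(X_h)$. Since $\mu_{0,h}$ has full support on the finite set $X_h$, Corollary~\ref{cor:finite-mesh-approximate-MFE} applies on the state space $X_h$ with minimal mass $m_h$. One should check that its hypotheses transfer to $X_h$. Lasry--Lions monotonicity and Assumption~\ref{ass:W1-lip} restrict to measures on $X_h\subseteq\Omega$, with costs $J(\cdot,\cdot)|_{X_h}$. Moreover $\operatorname{diam}(X_h)\le D_\Omega$, so the stepsize condition $\lambda<1/(L_mD_\Omega)$ still suffices. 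The corollary then gives
\[
\langle J(\eta_h,\cdot),\eta_h-\bar\nu_{K,h}\rangle\ge-\frac{\log(1/m_h)}{\lambda K}.
\]
Here both $\eta_h$ and $\bar\nu_{K,h}$ live on $X_h$, so the pairing is the same whether computed on $X_h$ or on $\Omega$.

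It remains to compare $\langle J(\eta,\cdot),\eta-\bar\nu_{K,h}\rangle$ with the left-hand side above. We decompose the difference as
\[
\langle J(\eta,\cdot),\eta-\bar\nu_{K,h}\rangle-\langle J(\eta_h,\cdot),\eta_h-\bar\nu_{K,h}\rangle
=\langle J(\eta,\cdot)-J(\eta_h,\cdot),\eta_h-\bar\nu_{K,h}\rangle+\langle J(\eta,\cdot),\eta-\eta_h\rangle .
\]
For the first term, Assumption~\ref{ass:W1-lip} together with $W_1(\eta,\eta_h)\le\int|x-Q_h(x)|\,\eta(dx)\le h$ (using the coupling $(\mathrm{id},Q_h)_\#\eta$) gives the bound $\|J(\eta,\cdot)-J(\eta_h,\cdot)\|_\infty\le L_mh$. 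Since $\eta_h-\bar\nu_{K,h}$ is a difference of probability measures, its total mass is at most $2$, so the first term is at least $-2L_mh$. For the second term we write $\langle J(\eta,\cdot),\eta-\eta_h\rangle=\int[J(\eta,x)-J(\eta,Q_h(x))]\,\eta(dx)$. Assumption~\ref{ass:spatial-lipschitz} bounds the integrand by $L_xh$ in absolute value, so this term is at least $-L_xh$. Summing these bounds with the finite-mesh estimate gives exactly $\varepsilon_{K,h}=\log(1/m_h)/(\lambda K)+(2L_m+L_x)h$.

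The main point requiring care is the first step: confirming that Corollary~\ref{cor:finite-mesh-approximate-MFE} is legitimately applied on $X_h$ with the ambient constants. The relevant facts are that $J(\mu,\cdot)$ restricted to $X_h$ is continuous, that all iterates stay in $\mathcal P(X_h)\subseteq\mathcal P(\Omega)$, and that the diameter does not increase. The rest is a direct estimate, and the constant $2L_m$ arises from using the crude bound $\|\eta_h-\bar\nu_{K,h}\|_1\le 2$.
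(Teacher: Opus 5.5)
Your proof is correct and follows the paper's argument: you project $\eta$ to $\eta_h=(Q_h)_\#\eta$, invoke the finite-mesh ergodic Minty bound on $X_h$ (equivalently, Lemma~\ref{lem:ergodic-minty} with $D_{\mathrm{KL}}(\eta_h\|\mu_{0,h})\le\log(1/m_h)$), and control the two projection errors by $2L_mh$ and $L_xh$. The differences are cosmetic. You pair $J(\eta,\cdot)-J(\eta_h,\cdot)$ with $\eta_h-\bar\nu_{K,h}$ rather than $\eta-\bar\nu_{K,h}$, and you apply spatial Lipschitz continuity to $J(\eta,\cdot)$ through the pushforward identity rather than to $J(\eta_h,\cdot)$ through Kantorovich--Rubinstein; your explicit check that the finite-state corollary transfers to $X_h$ with the ambient constants is a useful addition.
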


Corollary~\ref{cor:finite-mesh-approximate-MFE} and Theorem~\ref{thm:quantitative-mesh-approximate-MFE} provide uniform Minty residual bounds over the entire \(\mathcal P(\Omega)\). To interpret this residual as an approximate mean field equilibrium, we next record the precise relations among approximate MFE, variational inequality, and Minty variational inequality conditions.

\subsection{\texorpdfstring{\(\varepsilon\)-MFE}{epsilon-MFE}, \texorpdfstring{\(\varepsilon\)-VI}{epsilon-VI}, and \texorpdfstring{\(\varepsilon\)-MVI}{epsilon-MVI}}\label{subsec:eps-MFE}

The previous subsection yielded two types of Minty guarantees: exact conditions for weak limit points and uniform residual bounds for finite ergodic averages. We now translate both into equilibrium statements. 
To this end, we introduce approximate versions of the MFE, VI, and MVI conditions and establish the implications among them. These notions and relations are closely related to the approximate VI, MVI, and equilibrium framework studied in \cite{bigi2023approximate}; our results extend this connection to the mean field setting. The resulting implications identify the limiting Minty solutions obtained above as MFEs and convert the finite Minty residual bounds into quantitative approximate-MFE guarantees. Figure~\ref{fig:implication-relations} summarizes the exact and approximate implications.

\begin{figure}[t]
\centering
\begin{tikzpicture}[
    >=Latex,
    font=\normalsize,
    every node/.style={align=center},
    box/.style={
        draw,
        rounded corners,
        minimum width=2.5cm,
        minimum height=1.0cm,
        text width=2.5cm,
        inner sep=4pt
    },
    title/.style={
        font=\bfseries
    },
    lab/.style={
        font=\small,
        inner xsep=2pt,
        inner ysep=1pt
    },
    conn/.style={
        thick
    }
]

%---------------- titles ----------------%
\node[title] at (4.8,1.5) {Exact relations};
\node[title] at (4.8,-2.3) {Approximate relations};

%---------------- exact row ----------------%
\node[box] (MFE) at (0,0)    {MFE};
\node[box] (VI)  at (4.8,0)  {VI};
\node[box] (MVI) at (9.6,0)  {MVI};

\draw[conn,<->] (MFE) -- node[lab, above=3pt] {always} (VI);
\draw[conn,->]  (VI)  -- node[lab, above=3pt] {Lasry--Lions\\ monotonicity} (MVI);
\draw[conn,->]  (MVI.south west) to[out=225,in=315]
    node[lab, pos=0.52, below=3pt] {continuity along mixtures}
    (VI.south east);

%---------------- approximate row ----------------%
\node[box] (eMFE) at (0,-3.8)   {\(\varepsilon\)-MFE};
\node[box] (eVI)  at (4.8,-3.8) {\(\varepsilon\)-VI};
\node[box] (eMVI) at (9.6,-3.8) {\(\varepsilon\)-MVI};

\draw[conn,<->] (eMFE) -- node[lab, above=3pt] {always} (eVI);
\draw[conn,->]  (eVI)  -- node[lab, above=3pt] {Lasry--Lions\\ monotonicity} (eMVI);
\draw[conn,->]  (eMVI.south west) to[out=225,in=315]
    node[lab, pos=0.47, above=3pt] {\(C\sqrt{\varepsilon}\)}
    node[lab, pos=0.53, below=3pt] {\(W_1\)-Lipschitz\\
    \(+\) compactness of $\Omega$}
    (eVI.south east);

\end{tikzpicture}
\caption{Relations among MFE, VI, and Minty VI in the exact and approximate settings.}
\label{fig:implication-relations}
\end{figure}

\begin{definition}[\(\varepsilon\)-MFE, \(\varepsilon\)-VI, and \(\varepsilon\)-MVI]
Let \(\varepsilon\ge0\). We say that \(\mu^\varepsilon\in\mathcal P(\Omega)\)
is an \(\varepsilon\)-mean field equilibrium (\(\varepsilon\)-MFE) if
\[
    \int_\Omega J(\mu^\varepsilon,x)\,\mu^\varepsilon(dx)
    \le
    \min_{x\in\Omega}J(\mu^\varepsilon,x)+\varepsilon.
\]
We say that \(\mu^\varepsilon\) satisfies the \(\varepsilon\)-variational
inequality (\(\varepsilon\)-VI) if
\[
    \langle J(\mu^\varepsilon,\cdot),\eta-\mu^\varepsilon\rangle
    \ge
    -\varepsilon,
    \qquad
    \forall \eta\in\mathcal P(\Omega).
\]
We say that \(\mu^\varepsilon\) satisfies the \(\varepsilon\)-Minty
variational inequality (\(\varepsilon\)-MVI) if
\[
    \langle J(\eta,\cdot),\eta-\mu^\varepsilon\rangle
    \ge
    -\varepsilon,
    \qquad
    \forall \eta\in\mathcal P(\Omega).
\]
In particular, when \(\varepsilon=0\), we recover the exact notions: we say that \(\mu\in\mathcal P(\Omega)\) satisfies the variational inequality \emph{(VI)} if \[ \langle J(\mu,\cdot),\eta-\mu\rangle\geq0, \qquad \forall \eta\in\mathcal P(\Omega), \] and the Minty variational inequality \emph{(MVI)} if \[ \langle J(\eta,\cdot),\eta-\mu\rangle\geq0, \qquad \forall \eta\in\mathcal P(\Omega). \]
\end{definition}

\begin{proposition}[\(\varepsilon\)-MFE is equivalent to \(\varepsilon\)-VI]
\label{prop:eps-mfe-equivalent-eps-vi}
Let \(\varepsilon\ge0\) and let \(\mu^\varepsilon\in\mathcal P(\Omega)\).
Then \(\mu^\varepsilon\) is an \(\varepsilon\)-mean field equilibrium if and
only if it satisfies the \(\varepsilon\)-variational inequality
\[
    \langle J(\mu^\varepsilon,\cdot),\eta-\mu^\varepsilon\rangle
    \ge
    -\varepsilon,
    \qquad
    \forall \eta\in\mathcal P(\Omega).
\]
\end{proposition}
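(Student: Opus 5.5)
The proof rests on the observation that, for a fixed measure \(\mu^\varepsilon\), the map \(\eta\mapsto\langle J(\mu^\varepsilon,\cdot),\eta\rangle\) is linear on \(\mathcal P(\Omega)\). Its infimum over \(\mathcal P(\Omega)\) is therefore attained at Dirac masses and equals \(\min_{x\in\Omega}J(\mu^\varepsilon,x)\). Once this identity is in hand, the \(\varepsilon\)-VI reduces to a rewriting of the \(\varepsilon\)-MFE inequality.

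The first step is to note that, by Assumption~\ref{ass:standing}, \(f:=J(\mu^\varepsilon,\cdot)\) is continuous on the compact set \(\Omega\). Hence \(f\) is bounded, and its minimum \(f_{\min}:=\min_{x\in\Omega}f(x)\) is attained at some \(x^*\in\Omega\). It follows that every pairing \(\langle f,\eta\rangle\) is finite.

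The second step establishes
\[
\inf_{\eta\in\mathcal P(\Omega)}\langle f,\eta\rangle=f_{\min}.
\]
For the lower bound, \(f\ge f_{\min}\) pointwise, so integrating against a probability measure \(\eta\) gives \(\langle f,\eta\rangle\ge f_{\min}\). For attainment, take \(\eta=\delta_{x^*}\), which gives \(\langle f,\delta_{x^*}\rangle=f(x^*)=f_{\min}\).

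The third step proves the equivalence. The \(\varepsilon\)-VI states \(\langle f,\eta\rangle\ge\langle f,\mu^\varepsilon\rangle-\varepsilon\) for all \(\eta\in\mathcal P(\Omega)\). Taking the infimum over \(\eta\) shows this is equivalent to
\[
f_{\min}\ge\langle f,\mu^\varepsilon\rangle-\varepsilon,
\]
which is exactly the \(\varepsilon\)-MFE inequality \(\int_\Omega J(\mu^\varepsilon,x)\,\mu^\varepsilon(dx)\le\min_{x\in\Omega}J(\mu^\varepsilon,x)+\varepsilon\).

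For the individual directions:
\begin{itemize}
\item \emph{VI \(\Rightarrow\) MFE}: apply the VI with \(\eta=\delta_{x^*}\).
\item \emph{MFE \(\Rightarrow\) VI}: chain \(\langle f,\eta\rangle\ge f_{\min}\ge\langle f,\mu^\varepsilon\rangle-\varepsilon\).
\end{itemize}

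There is no genuine obstacle here. The only point needing care is the existence of the minimizer \(x^*\), which is what makes the Dirac measure an admissible test measure. This is guaranteed by compactness of \(\Omega\) and continuity of \(J(\mu^\varepsilon,\cdot)\). For \(\varepsilon=0\), the same argument recovers the exact equivalence, since \(\langle f,\mu\rangle=f_{\min}\) with \(f\ge f_{\min}\) forces \(\mu\bigl(\argmin f\bigr)=1\).
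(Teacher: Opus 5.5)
Your proof is correct and follows the paper's argument. For VI \(\Rightarrow\) MFE, the paper tests the \(\varepsilon\)-VI with \(\delta_x\) for every \(x\) and then minimizes over \(x\), which amounts to your single test at a minimizer \(x^*\). For the converse, it uses the same chain \(\langle J(\mu^\varepsilon,\cdot),\eta\rangle\ge\min_{x}J(\mu^\varepsilon,x)\ge\langle J(\mu^\varepsilon,\cdot),\mu^\varepsilon\rangle-\varepsilon\).
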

In particular, Proposition~\ref{prop:eps-mfe-equivalent-eps-vi} includes the exact implication \[ \mathrm{MFE}\Longleftrightarrow\mathrm{VI} \] as the special case \(\varepsilon=0\).

\begin{proposition}[Exact Minty VI implies exact VI]
\label{prop:mvi-implies-vi}
Let \(\mu\in\mathcal P(\Omega)\) satisfy the exact Minty variational inequality \[ \langle J(\eta,\cdot),\eta-\mu\rangle \geq 0, \qquad \forall \eta\in\mathcal P(\Omega). \] 
Assume that, for every \(\eta\in\mathcal P(\Omega)\), \[ \lim_{t\downarrow0} \left\langle J\bigl((1-t)\mu+t\eta,\cdot\bigr), \eta-\mu \right\rangle = \langle J(\mu,\cdot),\eta-\mu\rangle. \] 
Then \(\mu\) satisfies the exact variational inequality \[ \langle J(\mu,\cdot),\eta-\mu\rangle \geq 0, \qquad \forall \eta\in\mathcal P(\Omega). \]
\end{proposition}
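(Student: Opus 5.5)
The plan is the classical Minty trick: test the Minty inequality along the segment joining \(\mu\) to an arbitrary \(\eta\), divide by the segment parameter, and let it tend to zero using the assumed continuity along mixtures.

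Fix \(\eta\in\mathcal P(\Omega)\) and, for \(t\in(0,1]\), set
\[
\eta_t:=(1-t)\mu+t\eta\in\mathcal P(\Omega).
\]
Convexity of \(\mathcal P(\Omega)\) makes \(\eta_t\) an admissible test measure in the exact Minty inequality. Since \(\eta_t-\mu=t(\eta-\mu)\) as finite signed measures, and the pairing is linear in the measure argument, we obtain
\[
0\le \langle J(\eta_t,\cdot),\eta_t-\mu\rangle = t\,\langle J(\eta_t,\cdot),\eta-\mu\rangle .
\]
All pairings are finite because \(J(\eta_t,\cdot)\in C(\Omega)\) is bounded on compact \(\Omega\) by Assumption~\ref{ass:standing}. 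Dividing by \(t>0\) gives
\[
\langle J(\eta_t,\cdot),\eta-\mu\rangle\ge 0 \qquad \text{for every } t\in(0,1].
\]

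Now let \(t\downarrow0\). By the continuity-along-mixtures hypothesis, the left-hand side converges to \(\langle J(\mu,\cdot),\eta-\mu\rangle\). Nonnegativity is preserved in the limit, so
\[
\langle J(\mu,\cdot),\eta-\mu\rangle\ge0 .
\]
Since \(\eta\) was arbitrary, \(\mu\) solves the VI.

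There is essentially no obstacle here: the hypothesis is tailored exactly to the limiting step. The only points needing care are that \(t>0\) before dividing, and that the pairing is well defined and linear in the signed measure \(\eta_t-\mu\).

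Combined with Proposition~\ref{prop:eps-mfe-equivalent-eps-vi} at \(\varepsilon=0\), this shows \(\mu\) is an MFE. I would also remark that Assumption~\ref{ass:W1-lip} implies the mixture-continuity hypothesis, since
\[
W_1(\eta_t,\mu)\le t\,W_1(\eta,\mu)\le tD_\Omega ,
\]
so \(\|J(\eta_t,\cdot)-J(\mu,\cdot)\|_\infty\to0\) as \(t\downarrow0\).
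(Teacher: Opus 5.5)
Your proof is correct and follows the paper's argument essentially verbatim: you test the Minty inequality at the mixture \((1-t)\mu+t\eta\), use \((1-t)\mu+t\eta-\mu=t(\eta-\mu)\) to divide by \(t>0\), and pass to the limit \(t\downarrow0\) using the mixture-continuity hypothesis. Your closing remark that Assumption~\ref{ass:W1-lip} implies this hypothesis also matches the paper's discussion right after the proposition.
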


The continuity assumption in Proposition~\ref{prop:mvi-implies-vi} requires only right continuity of the relevant duality pairing along affine mixtures starting at \(\mu\). In particular, it follows from Assumption~\ref{ass:W1-lip}.
Indeed, for \( \eta_t:=(1-t)\mu+t\eta\), we have 
\[ \begin{aligned} 
\left| \left\langle J(\eta_t,\cdot)-J(\mu,\cdot), \eta-\mu \right\rangle \right| 
&\leq \|J(\eta_t,\cdot)-J(\mu,\cdot)\|_\infty \|\eta-\mu\|_1 \\ 
&\leq L_m W_1(\eta_t,\mu) \cdot 2 \\ 
&\leq 2L_m tW_1(\eta,\mu) \longrightarrow 0 \qquad\text{as }t\downarrow0. \end{aligned} \]

Consequently, in the finite-state, full-support setting of Corollary~\ref{cor:cluster-point-mintyVI} and the general compact setting of Theorem~\ref{thm:mesh-refinement-MFE}, every weak limit point satisfies the VI and hence is a mean field equilibrium.

\begin{proposition}[$\varepsilon$-VI implies $\varepsilon$-MVI]
\label{prop:eps-vi-implies-eps-mvi}
Assume Lasry--Lions monotonicity. Let \(\varepsilon\ge0\), and suppose that
\(\mu^\varepsilon\in\mathcal P(\Omega)\) satisfies the \(\varepsilon\)-VI:
\[
    \langle J(\mu^\varepsilon,\cdot),\eta-\mu^\varepsilon\rangle
    \ge
    -\varepsilon,
    \qquad
    \forall \eta\in\mathcal P(\Omega).
\]
Then \(\mu^\varepsilon\) satisfies the \(\varepsilon\)-Minty VI:
\[
    \langle J(\eta,\cdot),\eta-\mu^\varepsilon\rangle
    \ge
    -\varepsilon,
    \qquad
    \forall \eta\in\mathcal P(\Omega).
\]
\end{proposition}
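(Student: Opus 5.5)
The argument is a one-line decomposition combined with Lasry--Lions monotonicity (Assumption~\ref{ass:LL-mono}). We fix an arbitrary comparison measure \(\eta\in\mathcal P(\Omega)\) and split the Minty pairing by adding and subtracting \(J(\mu^\varepsilon,\cdot)\) in the first slot:
\[
    \langle J(\eta,\cdot),\eta-\mu^\varepsilon\rangle
    =
    \langle J(\eta,\cdot)-J(\mu^\varepsilon,\cdot),\eta-\mu^\varepsilon\rangle
    +
    \langle J(\mu^\varepsilon,\cdot),\eta-\mu^\varepsilon\rangle .
\]
This splitting is legitimate because all pairings are finite. Under Assumption~\ref{ass:standing}, both \(J(\eta,\cdot)\) and \(J(\mu^\varepsilon,\cdot)\) are continuous on the compact set \(\Omega\), hence bounded. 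They are therefore integrable against the finite signed measure \(\eta-\mu^\varepsilon\), and the pairing is bilinear.

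We then bound the two terms separately. The first term is nonnegative by Lasry--Lions monotonicity, applied with \(\mu=\eta\) and \(\nu=\mu^\varepsilon\). The second term is at least \(-\varepsilon\) by the assumed \(\varepsilon\)-VI, tested against the same \(\eta\). Adding the two bounds gives
\[
    \langle J(\eta,\cdot),\eta-\mu^\varepsilon\rangle\ge-\varepsilon .
\]
Since \(\eta\) was arbitrary, \(\mu^\varepsilon\) satisfies the \(\varepsilon\)-MVI.

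There is no real obstacle here. The only point needing care is to confirm that the monotonicity inequality is used in the correct orientation, namely \(\langle J(\eta)-J(\mu^\varepsilon),\eta-\mu^\varepsilon\rangle\ge0\). This is symmetric in the two measures, so the sign is correct. The case \(\varepsilon=0\) recovers the exact implication \(\mathrm{VI}\Rightarrow\mathrm{MVI}\) stated in the introduction.
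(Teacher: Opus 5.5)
Your proof is correct and takes the same approach as the paper: both add and subtract \(J(\mu^\varepsilon,\cdot)\), bound the monotonicity pairing below by \(0\), and bound the \(\varepsilon\)-VI term below by \(-\varepsilon\). Your remark that the pairings are finite, via Assumption~\ref{ass:standing}, is a harmless addition that the paper leaves implicit.
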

In particular, Proposition~\ref{prop:eps-vi-implies-eps-mvi} includes the exact implication \[ \mathrm{VI}\Longrightarrow\mathrm{MVI} \] as the special case \(\varepsilon=0\).

\begin{proposition}[Approximate MVI implies approximate VI]
\label{prop:eps-mvi-implies-delta-vi}
Suppose Assumption~\ref{ass:W1-lip} holds. Let \(\varepsilon>0\), and let
\(\mu^\varepsilon\in\mathcal P(\Omega)\) satisfy the \(\varepsilon\)-MVI:
\[
    \langle J(\eta,\cdot),\eta-\mu^\varepsilon\rangle
    \ge
    -\varepsilon,
    \qquad
    \forall \eta\in\mathcal P(\Omega).
\]
Assume in addition that
\[
    \varepsilon\le 2L_mD_\Omega.
\]
Then \(\mu^\varepsilon\) satisfies the \(\delta\)-VI:
\[
    \langle J(\mu^\varepsilon,\cdot),\eta-\mu^\varepsilon\rangle
    \ge
    -\delta,
    \qquad
    \forall \eta\in\mathcal P(\Omega),
\]
with
\[
    \delta=2\sqrt{2L_mD_\Omega\,\varepsilon}.
\]
\end{proposition}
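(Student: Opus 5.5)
Fix an arbitrary \(\eta\in\mathcal P(\Omega)\). The plan is the standard Minty-to-Stampacchia argument along the segment \(\eta_t:=(1-t)\mu^\varepsilon+t\eta\), \(t\in(0,1]\), with the continuity defect quantified through Assumption~\ref{ass:W1-lip}, and then an optimization over \(t\).

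\emph{Step 1 (Minty at \(\eta_t\)).} Apply the \(\varepsilon\)-MVI with comparison measure \(\eta_t\). Since \(\eta_t-\mu^\varepsilon=t(\eta-\mu^\varepsilon)\), we get
\[
t\,\langle J(\eta_t,\cdot),\eta-\mu^\varepsilon\rangle\ge-\varepsilon,
\qquad\text{so}\qquad
\langle J(\eta_t,\cdot),\eta-\mu^\varepsilon\rangle\ge-\frac{\varepsilon}{t}.
\]

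\emph{Step 2 (perturbation in the measure argument).} As in the computation following Proposition~\ref{prop:mvi-implies-vi}, we have
\[
\bigl|\langle J(\eta_t,\cdot)-J(\mu^\varepsilon,\cdot),\eta-\mu^\varepsilon\rangle\bigr|
\le\|J(\eta_t,\cdot)-J(\mu^\varepsilon,\cdot)\|_\infty\,\|\eta-\mu^\varepsilon\|_1
\le 2L_m W_1(\eta_t,\mu^\varepsilon).
\]
The mixture \(\eta_t-\mu^\varepsilon=t(\eta-\mu^\varepsilon)\) gives \(W_1(\eta_t,\mu^\varepsilon)\le tW_1(\eta,\mu^\varepsilon)\), for instance via Kantorovich--Rubinstein duality. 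On compact \(\Omega\) we also have \(W_1(\eta,\mu^\varepsilon)\le D_\Omega\). Hence the perturbation is at most \(2L_mD_\Omega t\). Combining with Step 1,
\[
\langle J(\mu^\varepsilon,\cdot),\eta-\mu^\varepsilon\rangle\ge-\frac{\varepsilon}{t}-2L_mD_\Omega\,t
\qquad\forall t\in(0,1].
\]

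\emph{Step 3 (optimize in \(t\)).} The right-hand side is maximized at \(t^*=\sqrt{\varepsilon/(2L_mD_\Omega)}\), where it equals \(-2\sqrt{2L_mD_\Omega\varepsilon}=-\delta\). The hypothesis \(\varepsilon\le 2L_mD_\Omega\) is exactly what guarantees \(t^*\in(0,1]\), so \(\eta_{t^*}\) is a genuine probability measure and a legitimate test measure in Step 1. Since \(\eta\) was arbitrary, this gives the \(\delta\)-VI.

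The only delicate point is the constant in Step 2, namely the bound \(\|\eta-\mu^\varepsilon\|_1\le 2\) together with the linear scaling of \(W_1\) along mixtures. I would verify the latter by writing \(\langle f,\eta_t-\mu^\varepsilon\rangle=t\langle f,\eta-\mu^\varepsilon\rangle\) for 1-Lipschitz \(f\) and taking the supremum. Everything else is routine.
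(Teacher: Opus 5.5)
Your proof is correct and follows essentially the same route as the paper's. You test the \(\varepsilon\)-MVI at the mixture with \(t_0=\sqrt{\varepsilon/(2L_mD_\Omega)}\), bound the perturbation by \(2L_mW_1\) with \(W_1\) scaling linearly along mixtures, and use \(\varepsilon\le 2L_mD_\Omega\) to ensure \(t_0\le 1\). The paper phrases the same computation as a contradiction from a strict violation of the \(\delta\)-VI, whereas you derive the bound directly by choosing \(t\) optimally.
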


We now translate the MVI guarantees in Corollary~\ref{cor:finite-mesh-approximate-MFE} and Theorem~\ref{thm:quantitative-mesh-approximate-MFE} into MFE guarantees. The key conclusion is that, for every prescribed tolerance \(\delta>0\), one can choose the mesh size \(h\) sufficiently small and the iteration count \(K\) sufficiently large so that the proposed KL Mirror-Prox algorithm produces a \(\delta\)-MFE.

For the finite-state problem, Corollary~\ref{cor:finite-mesh-approximate-MFE}
shows that the
ergodic average \(\bar{\nu}_K\) satisfies an \(\varepsilon_K\)-MVI for the finite-state problem, where
\[
\varepsilon_K
=
\frac{\log(1/m)}{\lambda K}.
\]
Whenever $\varepsilon_K\le 2L_mD_\Omega$,
Propositions~\ref{prop:eps-mvi-implies-delta-vi}
and~\ref{prop:eps-mfe-equivalent-eps-vi} imply that $\bar{\nu}_K$ satisfies a
$\delta_K$-VI and hence is a
$\delta_K$-MFE, with
\[
\delta_K
=
2\sqrt{2L_mD_\Omega\,\varepsilon_K}
=
2\sqrt{
\frac{2L_mD_\Omega\log(1/m)}
{\lambda K}
}.
\]
Since $m$ is fixed, this yields
\[
\delta_K=O(K^{-1/2}),
\]
and hence $\delta_K\to 0$ as $K\to\infty$. 
In particular, for any $\delta>0$, let
\[
\varepsilon_\delta^\star
:=
\min\left\{
2L_mD_\Omega,\,
\frac{\delta^2}{8L_mD_\Omega}
\right\}.
\]
Then any
\[
K
\ge
\frac{\log(1/m)}
{\lambda\varepsilon_\delta^\star}
\]
ensures that $\bar{\nu}_K$ is a $\delta$-MFE. Thus, on a fixed finite
state space, the algorithm computes arbitrarily accurate approximate MFEs.

For the general compact-state problem,
Theorem~\ref{thm:quantitative-mesh-approximate-MFE} shows that the
ergodic average $\bar{\nu}_{K,h}$ satisfies an
$\varepsilon_{K,h}$-MVI for the general compact state space problem, where
\[
\varepsilon_{K,h}
=
\frac{\log(1/m_h)}{\lambda K}
+
(2L_m+L_x)h.
\]
Whenever $\varepsilon_{K,h}\le 2L_mD_\Omega$, Propositions~\ref{prop:eps-mvi-implies-delta-vi}
and~\ref{prop:eps-mfe-equivalent-eps-vi} imply
that $\bar{\nu}_{K,h}$ satisfies a
$\delta_{K,h}$-VI and hence is a $\delta_{K,h}$-MFE, with
\[
\delta_{K,h}
=
2\sqrt{2L_mD_\Omega\,\varepsilon_{K,h}}
=
O\left(
\sqrt{
h+\frac{\log(1/m_h)}{K}
}
\right).
\]
Given any $\delta>0$, let
\[
\varepsilon_\delta^\star
:=
\min\left\{
2L_mD_\Omega,\,
\frac{\delta^2}{8L_mD_\Omega}
\right\}.
\]
It is sufficient to choose
\[
h
\le
\frac{\varepsilon_\delta^\star}
{2(2L_m+L_x)}
\]
and, after fixing the mesh and initialization (and hence \(m_h\)), choose
\[
K
\ge
\frac{2\log(1/m_h)}
{\lambda\varepsilon_\delta^\star}.
\]
Then $\varepsilon_{K,h}\le\varepsilon_\delta^\star$ and consequently
$\delta_{K,h}\le\delta$. Therefore, for every $\delta>0$, a sufficiently
fine mesh followed by sufficiently many KL Mirror-Prox iterations produces
a $\delta$-MFE of the original compact-state problem. In this sense, the
proposed algorithm computes MFEs to arbitrary accuracy.

\section{Metric Convergence under Strong Lasry--Lions Monotonicity}\label{sec:strong-LL-mono}

The preceding section provides convergence results in terms of a global Minty residual and an approximate equilibrium certificate. These estimates measure how closely the ergodic averages satisfy the variational inequality, but they do not directly control the distance between the iterates and an equilibrium.
We now impose a strong form of Lasry--Lions monotonicity, under which such metric estimates become available.

\begin{assumption}[Strong Lasry--Lions monotonicity]\label{ass:strong-LL-mono}
There exists
\(\alpha>0\) such that, for all \(\rho,\sigma\in\mathcal P(\Omega)\),
\[
    \langle J(\rho,\cdot)-J(\sigma,\cdot),\rho-\sigma\rangle
    \ge
    \alpha W_1(\rho,\sigma)^2.
\]
\end{assumption}
The strong monotonicity condition has two immediate consequences. First, it guarantees uniqueness of the mean field equilibrium. Second, the monotonicity pairing directly controls a metric error, allowing the one-step KL estimate to yield convergence rates for both individual and ergodic iterates rather than only qualitative convergence of ergodic averages.

Under this stronger assumption, we first study the algorithm on a fixed finite mesh and establish eventual sublinear rates of last-iterate convergence to the unique finite-mesh VI solution and, when that solution has full support, an eventual geometric rate. We then impose spatial Lipschitz continuity of the cost and derive an estimate comparing the finite-mesh VI solution with the solution of the original problem. Finally, we combine the optimization and discretization estimates to obtain explicit finite-iteration bounds for approximating the mean field equilibrium.

First, we consider the problem on a fixed finite mesh. Applying Lemma~\ref{lem:one-step-Mirror-Prox} with the finite-mesh VI solution as the comparison measure and using strong monotonicity yields a Fej\'er-type descent inequality, from which convergence of the full sequence to the unique solution follows. The resulting last-iterate rates then rely on finite-dimensional equivalences between KL divergence, total variation, and \(W_1\).

\begin{thm}
\label{thm:finite-mesh-last-iterate}
Suppose Assumptions~\ref{ass:W1-lip} and~\ref{ass:strong-LL-mono} hold.
Let
\(
    X_h=\{x_1,\dots,x_{N_h}\}\subset\Omega
\)
with $N_h\ge2$,
and let
\[
    \delta_h:=\min_{i\neq j}|x_i-x_j|.
\]
Let
\[
    0<\lambda<\frac{1}{L_mD_\Omega}.
\]
Then the finite-mesh VI admits a unique solution \(\mu_h^\ast\in\mathcal P(X_h)\),
characterized by
\[
    \langle J(\mu_h^\ast,\cdot),\eta_h-\mu_h^\ast\rangle\ge0,
    \qquad
    \forall \eta_h\in\mathcal P(X_h).
\]
Let
\((\mu_{k,h},\nu_{k,h})_{k\ge0}\) be the finite-mesh KL Mirror-Prox iterates initialized from \(\mu_{0,h}\), where \(\mu_{0,h}\) has full support on \(X_h\).
Then
\[
    W_1(\mu_{k,h},\mu_h^\ast)\to0,
    \qquad
    W_1(\nu_{k,h},\mu_h^\ast)\to0.
\]

Define
\[
    D_{k,h}:=D_{\rm KL}(\mu_h^\ast\|\mu_{k,h}),
    \qquad
    m_h^\ast:=\min_{x\in\supp\mu_h^\ast}\mu_h^\ast(x),
\]
and
\[
    \beta:=
    \frac12
    \min\left\{
        \lambda\alpha,
        \frac{1-\lambda L_mD_\Omega}{2D_\Omega^2}
    \right\},\qquad
    \gamma_h
    :=
    \frac14
    \min\left\{
        \frac{\lambda\alpha\delta_h^2}{2},
        1-\lambda L_mD_\Omega
    \right\}.
\]
Then the following estimates hold.

\begin{enumerate}
\item
For every \(k\ge0\),
\begin{equation}\label{eq:finite-mesh-W1-descent}
    \beta
    W_1(\mu_{k,h},\mu_h^\ast)^2
    \le
    D_{k,h}-D_{k+1,h}.
\end{equation}

\item
There exists \(k_{0,h}\in\mathbb N\) satisfying
\[
    k_{0,h}
    \le
    \left\lceil
        \frac{256D_{0,h}}{\gamma_h (m_h^\ast)^4}
    \right\rceil
\]
such that, for every \(k\ge k_{0,h}\),
\[
    D_{k,h}
    \le
    \frac{1}{
        \frac{8}{(m_h^\ast)^2}
        +
        \frac{\gamma_h}{4}(k-k_{0,h})
    },
\]
and
\[
    W_1(\mu_{k,h},\mu_h^\ast)
    \le
    D_\Omega
    \sqrt{
        \frac{2}{
            \frac{8}{(m_h^\ast)^2}
            +
            \frac{\gamma_h}{4}(k-k_{0,h})
        }
    }.
\]

\item
If, in addition, \(\mu_h^\ast\) has full support on \(X_h\), then there exists
\(k_{0,h}\in\mathbb N\) satisfying
\[
    k_{0,h}
    \le
    \left\lceil
        \frac{8D_{0,h}}{\gamma_h (m_h^\ast)^3}
    \right\rceil
\]
such that, for every \(k\ge k_{0,h}\),
\[
    D_{k,h}
    \le
    \frac{(m_h^\ast)^2}{8}
    \left(1-\gamma_h m_h^\ast\right)^{k-k_{0,h}},
\]
and
\[
    W_1(\mu_{k,h},\mu_h^\ast)
    \le
    D_\Omega
    \frac{m_h^\ast}{2}
    \left(1-\gamma_h m_h^\ast\right)^{(k-k_{0,h})/2}.
\]
\end{enumerate}
Finally,
\[
D_{0,h}
\le
-\log\left(
\min_{x\in\supp\mu_h^\ast}\mu_{0,h}(x)
\right)
\le
-\log\left(
\min_{x\in X_h}\mu_{0,h}(x)
\right).
\]
\end{thm}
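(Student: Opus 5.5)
The plan is to derive every claim from Lemma~\ref{lem:one-step-Mirror-Prox}, applied on the finite state space $X_h$ with comparison measure $\eta=\mu_h^\ast$. On $X_h$ the full support of $\mu_{0,h}$ makes every comparison measure admissible.

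\textbf{Existence and uniqueness of $\mu_h^\ast$.} Existence follows from a standard fixed-point argument on the compact convex set $\mathcal P(X_h)\simeq\Delta_{N_h}$. The map $\mu\mapsto J(\mu,\cdot)|_{X_h}$ is continuous by Assumption~\ref{ass:W1-lip}, so one may apply Brouwer to $p\mapsto \Pi_{\Delta}(p-J(p))$ (or Kakutani to the best-response correspondence). Uniqueness holds because two solutions $\mu,\mu'$ give, after adding their VIs, $0\ge\langle J(\mu)-J(\mu'),\mu-\mu'\rangle\ge\alpha W_1(\mu,\mu')^2$.

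\textbf{Descent inequality \eqref{eq:finite-mesh-W1-descent}.} With $\eta=\mu_h^\ast$, strong monotonicity followed by the VI at $\mu_h^\ast$ gives
\[
\langle J(\nu_{k,h}),\nu_{k,h}-\mu_h^\ast\rangle\ge \langle J(\mu_h^\ast),\nu_{k,h}-\mu_h^\ast\rangle+\alpha W_1(\nu_{k,h},\mu_h^\ast)^2\ge \alpha W_1(\nu_{k,h},\mu_h^\ast)^2 .
\]
Hence
\[
D_{k,h}-D_{k+1,h}\ge \lambda\alpha W_1(\nu_{k,h},\mu_h^\ast)^2+(1-\lambda L_mD_\Omega)\,D_{\rm KL}(\nu_{k,h}\|\mu_{k,h}).
\]
I then combine two estimates.
\begin{itemize}
\item Pinsker together with $W_1\le \tfrac{D_\Omega}{2}\|\cdot\|_1$ gives $D_{\rm KL}(\nu_{k,h}\|\mu_{k,h})\ge 2W_1(\nu_{k,h},\mu_{k,h})^2/D_\Omega^2$.
\item The triangle inequality gives $W_1(\mu_{k,h},\mu_h^\ast)^2\le 2W_1(\nu_{k,h},\mu_h^\ast)^2+2W_1(\nu_{k,h},\mu_{k,h})^2$.
\end{itemize}
Together these yield the inequality with the stated $\beta$. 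Summing shows $\sum_k W_1(\mu_{k,h},\mu_h^\ast)^2\le D_{0,h}/\beta$ and $\sum_k D_{\rm KL}(\nu_{k,h}\|\mu_{k,h})<\infty$. Therefore $W_1(\mu_{k,h},\mu_h^\ast)\to0$, and also $W_1(\nu_{k,h},\mu_{k,h})\to0$, so $\nu_{k,h}\to\mu_h^\ast$. The final bound on $D_{0,h}$ follows from $\mu_h^\ast\le1$ pointwise: $\sum_{\supp\mu_h^\ast}\mu_h^\ast\log(\mu_h^\ast/\mu_{0,h})\le\sum\mu_h^\ast\log(1/\mu_{0,h})$.

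\textbf{Rates (parts 2 and 3).} On a finite mesh, $W_1\ge\tfrac{\delta_h}{2}\|\cdot\|_1$. The same descent bound, written in $\|\cdot\|_1$ and with the stated constant $\gamma_h$, therefore gives $D_{k,h}-D_{k+1,h}\ge c\,\gamma_h\|\mu_{k,h}-\mu_h^\ast\|_1^2$, where I use Pinsker for $\nu\leftrightarrow\mu$ and the constant $\tfrac14$ absorbs the triangle splitting.

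What remains, and what I expect to be the main obstacle, is a local \emph{reverse Pinsker} bound for $D_{k,h}$. It holds once $\mu_{k,h}\ge\tfrac12\mu_h^\ast$ on $\supp\mu_h^\ast$, which is guaranteed when $\|\mu_{k,h}-\mu_h^\ast\|_1\le m_h^\ast/2$. By Pinsker, this in turn is ensured once $D_{k,h}\le (m_h^\ast)^2/8$, and it then persists because $D_{k,h}$ is nonincreasing. In that regime, write $\varepsilon=\mu_{k,h}(X_h\setminus\supp\mu_h^\ast)$ and use $\log(1+u)\le u$.
\begin{itemize}
\item In general, $D_{k,h}\lesssim \varepsilon+\sum_{\supp}(\mu_h^\ast-\mu_{k,h})^2/\mu_{k,h}\lesssim \|\cdot\|_1/m_h^\ast$. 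This gives $D_{k+1,h}\le D_{k,h}-c'\gamma_h (m_h^\ast)^2D_{k,h}^2$, and the standard reciprocal argument $1/D_{k+1}\ge1/D_k+c'\gamma_h (m^\ast_h)^2$ produces the $1/k$ bound starting from $1/D_{k_0}\ge 8/(m_h^\ast)^2$.
\item If $\supp\mu_h^\ast=X_h$, then $\varepsilon=0$ and the quadratic bound $D_{k,h}\lesssim\|\cdot\|_1^2/m_h^\ast$ gives the contraction $D_{k+1,h}\le(1-\gamma_h m_h^\ast)D_{k,h}$.
\end{itemize}
The $W_1$ bounds then follow from $W_1\le\tfrac{D_\Omega}{2}\|\cdot\|_1\le \tfrac{D_\Omega}{2}\sqrt{2D_{k,h}}$.

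\textbf{Bounding $k_{0,h}$.} I estimate the entry time into the region $D_{k,h}\le(m_h^\ast)^2/8$ by a counting argument. Outside that region, the reverse Pinsker inequality fails only if $\|\mu_{k,h}-\mu_h^\ast\|_1$ is bounded below by a power of $m_h^\ast$. So every such step decreases $D$ by at least $\gamma_h$ times a power of $m_h^\ast$ (the fourth power in the general case, the third in the full-support case). The number of such steps is then at most $D_{0,h}$ divided by this decrement, which gives the stated ceilings. Matching the exact powers and constants $256$ and $8$ is the delicate bookkeeping I would verify last.
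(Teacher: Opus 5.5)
Most of your plan coincides with the paper's proof. The shared steps are: Lemma~\ref{lem:one-step-Mirror-Prox} with $\eta=\mu_h^\ast$; strong monotonicity plus the VI at $\mu_h^\ast$; Pinsker and the triangle inequality for \eqref{eq:finite-mesh-W1-descent}; then $W_1\ge\frac{\delta_h}{2}\|\cdot\|_1$ to get $\gamma_h\|\mu_{k,h}-\mu_h^\ast\|_1^2\le D_{k,h}-D_{k+1,h}$ (your constant $c$ is exactly $1$); a local reverse Pinsker bound; and a counting argument for $k_{0,h}$, which is the contrapositive of the paper's pigeonhole step. Your sharper $W_1\le\frac{D_\Omega}{2}\|\cdot\|_1$ only improves constants. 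Your Brouwer existence argument differs from the paper's, which obtains $\mu_h^\ast$ as a cluster point of the algorithm via Corollary~\ref{cor:cluster-point-mintyVI} and Proposition~\ref{prop:mvi-implies-vi}, but it is fine.

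The gap is in the general-support reverse Pinsker step. You bound $D_{k,h}\lesssim\|\mu_{k,h}-\mu_h^\ast\|_1/m_h^\ast$, which loses a full factor of $m_h^\ast$. Your recursion $D_{k+1,h}\le D_{k,h}-c'\gamma_h(m_h^\ast)^2D_{k,h}^2$ then gives slope $c'\gamma_h(m_h^\ast)^2$, not the claimed $\gamma_h/4$. The same loss affects the entry time: from $D_{k,h}>(m_h^\ast)^2/8$ your estimate only forces $\|\mu_{k,h}-\mu_h^\ast\|_1\gtrsim(m_h^\ast)^3$, i.e.\ an $(m_h^\ast)^{-6}$ entry bound. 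So the fourth power you assert does not follow from what you wrote.

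The missing point is to use $r:=\|\mu_{k,h}-\mu_h^\ast\|_1\le m_h^\ast/2$ a second time. Besides giving $\mu_{k,h}\ge m_h^\ast/2$ on $\supp\mu_h^\ast$, it absorbs the quadratic term, $\frac{2}{m_h^\ast}r^2\le r$. Hence $D_{k,h}\le\varepsilon+r\le 2r$, with no dependence on $m_h^\ast$ (item~1 of Lemma~\ref{lem:local-reverse-kl-finite-grid}). Then $\gamma_h r^2\ge\frac{\gamma_h}{4}D_{k,h}^2$ gives exactly $1/D_{k+1,h}\ge 1/D_{k,h}+\gamma_h/4$. Also, $D_{k,h}>(m_h^\ast)^2/8$ forces $r>(m_h^\ast)^2/16$, so each step before entry decreases $D$ by more than $\gamma_h(m_h^\ast)^4/256$, which yields the stated ceiling.

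A similar constant issue arises in the full-support case. The contraction factor is exactly $1-\gamma_h m_h^\ast$ only if $D_{k,h}\le r^2/m_h^\ast$ with constant one. This requires $\sum_i a_i^2\le\frac12\bigl(\sum_i|a_i|\bigr)^2$ for the mean-zero vector $a=\mu_h^\ast-\mu_{k,h}$. The cruder $\sum_i a_i^2\le\bigl(\sum_i|a_i|\bigr)^2$ gives only $1-\gamma_h m_h^\ast/2$.
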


\begin{rmk}
The full-support assumption on \(\mu_h^\ast\) can be replaced by other conditions that yield eventual geometric convergence. One such alternative is a suitable active-set gap, which ensures that mass outside the support of \(\mu_h^\ast\) is suppressed at a geometric rate.
Concretely, suppose that there exist a neighborhood \(\mathcal U_h\) of \(\mu_h^\ast\)
and a constant \(\Delta_h>0\) such that, for every \(\rho\in\mathcal U_h\),
\[
    J(\rho,x)
    \ge
    J(\rho,y)+\Delta_h,
    \qquad
    \forall x\notin \supp(\mu_h^\ast), \quad \forall y\in \supp(\mu_h^\ast).
\]
Then the exponential tilt suppresses the inactive mass by a fixed factor
once the iterates enter \(\mathcal U_h\). Such a condition can restore
geometric convergence even when \(\mu_h^\ast\) does not have full support on \(X_h\).

Without such a gap, geometric convergence cannot be expected. Consider $X=\{0,1\}$. For
\[
    \mu=(1-p)\delta_0+p\delta_1,
    \qquad p\in[0,1],
\]
define
\[
    J(\mu,0)=0,
    \qquad
    J(\mu,1)=p.
\]
Then the unique equilibrium is
\[
    \mu^\ast=\delta_0.
\]
The operator is strongly monotone in \(L^1\), since for
\[
    \mu=(1-p)\delta_0+p\delta_1,
    \qquad
    \nu=(1-q)\delta_0+q\delta_1,
\]
one has
\[
    \langle J(\mu,\cdot)-J(\nu,\cdot),\mu-\nu\rangle
    =
    (p-q)^2
    =
    \frac14\|\mu-\nu\|_1^2.
\]
However, there is no active-set gap at equilibrium, because
\[
    J(\mu^\ast,1)-J(\mu^\ast,0)=0.
\]
For the KL Mirror-Prox iteration, if
\[
    \mu_k=(1-p_k)\delta_0+p_k\delta_1,
\]
then
\[
    q_k
    :=
    \nu_k(\{1\})
    =
    \frac{p_k e^{-\lambda p_k}}
    {1-p_k+p_k e^{-\lambda p_k}},
\]
and
\[
    p_{k+1}
    =
    \frac{p_k e^{-\lambda q_k}}
    {1-p_k+p_k e^{-\lambda q_k}}.
\]
As \(p_k\downarrow0\),
\[
    q_k=p_k-\lambda p_k^2+O(p_k^3),
\]
and therefore
\[
    p_{k+1}
    =
    p_k-\lambda p_k^2+O(p_k^3).
\]
Thus
\[
    p_k\sim \frac1{\lambda k}.
\]
Consequently,
\[
    D_{\rm KL}(\mu^\ast\|\mu_k)
    =
    -\log(1-p_k)
    \sim
    p_k
    \sim
    \frac1{\lambda k}.
\]
This example satisfies strong monotonicity but has no active-set gap, and it
exhibits \(O(1/k)\) KL decay rather than geometric convergence.
\end{rmk}

Under the additional spatial Lipschitz condition in Assumption~\ref{ass:spatial-lipschitz}, Proposition~\ref{prop:direct-mesh-stability} compares the solutions of the continuous and discrete variational inequalities via the projection \((Q_h)_\#\mu^\ast\). This estimate is independent of the algorithm and separates discretization error from optimization error.

\begin{proposition}
\label{prop:direct-mesh-stability}
Suppose Assumptions~\ref{ass:strong-LL-mono} and~\ref{ass:spatial-lipschitz} hold.
Let $X_h$ be a finite subset of $\Omega$ and $Q_h:\Omega\to X_h$ be a measurable mesh projection
satisfying
\[
    \sup_{x\in\Omega}
    \lvert Q_h(x)-x\rvert
    \leq h.
\]
Let \(\mu^\ast\in\mathcal P(\Omega)\)
solve the continuous VI
\[
    \langle J(\mu^\ast,\cdot),\eta-\mu^\ast\rangle\ge0,
    \qquad
    \forall \eta\in\mathcal P(\Omega),
\]
and let \(\mu_h^\ast\in\mathcal P(X_h)\) solve the finite-mesh VI
\[
    \langle J(\mu_h^\ast,\cdot),\eta_h-\mu_h^\ast\rangle\ge0,
    \qquad
    \forall \eta_h\in\mathcal P(X_h).
\]
Then
\[
    W_1(\mu_h^\ast,\mu^\ast)
    \le
    \sqrt{\frac{L_x}{\alpha}h}.
\]
\end{proposition}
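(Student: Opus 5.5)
We compare \(\mu_h^\ast\) with \(\mu^\ast\) by testing each variational inequality against a suitable comparison measure and adding the results. Strong Lasry--Lions monotonicity then turns the sum into control of \(\alpha W_1(\mu_h^\ast,\mu^\ast)^2\). The natural comparison measure for the discrete VI is the pushforward \((Q_h)_\#\mu^\ast\in\mathcal P(X_h)\). The continuous VI can be tested directly at \(\mu_h^\ast\), since \(\mathcal P(X_h)\subset\mathcal P(\Omega)\).

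\emph{Step 1: the two VIs.} Taking \(\eta=\mu_h^\ast\) in the continuous VI gives
\[
\langle J(\mu^\ast,\cdot),\mu_h^\ast-\mu^\ast\rangle\ge0 .
\]
Taking \(\eta_h=(Q_h)_\#\mu^\ast\) in the discrete VI gives
\[
\langle J(\mu_h^\ast,\cdot),(Q_h)_\#\mu^\ast-\mu_h^\ast\rangle\ge0 .
\]

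\emph{Step 2: combine with strong monotonicity.} By Assumption~\ref{ass:strong-LL-mono},
\[
\alpha W_1(\mu_h^\ast,\mu^\ast)^2\le \langle J(\mu_h^\ast,\cdot)-J(\mu^\ast,\cdot),\mu_h^\ast-\mu^\ast\rangle .
\]
The term \(-\langle J(\mu^\ast,\cdot),\mu_h^\ast-\mu^\ast\rangle\) is \(\le0\) by the continuous VI. The term \(\langle J(\mu_h^\ast,\cdot),\mu_h^\ast-\mu^\ast\rangle\) is split as
\[
\langle J(\mu_h^\ast,\cdot),\mu_h^\ast-(Q_h)_\#\mu^\ast\rangle+\langle J(\mu_h^\ast,\cdot),(Q_h)_\#\mu^\ast-\mu^\ast\rangle .
\]
Its first piece is \(\le0\) by the discrete VI. Hence
\[
\alpha W_1(\mu_h^\ast,\mu^\ast)^2\le\langle J(\mu_h^\ast,\cdot),(Q_h)_\#\mu^\ast-\mu^\ast\rangle .
\]

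\emph{Step 3: projection error.} By the change of variables for pushforwards,
\[
\langle J(\mu_h^\ast,\cdot),(Q_h)_\#\mu^\ast-\mu^\ast\rangle=\int_\Omega\bigl(J(\mu_h^\ast,Q_h(x))-J(\mu_h^\ast,x)\bigr)\,\mu^\ast(dx).
\]
By Assumption~\ref{ass:spatial-lipschitz} and \(|Q_h(x)-x|\le h\), the integrand is at most \(L_x h\) in absolute value, so the pairing is at most \(L_xh\). This gives \(\alpha W_1(\mu_h^\ast,\mu^\ast)^2\le L_xh\), which is the claim after taking square roots.

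The argument is short, and the main care point is the choice of comparison measure. A naive choice, such as testing the discrete VI against \(\mu^\ast\) itself, is not admissible because \(\mu^\ast\) need not be supported on \(X_h\). Using \((Q_h)_\#\mu^\ast\) makes only the spatial Lipschitz constant enter, with no \(L_m\) term, and uses the fact that \(J(\mu_h^\ast,\cdot)\) is evaluated at a fixed measure. Measurability of \(Q_h\) ensures the pushforward is a well-defined element of \(\mathcal P(X_h)\).
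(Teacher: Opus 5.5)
Your proposal is correct and follows the paper's proof essentially step for step: you test the continuous VI at $\mu_h^\ast$ and the discrete VI at $(Q_h)_\#\mu^\ast$, and combine them with strong monotonicity. The only cosmetic difference is that you bound the projection term by direct change of variables, whereas the paper uses Kantorovich--Rubinstein duality together with $W_1((Q_h)_\#\mu^\ast,\mu^\ast)\le h$; the two give the same estimate $L_x h$.
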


Combining Theorem~\ref{thm:finite-mesh-last-iterate} and Proposition~\ref{prop:direct-mesh-stability} yields the last-iterate bounds. Best-iterate bounds follow from summing the Fejér inequality, while ergodic bounds use convexity of \(W_1\).

\begin{thm}
\label{thm:finite-iteration-continuous-error}
Assume the hypotheses of Theorem~\ref{thm:finite-mesh-last-iterate} and Proposition~\ref{prop:direct-mesh-stability}, and let
\(\mu^\ast\in\mathcal P(\Omega)\) be the continuous equilibrium. For
\(K\ge1\), define
\[
    \bar\mu_{K,h}:=\frac1K\sum_{k=0}^{K-1}\mu_{k,h}.
\]
Then the following estimates hold.

\begin{enumerate}
\item
Let \(k_{0,h}\) be the index from item \(2\) of
Theorem~\ref{thm:finite-mesh-last-iterate}. Then, for every
\(k\ge k_{0,h}\),
\[
    W_1(\mu_{k,h},\mu^\ast)
    \le
    D_\Omega
    \sqrt{
        \frac{2}{
            \frac{8}{(m_h^\ast)^2}
            +
            \frac{\gamma_h}{4}(k-k_{0,h})
        }
    }
    +
    \sqrt{\frac{L_x}{\alpha}h}.
\]
If, in addition, \(\mu_h^\ast\) has full support on \(X_h\), and if
\(k_{0,h}\) is the index from item \(3\) of
Theorem~\ref{thm:finite-mesh-last-iterate}, then, for every
\(k\ge k_{0,h}\),
\[
    W_1(\mu_{k,h},\mu^\ast)
    \le
    D_\Omega
    \frac{m_h^\ast}{2}
    \left(1-\gamma_h m_h^\ast\right)^{(k-k_{0,h})/2}
    +
    \sqrt{\frac{L_x}{\alpha}h}.
\]

\item
For every \(K\ge1\),
\[
    \min_{0\le k\le K-1}
    W_1(\mu_{k,h},\mu^\ast)
    \le
    \sqrt{
        \frac{D_{0,h}}{
            \beta K
        }
    }
    +
    \sqrt{\frac{L_x}{\alpha}h}.
\]
Consequently, if
\[
    h\le \frac{\alpha\varepsilon^2}{4L_x}
\]
and
\[
    K\ge
    \frac{4D_{0,h}}{
        \varepsilon^2 \beta
    },
\]
then there exists \(0\le k\le K-1\) such that
\[
    W_1(\mu_{k,h},\mu^\ast)\le\varepsilon.
\]

\item
For every \(K\ge1\),
\[
    W_1(\bar\mu_{K,h},\mu^\ast)
    \le
    \sqrt{
        \frac{D_{0,h}}{
            \beta K
        }
    }
    +
    \sqrt{\frac{L_x}{\alpha}h}.
\]
Consequently, if
\[
    h\le \frac{\alpha\varepsilon^2}{4L_x}
\]
and
\[
    K\ge
    \frac{4D_{0,h}}{
        \varepsilon^2 \beta
    },
\]
then
\[
    W_1(\bar\mu_{K,h},\mu^\ast)\le\varepsilon.
\]
\end{enumerate}

\end{thm}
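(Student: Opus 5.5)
The proof splits each bound into an optimization error and a discretization error via the triangle inequality for \(W_1\):
\[
W_1(\cdot,\mu^\ast)\le W_1(\cdot,\mu_h^\ast)+W_1(\mu_h^\ast,\mu^\ast).
\]
The second term is controlled uniformly by Proposition~\ref{prop:direct-mesh-stability}, which gives \(\sqrt{L_xh/\alpha}\). It remains to bound the first term in each of the three regimes using Theorem~\ref{thm:finite-mesh-last-iterate}. For item~1 this is immediate: the last-iterate estimates in items~2 and~3 of Theorem~\ref{thm:finite-mesh-last-iterate} hold for \(k\ge k_{0,h}\) with the respective indices, and adding the mesh term gives both displayed bounds.

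For item~2, sum the Fejér inequality \eqref{eq:finite-mesh-W1-descent} over \(k=0,\dots,K-1\). The right-hand side telescopes to \(D_{0,h}-D_{K,h}\le D_{0,h}\), since KL is nonnegative. This yields
\[
\beta\sum_{k<K}W_1(\mu_{k,h},\mu_h^\ast)^2\le D_{0,h}.
\]
The minimum of the squares is at most their average, so \(\min_k W_1(\mu_{k,h},\mu_h^\ast)\le\sqrt{D_{0,h}/(\beta K)}\). Add the mesh term to obtain the best-iterate bound.

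For item~3, use joint convexity of \(W_1\) in its arguments. Fixing the second argument \(\mu_h^\ast\), the map \(\rho\mapsto W_1(\rho,\mu_h^\ast)\) is convex: mixing optimal couplings gives a coupling of the mixture. Hence
\[
W_1(\bar\mu_{K,h},\mu_h^\ast)\le\frac1K\sum_k W_1(\mu_{k,h},\mu_h^\ast)\le\Bigl(\frac1K\sum_k W_1(\mu_{k,h},\mu_h^\ast)^2\Bigr)^{1/2},
\]
where the second step is Cauchy--Schwarz. The same telescoped sum then gives \(\sqrt{D_{0,h}/(\beta K)}\).

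The sufficient conditions are routine substitutions. The condition \(h\le\alpha\varepsilon^2/(4L_x)\) makes \(\sqrt{L_xh/\alpha}\le\varepsilon/2\). The condition \(K\ge 4D_{0,h}/(\varepsilon^2\beta)\) makes \(\sqrt{D_{0,h}/(\beta K)}\le\varepsilon/2\). Summing the two halves gives \(\varepsilon\).

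There is no real obstacle, since all the heavy lifting is in the cited results. The only points to verify are that \(\mu^\ast\) exists and solves the continuous VI, so that Proposition~\ref{prop:direct-mesh-stability} applies, and that \(\beta>0\) under the step-size condition.
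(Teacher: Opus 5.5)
Your proposal is correct and follows the paper's proof essentially step for step. Both arguments use the triangle inequality with Proposition~\ref{prop:direct-mesh-stability} for the mesh term, the last-iterate estimates of Theorem~\ref{thm:finite-mesh-last-iterate} for item~1, telescoping \eqref{eq:finite-mesh-W1-descent} for the best iterate, and convexity of \(W_1\) followed by Cauchy--Schwarz for the averaged iterate, with the same \(\varepsilon/2+\varepsilon/2\) split for the sufficient conditions.
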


Theorem~\ref{thm:finite-iteration-continuous-error} gives three types of
bounds: a last-iterate bound, a best-iterate bound, and an averaged-iterate
bound. Each estimate decomposes the total error into an algorithmic error and
the mesh-approximation error \(W_1(\mu_h^\ast,\mu^\ast)\).

The last-iterate estimate is a genuine a priori estimate, since the entry
index \(k_{0,h}\) is bounded explicitly in Theorem~\ref{thm:finite-mesh-last-iterate}. However, this entry bound can
deteriorate substantially as the mesh is refined. 
For a quasi-uniform mesh in dimension \(d\), one typically has
\[
    \delta_h\asymp h,
    \qquad
    \gamma_h\asymp \delta_h^2\asymp h^2,
    \qquad
    N_h:=|X_h|\asymp h^{-d}.
\]
If, in addition,
\[
    m_h^\ast\asymp h^d,
    \qquad
    D_{0,h}=O(d|\log h|),
\]
as may occur for suitably regular full-support equilibria and a uniform
initialization, then the entry bounds scale as
\[
    k_{0,h}
    \lesssim 
    \frac{D_{0,h}}{\gamma_h(m_h^\ast)^4}
    \asymp
    h^{-(4d+2)}d|\log h|
\]
in the general-support case, and
\[
    k_{0,h}
    \lesssim
    \frac{D_{0,h}}{\gamma_h(m_h^\ast)^3}
    \asymp
    h^{-(3d+2)}d|\log h|
\]
in the full-support case. Thus explicit mesh-dependent complexity for the last-iterate estimate can be quite
conservative.

By contrast, the best-iterate and averaged-iterate estimates are cleaner a priori bounds. They use the direct
\(W_1\)-descent estimate \eqref{eq:finite-mesh-W1-descent} and do not use the
local reverse-KL region and therefore do not contain the mesh-separation factor
\(\delta_h\). Thus, they provide a more useful practical guide for choosing the mesh size \(h\) and the number of iterations \(K\) before running
the algorithm.

Indeed, these estimates suggest balancing the algorithmic error
\[
    \sqrt{\frac{D_{0,h}}{\beta K}}
\]
against the mesh approximation error
\[
    \sqrt{\frac{L_x}{\alpha}h}.
\]
For example, suppose that the initialization is uniform,
\(D_\Omega=O(\sqrt d)\), the problem constants are dimension-independent,
and \(\lambda\) is chosen as a fixed fraction of
\(1/(L_mD_\Omega)\). Then
\[
    D_{0,h}=O(d|\log h|),
    \qquad
    \beta^{-1}=O(d),
\]
choosing
\[
    h\asymp \varepsilon^2,
    \qquad
    K\asymp d^2\varepsilon^{-2}|\log\varepsilon|
\]
gives an \(O(\varepsilon)\) guarantee for the averaged iterate and guarantees
that at least one of the first \(K\) iterates is \(O(\varepsilon)\)-accurate.

Thus, under the scaling assumptions above, the iteration count for the
best-iterate and averaged-iterate guarantees depends on the dimension only
through multiplicative polynomial factors; the dimension does not enter the
exponent of \(\varepsilon^{-1}\). This indicates a mild dimension dependence
at the level of iteration complexity. However, note that on a quasi-uniform grid, \(N_h\asymp h^{-d}\), so the storage and per-iteration
cost may still scale like a negative power of \(\varepsilon\) whose exponent
depends on \(d\).

\section{Equilibrium Selection with Relative Entropy}\label{sec:MFE-selection}
In this section, we consider equilibrium selection on a general compact state space and study how a KL-type Tikhonov penalty can select a distinguished mean field equilibrium. More precisely, we choose a reference measure \(\rho\in\mathcal P(\Omega)\) with full support and consider equilibria \(\mu\) satisfying \(D_{\mathrm{KL}}(\mu\|\rho)<\infty\). When this minimum is attained uniquely, the vanishing-penalty limit selects the equilibrium that minimizes relative entropy with respect to \(\rho\) over the MFE solution set.

Let \(\rho\in\mathcal P(\Omega)\) be a reference measure with full support.
For each finite mesh \(X_h\subseteq\Omega\) and measurable mesh projection $Q_h:\Omega\to X_h$ such that
\[
    \sup_{x\in\Omega} |Q_h(x)-x|\le h,
\]
define
\[
    \rho_h:=Q_{h\#}\rho\in\mathcal P(X_h).
\]
We assume, for simplicity, that \(\rho_h(x)>0\) for every \(x\in X_h\). This entails no loss of generality. Indeed, if some mesh point has zero \(\rho_h\)-mass, one may replace \(X_h\) by \(\operatorname{supp}(\rho_h)\) and modify \(Q_h\) on a \(\rho\)-null set. The resulting measurable map still satisfies
\[
\sup_{x\in\Omega} |Q_h(x)-x|\le h
\]
and has the same pushforward measure \(\rho_h\).

Define the full-space and discrete entropy functionals by
\[
    R(\mu):=D_{\rm KL}(\mu\|\rho),
    \qquad
    R_h(\mu_h):=D_{\rm KL}(\mu_h\|\rho_h).
\]

For each fixed \(h\) and \(\varepsilon>0\), the regularized VI below admits a
unique solution in the relative interior of \(\mathcal P(X_h)\). Existence
follows by applying Brouwer's fixed-point theorem to the associated continuous
Gibbs map. Uniqueness follows from Lasry--Lions monotonicity of \(J\) and
strict monotonicity of the first variation of relative entropy.

\begin{thm}
\label{thm:joint-mesh-tikhonov-selection}
Suppose Assumptions~\ref{ass:LL-mono},~\ref{ass:W1-lip}, and~\ref{ass:spatial-lipschitz} hold.
Let
\[
    \mathcal S
    :=
    \left\{
    \mu\in\mathcal P(\Omega):
    \langle J(\mu,\cdot),\eta-\mu\rangle\ge0
    \quad \forall \eta\in\mathcal P(\Omega)
    \right\}
\]
be the equilibrium set. Assume that
\[
    \mu^\dagger
    =
    \operatorname*{argmin}_{\mu\in\mathcal S}
    D_{\rm KL}(\mu\|\rho)
\]
exists, is unique, and satisfies
\[
    D_{\rm KL}(\mu^\dagger\|\rho)<\infty.
\]

For each \(h\) and \(\varepsilon_h>0\), let \(\mu_h^{\varepsilon_h}\in\mathcal P(X_h)\)
solve the regularized VI:
\[
    \left\langle
        J(\mu_h^{\varepsilon_h},\cdot)
        +
        \varepsilon_h
        \log\frac{\mu_h^{\varepsilon_h}}{\rho_h},
        \eta_h-\mu_h^{\varepsilon_h}
    \right\rangle
    \ge0,
    \qquad
    \forall \eta_h\in\mathcal P(X_h).
\]
Assume the balance conditions
\[
    \frac{h}{\varepsilon_h}\to0,\qquad
    \varepsilon_h \log\frac1{\rho_{\min,h}}\to0,
\]
as $h\to 0$, where \(\rho_{\min,h}:=\min_{x\in X_h}\rho_h(x)\).
Then
\[
    \mu_h^{\varepsilon_h}\Rightarrow \mu^\dagger.
\]
\end{thm}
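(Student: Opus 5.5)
We follow the standard Tikhonov selection argument for monotone VIs, with two modifications: the continuous equilibrium set is compared to the mesh through the projection \(Q_{h\#}\), and an entropy inequality replaces the Hilbert-space norm. Write \(\mu_h:=\mu_h^{\varepsilon_h}\). By compactness of \(\mathcal P(\Omega)\) it suffices to show that every weak limit point \(\bar\mu\) of \((\mu_h)\) equals \(\mu^\dagger\). Fix a sequence \(h_j\downarrow0\) with \(\mu_{h_j}\Rightarrow\bar\mu\); all limits below are taken along it.

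\textbf{Step 1: \(\bar\mu\in\mathcal S\).} Since \(\mu_h\) has full support on \(X_h\), the regularized VI holds with equality against all \(\eta_h\in\mathcal P(X_h)\). For \(\eta\in\mathcal P(\Omega)\) we take \(\eta_h:=Q_{h\#}\eta\). The penalty term is controlled by
\[
\varepsilon_h\Bigl\langle \log\tfrac{\mu_h}{\rho_h},\eta_h-\mu_h\Bigr\rangle
\le \varepsilon_h\Bigl\langle \log\tfrac{\mu_h}{\rho_h},\eta_h\Bigr\rangle
\le \varepsilon_h\log\tfrac{1}{\rho_{\min,h}} ,
\]
using \(\mu_h\le1\) and \(-R_h(\mu_h)\le 0\). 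This gives \(\langle J(\mu_h,\cdot),\eta_h-\mu_h\rangle\ge -\varepsilon_h\log(1/\rho_{\min,h})\to0\). Next, \(\|J(\mu_h,\cdot)-J(\bar\mu,\cdot)\|_\infty\to0\) by Assumption~\ref{ass:W1-lip}, and \(W_1(\eta_h,\eta)\le h\) together with Assumption~\ref{ass:spatial-lipschitz} lets us pass to the limit. We obtain the VI at \(\bar\mu\), so \(\bar\mu\in\mathcal S\).

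\textbf{Step 2: an entropy comparison with \(\mu^\dagger\).} Let \(\mu^\dagger_h:=Q_{h\#}\mu^\dagger\). Data processing gives \(R_h(\mu_h^\dagger)\le R(\mu^\dagger)<\infty\). The plan is to test the regularized VI with \(\mu_h^\dagger\) and add the VI for \(\mu^\dagger\) tested against a suitable measure, which reduces to a monotonicity pairing. We use
\[
\langle J(\mu_h,\cdot),\mu_h^\dagger-\mu_h\rangle \le \langle J(\mu_h,\cdot),\mu^\dagger-\mu_h\rangle + L_xh
\]
and, with \(\mu_h\) as competitor, \(\langle J(\mu^\dagger,\cdot),\mu_h-\mu^\dagger\rangle\ge0\). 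Lasry--Lions monotonicity then gives \(\langle J(\mu_h,\cdot),\mu^\dagger-\mu_h\rangle\le0\). Inserting this into the regularized VI yields
\[
\varepsilon_h\Bigl\langle \log\tfrac{\mu_h}{\rho_h},\mu_h-\mu_h^\dagger\Bigr\rangle\le L_xh .
\]
The three-point identity for relative entropy rewrites the left-hand side as \(\varepsilon_h\bigl[R_h(\mu_h)-R_h(\mu_h^\dagger)+D_{\rm KL}(\mu_h^\dagger\|\mu_h)\bigr]\). Dividing by \(\varepsilon_h\) gives
\[
R_h(\mu_h)\le R_h(\mu_h^\dagger)+L_xh/\varepsilon_h\le R(\mu^\dagger)+o(1),
\]
and this is exactly where the condition \(h/\varepsilon_h\to0\) enters.

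\textbf{Step 3: lower semicontinuity and conclusion.} We need \(R(\bar\mu)\le\liminf R_h(\mu_h)\). This is the step I expect to be the main obstacle, because \(R_h\) is taken relative to \(\rho_h\) rather than \(\rho\). I would use the joint weak lower semicontinuity of \((\mu,\nu)\mapsto D_{\rm KL}(\mu\|\nu)\) on \(\mathcal P(\Omega)\), via its Donsker--Varadhan dual representation. Since \(\mu_h\Rightarrow\bar\mu\) and \(\rho_h\Rightarrow\rho\) (because \(W_1(\rho_h,\rho)\le h\)), this gives \(R(\bar\mu)\le\liminf R_h(\mu_h)\le R(\mu^\dagger)\). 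Since \(\bar\mu\in\mathcal S\) by Step 1 and \(\mu^\dagger\) is the unique minimizer of \(R\) over \(\mathcal S\), we conclude \(\bar\mu=\mu^\dagger\). Every subsequential limit therefore coincides, and the whole family converges. Two remaining points need care: that \(\mu_h\) has full support (established in the paper's existence discussion), and the sign bookkeeping in the three-point identity, which must be checked.
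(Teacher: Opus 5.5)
Your proof is correct. It has the same three-step architecture as the paper:
\begin{enumerate}
\item a $\limsup$ bound on $R_h(\mu_h^{\varepsilon_h})$, obtained by testing against $Q_{h\#}\mu^\dagger$;
\item membership of limit points in $\mathcal S$, obtained by testing against $Q_{h\#}\eta$ with the bound $\log(1/\rho_{\min,h})$;
\item joint lower semicontinuity of $D_{\rm KL}$ together with uniqueness of $\mu^\dagger$.
\end{enumerate}

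Your entropy-comparison step, however, is organized differently and is leaner.
\begin{itemize}
\item The paper tests against $\pi_h=Q_{h\#}\mu^\dagger$ and applies monotonicity between $\mu_h^{\varepsilon_h}$ and $\pi_h$. It then needs Lemma~\ref{lem:projected-equilibrium-approx-discrete}, which says $\pi_h$ solves the discrete VI up to $(2L_m+L_x)h$ and uses Assumption~\ref{ass:W1-lip}. It also invokes the full convergence $R_h(\pi_h)\to R(\mu^\dagger)$ from Lemma~\ref{lem:entropy-consistency-mesh-projection}.
\item You instead use $\mu_h^{\varepsilon_h}\in\mathcal P(X_h)\subset\mathcal P(\Omega)$ as a competitor in the continuous VI for $\mu^\dagger$. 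Lasry--Lions monotonicity on $\mathcal P(\Omega)$ then gives $\langle J(\mu_h^{\varepsilon_h},\cdot),\mu^\dagger-\mu_h^{\varepsilon_h}\rangle\le0$ directly. The mesh enters only through the spatial-Lipschitz transfer from $\mu^\dagger$ to $Q_{h\#}\mu^\dagger$.
\item Your route gives the sharper error $L_xh$ instead of $(2L_m+L_x)h$. It avoids the auxiliary lemma and the $W_1$-Lipschitz assumption in this step; that assumption is still needed for the limit passage in your Step 1. It also uses only the data-processing half of the entropy-consistency lemma.
\end{itemize}

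Replacing Lemma~\ref{lem:composite-euler-equivalence} by the exact three-point identity is the same mechanism. The convexity inequality used there is your identity with the nonnegative remainder $D_{\rm KL}(Q_{h\#}\mu^\dagger\|\mu_h^{\varepsilon_h})$ dropped, and your sign bookkeeping is right.

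You also do not need to import full support of $\mu_h^{\varepsilon_h}$ from elsewhere. The regularized VI can only hold if $\mu_h^{\varepsilon_h}>0$ on $X_h$: testing with $\eta_h=\delta_x$ at a point where $\mu_h^{\varepsilon_h}(x)=0$ would make the pairing $-\infty$.
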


\begin{rmk}
For a quasi-uniform \(d\)-dimensional mesh and a reference density
bounded above and below, one typically has
\[
    \rho_{\min,h}\asymp h^d,
\]
so $\varepsilon_h=\sqrt h$ is one admissible choice satisfying
\[
    \frac{h}{\varepsilon_h}\to0,\qquad
    \varepsilon_h\log\frac1{\rho_{\min,h}}\to0.
\]
\end{rmk}

%\begin{rmk}
%\label{rmk:approximate-regularized-vi}
The exact solution assumption in Theorem~\ref{thm:joint-mesh-tikhonov-selection}
can be weakened. It is
sufficient that, for each \(h\),
\(\mu_h^{\varepsilon_h}\in\mathcal P(X_h)\) satisfy the discrete regularized
VI up to an error \(r_h\geq 0\), namely
\[
    \left\langle
        J(\mu_h^{\varepsilon_h},\cdot)
        +
        \varepsilon_h
        \log\frac{\mu_h^{\varepsilon_h}}{\rho_h},
        \eta_h-\mu_h^{\varepsilon_h}
    \right\rangle
    \ge
    -r_h,
    \qquad
    \forall \eta_h\in\mathcal P(X_h).
\]
If
\[
    \frac{r_h}{\varepsilon_h}\to0,
\]
then the conclusion of
Theorem~\ref{thm:joint-mesh-tikhonov-selection} remains unchanged:
\[
    \mu_h^{\varepsilon_h}\Rightarrow\mu^\dagger.
\]

Such a sequence can be constructed by running the proposed KL
Mirror-Prox algorithm on the regularized cost
\[
    F_h^{\varepsilon_h}(\mu,\cdot)
    :=
    J(\mu,\cdot)
    +
    \varepsilon_h\log\frac{\mu}{\rho_h}.
\]

Indeed, suppose that \(J\) is bounded on
\(\mathcal P(X_h)\times X_h\), that \(\rho_h\) and the initial iterate
\(\mu_{0,h}\) have full support, and that
\(0<\lambda_h\varepsilon_h<1\). For each fixed \(h\), boundedness of \(J\) and this choice of \(\lambda_h\)
imply a uniform bound on
the oscillation of \(\log(\mu_{k,h}/\rho_h)\) and \(\log(\nu_{k,h}/\rho_h)\) along the trajectories. Consequently, there is a constant \(a_h>0\) such that every coordinate of every predictor and corrector is at least \(a_h\). Since \(X_h\)
is finite, \(F_h^{\varepsilon_h}\) is
\(W_1\)-to-\(L^\infty\) Lipschitz, with some constant \(\widetilde L_h\), on
the compact set of probability vectors whose coordinates are at least
\(a_h/2\). Choose \(\lambda_h\) to satisfy the corresponding trajectory-local
step-size condition.

Repeating the proof of Lemma~\ref{lem:ergodic-minty} for interior comparison
measures gives a restricted Minty residual \(s_{h,K}=O(K^{-1})\) for the
ergodic average \(\bar\nu_{K,h}\). For arbitrary
\(\eta_h\in\mathcal P(X_h)\) and \(t\le1/2\), the mixture
\[
\rho_{h,t}:=(1-t)\bar\nu_{K,h}+t\eta_h
\]
has every coordinate at least \(a_h/2\). Thus the proof of
Proposition~\ref{prop:eps-mvi-implies-delta-vi} can be repeated along this
short segment. For all sufficiently large \(K\), take
\[
t_{h,K}:=
\left(\frac{s_{h,K}}{2\widetilde L_hD_\Omega}\right)^{1/2}
\le\frac12.
\]
The same calculation then yields a regularized VI residual
\[
r_{h,K}
\le
2\sqrt{2\widetilde L_hD_\Omega s_{h,K}}
\longrightarrow0.
\]
Hence, for any sequence
\(\tau_h\downarrow0\), one may choose \(K_h\) sufficiently large so that
\(
r_{h,K_h}\leq \varepsilon_h\tau_h,
\)
and define
\(
\mu_h^{\varepsilon_h}:=\bar\nu_{K_h,h}.
\)
Then \(\mu_h^{\varepsilon_h}\) satisfies the required approximate discrete
regularized VI with \(r_h:=r_{h,K_h}\) and
\(
\frac{r_h}{\varepsilon_h}\le\tau_h\to0.
\)
%\end{rmk}

\begin{rmk}[Hypomonotone case]
The preceding theorem assumes monotonicity. If \(J\) is only
\(\alpha_h\)-hypomonotone on $\mathcal{P}(X_h)$, the argument can
still work if
\[
    \frac{\alpha_h}{\varepsilon_h}\to0.
\]
For a fixed, genuinely hypomonotone problem with \(\alpha>0\), this condition
is incompatible with \(\varepsilon_h\downarrow0\). In that case, entropy may
stabilize the problem for \(\varepsilon_h>\alpha\), but the monotonicity-based
zero-penalty selection argument does not extend to \(\varepsilon_h\downarrow0\).
\end{rmk}

\section{Numerical Experiments}\label{sec:experiment}
In the experiments below, we use the finite-dimensional implementation of the KL Mirror-Prox method developed in Section~\ref{sec:algo}.\footnote{All code is available at \url{https://github.com/Lethargy/kl-mirror-prox-mfe}} For a mesh resolution \(h>0\), let \(X_h=\{x_1,\ldots,x_M\}\subset\Omega\) be a finite mesh satisfying \eqref{eq:mesh-projection}, and identify each measure on \(X_h\) with its vector of weights in \(\Delta_M\). Algorithm~\ref{alg:kl-mirror-extragradient} is the explicit finite-dimensional iteration \eqref{eq:finite-KL-MP}. The prediction step evaluates the cost at the current iterate \(\mu_k\), while the correction step evaluates it at the predictor \(\nu_k\) and reweights the original vector \(p_k\). The normalization steps keep both vectors in \(\Delta_M\), and an initialization with full support retains full support throughout the iteration.

Fix \(h>0\) and write \(X_h=\{x_1,\ldots,x_M\}\). Throughout the algorithm, we suppress the dependence of the iterates on \(h\). Thus \(p_{k,i}\) and \(q_{k,i}\) denote the weights assigned to \(x_i\) at iteration \(k\), consistently with the notation of Section~\ref{sec:algo}.

\begin{algorithm}[H]
\caption{KL Mirror-Prox method}
\label{alg:kl-mirror-extragradient}
\begin{algorithmic}[1]
\State Choose \(p_0\in\Delta_M\) with \(p_{0,i}>0\) for every \(i\), and choose a stepsize \(\lambda\) according to \eqref{eq:step-size-condition}.
\For{\(k=0,1,2,\ldots\)}
    \State $\mu_k \gets \sum_{i=1}^M p_{k,i}\delta_{x_i}$
    \For{\(i=1,\ldots,M\)}
        \State $q_{k,i} \gets p_{k,i}\exp\!\left(-\lambda J(\mu_k,x_i)\right)$
        \Comment{Prediction step}
    \EndFor
    \State $q_k\gets q_k\big/\sum_{j=1}^M q_{k,j}$
    \Comment{Normalize}
    \State $\nu_k \gets \sum_{i=1}^M q_{k,i}\delta_{x_i}$
    \For{\(i=1,\ldots,M\)}
        \State $p_{k+1,i}\gets p_{k,i}\exp\!\left(-\lambda J(\nu_k,x_i)\right)$
        \Comment{Correction step}
    \EndFor
    \State $p_{k+1}\gets p_{k+1}\big/\sum_{j=1}^M p_{k+1,j}$
    \Comment{Normalize}
\EndFor
\end{algorithmic}
\end{algorithm}

\subsection{Arctangent Example}
Consider
\begin{equation}
\label{ex:arctan}
J(\mu,x) = x^2+\int_{\Omega}\arctan(x-y)\mu(dy), \qquad \Omega=[-1,1].
\end{equation}
We verify that the assumptions are satisfied.

The set \(\Omega\) is compact. Moreover, for each \(\mu\in\cP(\Omega)\), continuity of \(x\mapsto J(\mu,x)\) follows from dominated convergence. Thus Assumption~\ref{ass:standing} holds.

Since $\arctan(y-x)=-\arctan(x-y)$,
\begin{equation}
\label{eq:atan-ll-monotone}
\left\langle J(\mu,\cdot)-J(\nu,\cdot),\mu-\nu\right\rangle = \iint_{\Omega^2}\arctan(x-y)(\mu - \nu)(dx)(\mu - \nu)(dy)=0.
\end{equation}
Thus Lasry--Lions monotonicity (Assumption~\ref{ass:LL-mono}) is satisfied.

Since \(y\mapsto\arctan(x-y)\) is \(1\)-Lipschitz, the Kantorovich--Rubinstein duality gives
\[
\left|J(\mu,x)-J(\nu,x)\right| =
\left|\int_\Omega \arctan(x-y)(\mu-\nu)(dy)\right| \leq W_1(\mu,\nu).
\]
Taking the supremum over \(x\in\Omega\) shows that
Assumption~\ref{ass:W1-lip} holds with \(L_m=1\).

Finally, for \(x,z\in\Omega\),
\[
|J(\mu,x)-J(\mu,z)| \leq |x^2-z^2|
+\int_\Omega|\arctan(x-y)-\arctan(z-y)|\mu(dy) \leq 3|x-z|.
\]
Hence Assumption~\ref{ass:spatial-lipschitz} holds with \(L_x=3\). Since \(D_\Omega=2\), the step-size condition becomes \(0<\lambda<\frac12\).

A direct calculation shows that the equilibrium is \(\mu^\ast=\delta_{-1/2}\). To implement Algorithm~\ref{alg:kl-mirror-extragradient}, we take \(M\) equally spaced points \(x_1,\ldots,x_M\) in \([-1,1]\), initialize uniformly, and use the stepsize \(\lambda=0.49\). Since \(m_h=1/M\), \(L_m=1\), \(L_x=3\), and \(h=1/(M-1)\), Theorem~\ref{thm:quantitative-mesh-approximate-MFE} gives
\[
\sup_{\eta \in \cP(\Omega)}\bigl\langle J(\eta,\cdot), \bar\nu_{K,h} - \eta \bigr\rangle \leq \frac{\log M}{\lambda K} + \frac{5}{M-1}.
\]
Moreover, by \eqref{eq:atan-ll-monotone}, $\langle J(\eta,\cdot), \bar\nu_{K,h} - \eta \rangle = \langle J(\bar\nu_{K,h},\cdot), \bar\nu_{K,h} - \eta \rangle$. Thus the Minty gap and the VI gap coincide for this particular choice of $J$. Consequently,
\[
\sup_{\eta \in \cP(\Omega)}\bigl\langle J(\bar\nu_{K,h},\cdot), \bar\nu_{K,h} - \eta \bigr\rangle \leq \frac{\log M}{\lambda K} + \frac{5}{M-1}.
\]
Figure~\ref{fig:arctan-gaps} compares the Minty and VI gaps with the common theoretical bound on an \(M=1001\) mesh, while Figure~\ref{fig:arctan-averaged-measure} shows the evolution of the averaged predictor measure on an \(M=8\) mesh.
\begin{figure}
\centering
\includegraphics[width=\textwidth]{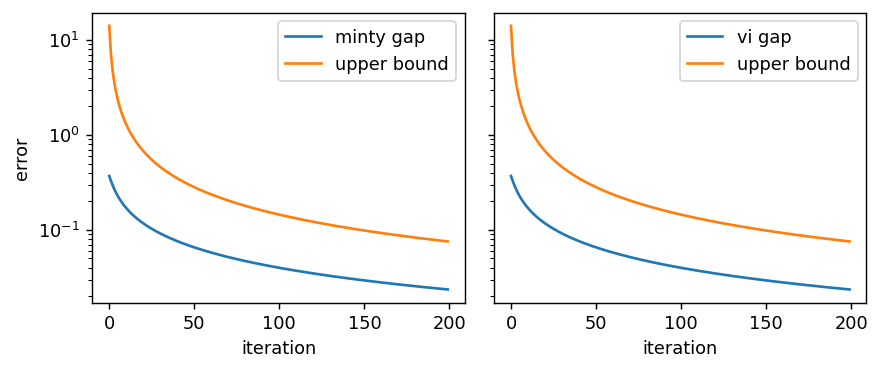}
\caption{Minty gap (left), variational inequality gap (right), and the common upper bound from Theorem~\ref{thm:quantitative-mesh-approximate-MFE} for Example~\eqref{ex:arctan}, using \(M=1001\).}
\label{fig:arctan-gaps}
\end{figure}

\begin{figure}
\centering
\includegraphics[width=\textwidth]{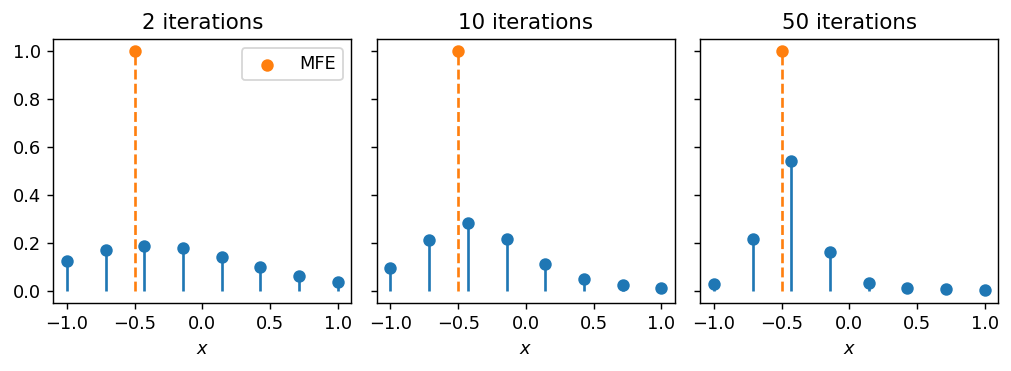}
\caption{Averaged predictor measure $\bar \nu_{K,h}$ for Example~\eqref{ex:arctan} at selected iterations with $M = 8$. The dashed line marks the equilibrium $x^\ast=-1/2$.}
\label{fig:arctan-averaged-measure}
\end{figure}

\subsection{Double-Well Example}
Consider $\Omega = [-2,2]$ and
\begin{equation}
\label{eq:double-well}
J(\mu,x)
=
(x^2-1)^2+\alpha \bar\mu x,
\qquad
\bar\mu:=\int_{\Omega} y\,\mu(dy),
\qquad
\alpha>0.
\end{equation}
The set \(\Omega\) is compact. For each $\mu \in \cP(\Omega)$, $x \mapsto J(\mu,x)$ is polynomial and thus continuous. Assumption~\ref{ass:standing} is satisfied.

For $\mu, \nu \in \cP(\Omega)$, 
\begin{equation*}
\left\langle J(\mu,\cdot)-J(\nu,\cdot),\mu-\nu\right\rangle = \alpha\del{\bar\mu-\bar\nu}^2 \geq 0.
\end{equation*}
Thus Lasry--Lions monotonicity (Assumption~\ref{ass:LL-mono}) holds.

For each $x \in \Omega$,
\[
\abs{J(\mu,x) - J(\nu,x)} = \alpha\abs{\bar\mu - \bar \nu}\abs{x} \leq 2 \alpha\abs{\bar\mu - \bar \nu} \leq 2 \alpha W_1(\mu,\nu).
\]
The last inequality follows from Kantorovich--Rubinstein duality. Taking the supremum over \(x\in\Omega\) shows that \(W_1\)-Lipschitz continuity (Assumption~\ref{ass:W1-lip}) holds with \(L_m=2\alpha\).

Finally, \(\partial_x J(\mu,x)=4x(x^2-1)+\alpha\bar\mu\). Since \(x,\bar\mu\in[-2,2]\), \(\abs{\partial_x J(\mu,x)}\leq 24+2\alpha\). Thus, by the mean value theorem,
\[
\abs{J(\mu,x)-J(\mu,y)}
\leq (24+2\alpha)\abs{x-y},
\]
uniformly over \(\mu\in\cP(\Omega)\). Hence
Assumption~\ref{ass:spatial-lipschitz} holds with
\(L_x=24+2\alpha\).

The equilibrium is
\(\mu^{\ast}=\frac{1}{2}\delta_{-1}+\frac{1}{2}\delta_{1}\).

For this example, $D_\Omega=4$, $L_m=2\alpha$, and $L_x=24+2\alpha$. Hence the stepsize condition is $0<\lambda<1/(8\alpha)$. For \(M\) equidistant points on \([-2,2]\) and the uniform initialization, \(m_h=1/M\) and \(h=2/(M-1)\). Therefore, Theorem~\ref{thm:quantitative-mesh-approximate-MFE} gives the following upper bound on the Minty gap:
\begin{equation}
\label{eq:double-well-minty}
\sup_{\eta\in\cP(\Omega)}
\left\langle
J(\eta,\cdot),\bar\nu_{K,h}-\eta
\right\rangle
\leq
\frac{\log M}{\lambda K}
+\frac{48+12\alpha}{M-1}.
\end{equation}
Denote the right-hand side by \(\varepsilon_{K,M}\). If
\(\varepsilon_{K,M}\leq 16\alpha\), then Proposition~\ref{prop:eps-mvi-implies-delta-vi} gives the following upper bound on the VI gap:
\begin{equation}
\label{eq:double-well-vi-gap}
\sup_{\eta\in\cP(\Omega)}
\left\langle
J(\bar\nu_{K,h},\cdot),
\bar\nu_{K,h}-\eta
\right\rangle \leq
8\sqrt{\alpha\varepsilon_{K,M}} =
8\sqrt{
\alpha\left(
\frac{\log M}{\lambda K}
+\frac{48+12\alpha}{M-1}
\right)}.
\end{equation}
By Proposition~\ref{prop:eps-mfe-equivalent-eps-vi}, this is also the corresponding approximate-MFE bound.
For the numerical plots, we use \(\alpha=0.5\) and \(\lambda=0.24\).
Figure~\ref{fig:double-well-averaged-measure} shows the evolution of the averaged predictor measure, and Figure~\ref{fig:double-well-gaps} compares the two gaps with their theoretical bounds.

\begin{figure}
\centering
\includegraphics[width=\textwidth]{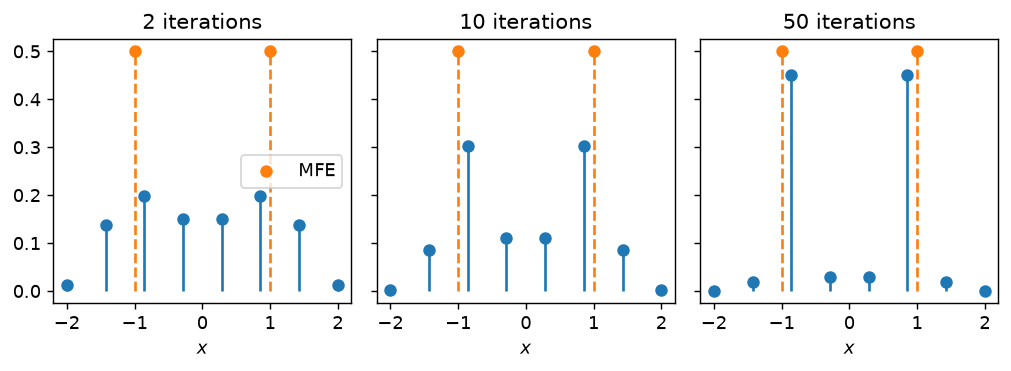}
\caption{Evolution of the averaged predictor measure \(\bar\nu_{K,h}\) for Example~\eqref{eq:double-well} at selected iterations on an \(M=8\) mesh, using \(\alpha=0.5\) and \(\lambda=0.24\). The dashed vertical lines indicate the support of the equilibrium $\mu^\ast=\frac12\delta_{-1}+\frac12\delta_1$.}
\label{fig:double-well-averaged-measure}
\end{figure}

\begin{figure}
\includegraphics[width = \textwidth]{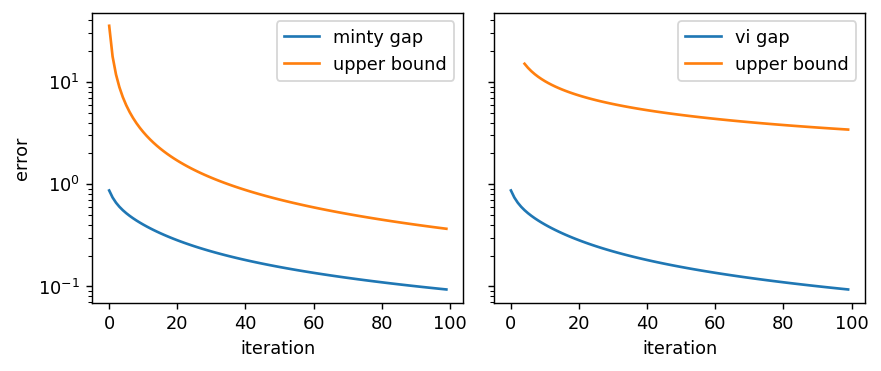}
\caption{Minty gap (left) and variational inequality gap (right), together with their respective upper bounds \eqref{eq:double-well-minty} and \eqref{eq:double-well-vi-gap}, for Example~\eqref{eq:double-well} with \(M=5001\), \(\alpha=0.5\), and \(\lambda=0.24\). Although the two gaps are of comparable magnitude, the upper bound for the VI gap is substantially looser.}
\label{fig:double-well-gaps}
\end{figure}

\subsection{Uniform Equilibrium Example}
Consider
\begin{equation}
\label{ex:uniform-equilibrium}
J(\mu,x) = \frac{x^2}{2} - \int_{\Omega}|x-y|\mu(dy),
\qquad \Omega=[-1,1].
\end{equation}
We verify that the assumptions are satisfied.

The set \(\Omega\) is compact. Moreover, for each
\(\mu\in\cP(\Omega)\), continuity of \(x\mapsto J(\mu,x)\) follows from dominated convergence. Thus
Assumption~\ref{ass:standing} holds.

Let $F_\mu(t):=\mu([-1,t])$ and $F_\nu(t):=\nu([-1,t])$. The standard one-dimensional identity for signed measures of total
mass zero gives
\[
-\iint_{\Omega^2}|x-y|(\mu-\nu)(dx)(\mu-\nu)(dy) = 2\int_{-1}^1 \bigl(F_\mu(t)-F_\nu(t)\bigr)^2\,dt.
\]
Consequently,
\begin{align*}
\left\langle
J(\mu,\cdot)-J(\nu,\cdot),\mu-\nu \right\rangle
&=
2\int_{-1}^1 \bigl(F_\mu(t)-F_\nu(t)\bigr)^2\,dt \\
&\geq
\left(\int_{-1}^1 |F_\mu(t)-F_\nu(t)|\,dt \right)^2 \\
&=
W_1(\mu,\nu)^2.
\end{align*}
The inequality follows from Cauchy--Schwarz, and the last equality is the
one-dimensional formula for \(W_1\). Thus strong Lasry--Lions monotonicity
(Assumption~\ref{ass:strong-LL-mono}) holds with \(\alpha=1\). In particular, Assumption~\ref{ass:LL-mono} is also satisfied.

Since \(y\mapsto |x-y|\) is \(1\)-Lipschitz, the
Kantorovich--Rubinstein duality gives
\[
\left|J(\mu,x)-J(\nu,x)\right| = \left| \int_\Omega |x-y|(\mu-\nu)(dy) \right| \leq W_1(\mu,\nu).
\]
Taking the supremum over \(x\in\Omega\) shows that
Assumption~\ref{ass:W1-lip} holds with \(L_m=1\).

Finally, for \(x,z\in\Omega\),
\[
|J(\mu,x)-J(\mu,z)| \leq \frac12|x^2-z^2| + \int_\Omega \bigl||x-y|-|z-y|\bigr|\mu(dy)\leq 2|x-z|.
\]
Hence Assumption~\ref{ass:spatial-lipschitz} holds with \(L_x=2\).
Since \(D_\Omega=2\), the stepsize condition becomes \(0<\lambda<\frac{1}{2}\).

The MFE is the uniform distribution on \([-1,1]\). For \(M\) equally spaced mesh points, the finite-mesh equilibrium \(\mu_h^\ast\) has full support, with \(m_h^\ast=\frac{1}{2(M-1)}\) and \(\delta_h=\frac{2}{M-1}\). Taking \(\lambda=0.49\), we have \(\gamma_h=\lambda/[2(M-1)^2]\) for \(M\geq 8\). Hence item~(3) of Theorem~\ref{thm:finite-mesh-last-iterate} gives an index \(k_{0,h}\) satisfying
\[
k_{0,h} \leq \left\lceil \frac{128D_{0,h}(M-1)^5}{\lambda} \right\rceil
\]
such that, for every \(k\geq k_{0,h}\),
\[
W_1(\mu_{k,h},\mu_h^\ast) \leq \frac{1}{2(M-1)} \left(1-\frac{\lambda}{4(M-1)^3}\right)^{(k-k_{0,h})/2}.
\]
For these parameters, we may choose \(k_{0,h}=123\). We illustrate the algorithm using \(M=8\), \(\lambda=0.49\), and the full-support initialization \(\mu_{0,h}(x_i)=\frac{1+0.9x_i}{M}\). Figure~\ref{fig:strong-lasry-lions-evolution} shows the convergence of the last iterates to the finite-mesh equilibrium, while Figure~\ref{fig:strong-lasry-lions-last-iterate} compares the observed last-iterate error with the geometric upper bound.
\begin{figure}[H]
\includegraphics[width = \textwidth]{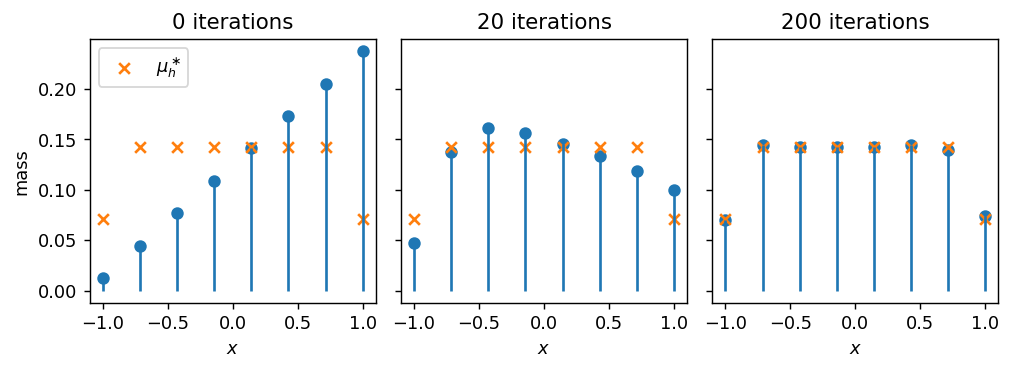}
\caption{Last iterates $\mu_{k,h}$ for Example~\eqref{ex:uniform-equilibrium} at selected iterations, using $M=8$ and $\lambda=0.49$. The crosses mark the finite-mesh equilibrium $\mu_h^\ast$.}
\label{fig:strong-lasry-lions-evolution}
\end{figure}
\begin{figure}[H]
\centering
\includegraphics{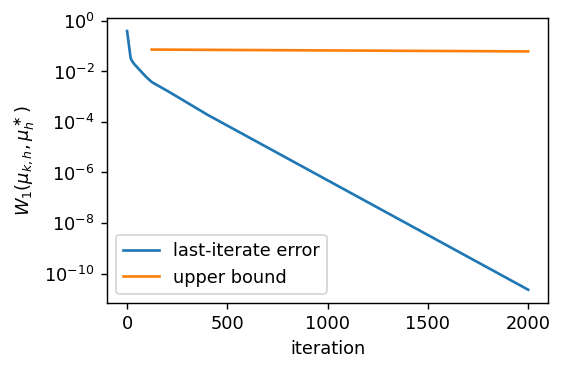}
\caption{Observed last-iterate error and the geometric upper bound from Theorem~\ref{thm:finite-mesh-last-iterate} for Example~\eqref{ex:uniform-equilibrium}, using $M=8$ and $\lambda=0.49$. The upper bound decreases very slowly over the displayed range and therefore appears nearly flat.}
\label{fig:strong-lasry-lions-last-iterate}
\end{figure}

\bibliographystyle{plain}
\bibliography{ref}

\appendix

\section{Proofs of Subsection~\ref{subsec:KL-estimate}}

\subsection{Proof of Lemma~\ref{lem:three-point}}
\begin{proof}
Since \(\Omega\) is compact and \(g\in C(\Omega)\), the function
\(e^{-\lambda g}\) is bounded, strictly positive, and continuous. Hence
\[
0<\int_\Omega e^{-\lambda g(x)}\,\rho(dx)<\infty.
\]
Therefore \(\zeta\) is a well-defined probability measure. Moreover,
\[
\frac{d\zeta}{d\rho}(x)
=
\frac{e^{-\lambda g(x)}}{\int_\Omega e^{-\lambda g(x)}\,\rho(dx)}
\]
is strictly positive and finite \(\rho\)-almost everywhere. Thus
\[
\zeta\sim\rho.
\]

We prove that \(\zeta\) is the unique minimizer. If
\(\eta\not\ll\rho\), then
\[
D_{\mathrm{KL}}(\eta\|\rho)=+\infty,
\]
so such \(\eta\) cannot minimize. Thus it suffices to consider
\(\eta\ll\rho\). For such \(\eta\), using
\[
\log\frac{d\zeta}{d\rho}
=
-\lambda g-\log Z_\rho,
\]
where $Z_\rho = \int_\Omega e^{-\lambda g(x)}\,\rho(dx)$, we get
\[
\log\frac{d\eta}{d\zeta}
=
\log\frac{d\eta}{d\rho}
-
\log\frac{d\zeta}{d\rho}
=
\log\frac{d\eta}{d\rho}
+
\lambda g
+
\log Z_\rho.
\]
Therefore
\[
\begin{aligned}
D_{\mathrm{KL}}(\eta\|\zeta)
&=
\int_\Omega
\log\frac{d\eta}{d\zeta}\,d\eta
\\
&=
\int_\Omega
\log\frac{d\eta}{d\rho}\,d\eta
+
\lambda\int_\Omega g\,d\eta
+
\log Z_\rho
\\
&=
D_{\mathrm{KL}}(\eta\|\rho)
+
\lambda\langle g,\eta\rangle
+
\log Z_\rho.
\end{aligned}
\]
Hence
\[
\lambda\langle g,\eta\rangle
+
D_{\mathrm{KL}}(\eta\|\rho)
=
D_{\mathrm{KL}}(\eta\|\zeta)
-
\log Z_\rho.
\]
Since
\[
D_{\mathrm{KL}}(\eta\|\zeta)\ge 0,
\]
with equality if and only if \(\eta=\zeta\), the unique minimizer is
\(\eta=\zeta\).

Now let \(\eta\in\mathcal P(\Omega)\) satisfy \(D_{\mathrm{KL}}(\eta\|\rho)<\infty\). Then \(\eta\ll\rho\). Since
\(\zeta\sim\rho\), we also have \(\eta\ll\zeta\). Decompose
\[
D_{\mathrm{KL}}(\eta\|\zeta)
=
D_{\mathrm{KL}}(\eta\|\rho)
+
\int_\Omega
\log\frac{d\rho}{d\zeta}\,d\eta.
\]
The first term is finite by assumption. The second term is finite because
\[
\log\frac{d\rho}{d\zeta}
=
\lambda g+\log Z_\rho
\in L^\infty(\rho),
\]
and \(\eta\in\mathcal P(\Omega)\). Therefore
\[
D_{\mathrm{KL}}(\eta\|\zeta)<\infty.
\]

Finally, we compute
\[
\begin{aligned}
&D_{\mathrm{KL}}(\eta\|\rho)
-
D_{\mathrm{KL}}(\eta\|\zeta)
-
D_{\mathrm{KL}}(\zeta\|\rho)
\\
&=
\int_\Omega
\log\frac{d\eta}{d\rho}\,d\eta
-
\int_\Omega
\log\frac{d\eta}{d\zeta}\,d\eta
-
\int_\Omega
\log\frac{d\zeta}{d\rho}\,d\zeta
\\
&=
\int_\Omega
\log\frac{d\zeta}{d\rho}\,d\eta
-
\int_\Omega
\log\frac{d\zeta}{d\rho}\,d\zeta
\\
&=
\int_\Omega
\log\frac{d\zeta}{d\rho}\,(d\eta-d\zeta).
\end{aligned}
\]
Using again
\[
\log\frac{d\zeta}{d\rho}
=
-\lambda g-\log Z_\rho,
\]
we obtain
\[
\int_\Omega
\log\frac{d\zeta}{d\rho}\,(d\eta-d\zeta)
=
-\lambda\int_\Omega g\,(d\eta-d\zeta)
-
\log Z_\rho
\int_\Omega (d\eta-d\zeta)
=
-\lambda\langle g,\eta-\zeta\rangle,
\]
because both \(\eta\) and \(\zeta\) are probability measures.
Thus
\[
D_{\mathrm{KL}}(\eta\|\rho)
-
D_{\mathrm{KL}}(\eta\|\zeta)
-
D_{\mathrm{KL}}(\zeta\|\rho)
=
\lambda\langle g,\zeta-\eta\rangle.
\]
\end{proof}

\subsection{Proof of Lemma~\ref{lem:one-step-Mirror-Prox}}
\begin{proof}
Finiteness of \(
D_{\mathrm{KL}}(\eta\|\mu_k)\) and \(D_{\mathrm{KL}}(\eta\|\nu_k)
\) follows directly from Lemma~\ref{lem:three-point}.

Apply Lemma~\ref{lem:three-point} to the second step with
\(\rho=\mu_k\), \(g=J(\nu_k,\cdot)\), and \(\zeta=\mu_{k+1}\). This gives
\[
\lambda\langle J(\nu_k,\cdot),\mu_{k+1}-\eta\rangle
=
D_{\mathrm{KL}}(\eta\|\mu_k)
-
D_{\mathrm{KL}}(\eta\|\mu_{k+1})
-
D_{\mathrm{KL}}(\mu_{k+1}\|\mu_k).
\]
Also apply Lemma~\ref{lem:three-point} to the first step, with
\(\rho=\mu_k\), \(g=J(\mu_k,\cdot)\), and \(\zeta=\nu_k\), taking
\(\eta=\mu_{k+1}\). Then
\[
\lambda\langle J(\mu_k,\cdot),\nu_k-\mu_{k+1}\rangle
=
D_{\mathrm{KL}}(\mu_{k+1}\|\mu_k)
-
D_{\mathrm{KL}}(\mu_{k+1}\|\nu_k)
-
D_{\mathrm{KL}}(\nu_k\|\mu_k).
\]
Therefore, combining the two equalities gives
\[
\begin{aligned}
\lambda\langle J(\nu_k,\cdot),\nu_k-\eta\rangle
&=
\lambda\langle J(\nu_k,\cdot),\mu_{k+1}-\eta\rangle
+
\lambda\langle J(\mu_k,\cdot),\nu_k-\mu_{k+1}\rangle \\
&\qquad
+
\lambda\langle J(\nu_k,\cdot)-J(\mu_k,\cdot),\nu_k-\mu_{k+1}\rangle\\
&=
D_{\mathrm{KL}}(\eta\|\mu_k)
-
D_{\mathrm{KL}}(\eta\|\mu_{k+1})
-
D_{\mathrm{KL}}(\mu_{k+1}\|\nu_k)
-
D_{\mathrm{KL}}(\nu_k\|\mu_k) \\
&\qquad
+
\lambda\langle J(\nu_k,\cdot)-J(\mu_k,\cdot),\nu_k-\mu_{k+1}\rangle.
\end{aligned}
\]
By the \(W_1\)-Lipschitz assumption, boundedness of $\Omega$, and Pinsker's inequality,
\[
\begin{aligned}
\lambda
\left|
\langle J(\nu_k,\cdot)-J(\mu_k,\cdot),\nu_k-\mu_{k+1}\rangle
\right|
&\le
\lambda
\|J(\nu_k,\cdot)-J(\mu_k,\cdot)\|_\infty
\|\nu_k-\mu_{k+1}\|_1 \\
&\le
\lambda L_m
W_1(\nu_k,\mu_k)
\|\nu_k-\mu_{k+1}\|_1 \\
&\le
\lambda L_m
D_\Omega\|\nu_k-\mu_k\|_1
\|\nu_k-\mu_{k+1}\|_1 \\ 
&\le
\lambda L_m D_\Omega
\sqrt{2D_{\mathrm{KL}}(\nu_k\|\mu_k)}
\sqrt{2D_{\mathrm{KL}}(\mu_{k+1}\|\nu_k)} \\
&\le
\lambda L_m D_\Omega
\left[
D_{\mathrm{KL}}(\nu_k\|\mu_k)
+
D_{\mathrm{KL}}(\mu_{k+1}\|\nu_k)
\right].
\end{aligned}
\]
Hence
\[
\lambda\langle J(\nu_k,\cdot),\nu_k-\eta\rangle
\le
D_{\mathrm{KL}}(\eta\|\mu_k)
-
D_{\mathrm{KL}}(\eta\|\mu_{k+1})
-
(1-\lambda L_m D_\Omega)
\left[
D_{\mathrm{KL}}(\nu_k\|\mu_k)
+
D_{\mathrm{KL}}(\mu_{k+1}\|\nu_k)
\right].
\]
\end{proof}

\section{Proofs of Subsection~\ref{subsec:ergodic-minty}}
\subsection{Proof of Lemma~\ref{lem:ergodic-minty}}
\begin{proof}
Since \(\lambda L_mD_\Omega<1\), Lemma~\ref{lem:one-step-Mirror-Prox} gives
\[
\lambda\langle J(\nu_k,\cdot),\nu_k-\eta\rangle
\le
D_{\mathrm{KL}}(\eta\|\mu_k)
-
D_{\mathrm{KL}}(\eta\|\mu_{k+1}).
\]
Summing from \(k=0\) to \(K-1\), we get
\[
\lambda
\sum_{k=0}^{K-1}
\langle J(\nu_k,\cdot),\nu_k-\eta\rangle
\le
D_{\mathrm{KL}}(\eta\|\mu_0)
-
D_{\mathrm{KL}}(\eta\|\mu_K)
\le
D_{\mathrm{KL}}(\eta\|\mu_0).
\]
Hence
\[
\frac1K
\sum_{k=0}^{K-1}
\langle J(\nu_k,\cdot),\nu_k-\eta\rangle
\le
\frac{D_{\mathrm{KL}}(\eta\|\mu_0)}{\lambda K}.
\]
By Lasry--Lions monotonicity,
\[
\langle J(\nu_k,\cdot)-J(\eta,\cdot),\nu_k-\eta\rangle\ge 0.
\]
Therefore
\[
\langle J(\eta,\cdot),\nu_k-\eta\rangle
\le
\langle J(\nu_k,\cdot),\nu_k-\eta\rangle.
\]
Averaging in \(k\), we obtain
\[
\langle J(\eta,\cdot),\bar\nu_K-\eta\rangle
\le
\frac1K
\sum_{k=0}^{K-1}
\langle J(\nu_k,\cdot),\nu_k-\eta\rangle
\le
\frac{D_{\mathrm{KL}}(\eta\|\mu_0)}{\lambda K}.
\]

\end{proof}

\subsection{Proof of Corollary~\ref{cor:cluster-point-mintyVI}}
\begin{proof}
Since \(\Omega\) is compact, \(\mathcal P(\Omega)\) is compact for the topology
of weak convergence. Hence the sequence
\((\bar\nu_K)_{K\ge 1}\subset \mathcal P(\Omega)\) admits a weakly convergent
subsequence. Therefore there exist \(K_j\to\infty\) and
\(\bar\nu\in\mathcal P(\Omega)\) such that
\[
\bar\nu_{K_j}\Rightarrow \bar\nu.
\]

Now fix any \(\eta\in\mathcal P(\Omega)\) such that
\[
D_{\mathrm{KL}}(\eta\|\mu_0)<\infty.
\]
Applying Lemma~\ref{lem:ergodic-minty} along the subsequence \(K_j\), we obtain
\[
\langle J(\eta,\cdot),\bar\nu_{K_j}-\eta\rangle
\le
\frac{D_{\mathrm{KL}}(\eta\|\mu_0)}{\lambda K_j}.
\]
Since \(K_j\to\infty\), the right-hand side converges to \(0\).

On the other hand, \(J(\eta,\cdot)\in C(\Omega)\). Hence weak convergence gives
\[
\int_\Omega J(\eta,x)\,\bar\nu_{K_j}(dx)
\to
\int_\Omega J(\eta,x)\,\bar\nu(dx).
\]
Therefore
\[
\langle J(\eta,\cdot),\bar\nu_{K_j}-\eta\rangle
\to
\langle J(\eta,\cdot),\bar\nu-\eta\rangle.
\]
Passing to the limit \(j\to\infty\) yields
\[
\langle J(\eta,\cdot),\bar\nu-\eta\rangle\le 0.
\]
Since \(\eta\) was arbitrary among all measures satisfying
\(D_{\mathrm{KL}}(\eta\|\mu_0)<\infty\), the claim follows.
\end{proof}

\subsection{Proof of Corollary~\ref{cor:finite-mesh-approximate-MFE}}
\begin{proof}
Let \(\eta\in\mathcal P(\Omega)\). Since \(\mu_0\) has full
support,
\[
\begin{aligned}
    D_{\mathrm{KL}}(\eta\|\mu_0)
    &=
    \sum_{x\in \Omega}
        \eta(\{x\})
        \log
        \frac{\eta(\{x\})}{\mu_0(\{x\})} \\
    &=
    \sum_{x\in \Omega}
        \eta(\{x\})\log\eta(\{x\})
    -
    \sum_{x\in \Omega}
        \eta(\{x\})\log\mu_0(\{x\}) \\
    &\leq
    -\sum_{x\in \Omega}
        \eta(\{x\})\log m
    =
    \log\frac1{m}.
\end{aligned}
\]
Here we use the convention \(0\log0=0\) and the fact that
\(\sum_x\eta(\{x\})\log\eta(\{x\})\leq0\).

Applying Lemma~\ref{lem:ergodic-minty} on the finite-state space
\(\Omega\) gives
\[
    \bigl\langle
        J(\eta,\cdot),
        \bar\nu_K-\eta
    \bigr\rangle
    \leq
    \frac{
        D_{\mathrm{KL}}(\eta\|\mu_0)
    }{
        \lambda K
    }
    \leq
    \frac{\log(1/m)}{\lambda K}
    =\varepsilon_K,\qquad
    \forall \eta\in\mathcal P(\Omega).
\]

\end{proof}

\section{Proofs of Subsection~\ref{subsec:finite-mesh-approximation}}
\subsection{Proof of Proposition~\ref{prop:fixed-K-consistency}}
We first introduce notation for the exponential-tilt map.
\begin{definition}
For
\(\rho\in\mathcal P(\Omega)\) and \(g\in C(\Omega)\), define
\[
\mathcal T_\lambda(\rho,g)(dx)
:=
\frac{e^{-\lambda g(x)}\rho(dx)}
{\int_\Omega e^{-\lambda g(z)}\rho(dz)}.
\]
Since \(\Omega\) is compact and \(g\in C(\Omega)\), the denominator is
strictly positive and finite.
\end{definition}
In this notation, the KL Mirror-Prox iteration becomes \[ \nu_k = \mathcal T_\lambda \bigl(\mu_k,J(\mu_k,\cdot)\bigr), \qquad \mu_{k+1} = \mathcal T_\lambda \bigl(\mu_k,J(\nu_k,\cdot)\bigr). \]

\begin{proof}
We first prove the basic stability property of the exponential-tilt map.

Suppose \(\rho_h\Rightarrow \rho\) in \(\mathcal P(\Omega)\) and
\(g_h\to g\) uniformly on \(\Omega\). We claim that
\[
\mathcal T_\lambda(\rho_h,g_h)
\Rightarrow
\mathcal T_\lambda(\rho,g).
\]
Let \(\varphi\in C(\Omega)\). Then
\[
\int_\Omega \varphi(x)\,
\mathcal T_\lambda(\rho_h,g_h)(dx)
=
\frac{
\int_\Omega \varphi(x)e^{-\lambda g_h(x)}\rho_h(dx)
}{
\int_\Omega e^{-\lambda g_h(x)}\rho_h(dx)
}.
\]
Since \(g_h\to g\) uniformly and \(\varphi\in C(\Omega)\), we have
\[
\varphi e^{-\lambda g_h}
\to
\varphi e^{-\lambda g}
\quad\text{uniformly on }\Omega,
\]
and
\[
e^{-\lambda g_h}
\to
e^{-\lambda g}
\quad\text{uniformly on }\Omega.
\]
Using weak convergence of \(\rho_h\), we obtain
\[
\int_\Omega \varphi e^{-\lambda g_h}\,d\rho_h
\to
\int_\Omega \varphi e^{-\lambda g}\,d\rho,
\]
and
\[
\int_\Omega e^{-\lambda g_h}\,d\rho_h
\to
\int_\Omega e^{-\lambda g}\,d\rho.
\]
The limiting denominator is strictly positive. Therefore
\[
\int_\Omega \varphi\,d\mathcal T_\lambda(\rho_h,g_h)
\to
\int_\Omega \varphi\,d\mathcal T_\lambda(\rho,g).
\]
Hence
\[
\mathcal T_\lambda(\rho_h,g_h)
\Rightarrow
\mathcal T_\lambda(\rho,g).
\]

We now prove the result by induction on \(k\).

For \(k=0\), the convergence \(\mu_{0,h}\Rightarrow\mu_0\) holds by
assumption. Since $\Omega$ is compact, $W_1$ metrizes weak convergence on $\mathcal{P}(\Omega)$. Hence, by Assumption~\ref{ass:W1-lip},
\[
J(\mu_{0,h},\cdot)\to J(\mu_0,\cdot)
\quad\text{uniformly on }\Omega.
\]
Applying the stability property of \(\mathcal T_\lambda\), we get
\[
\nu_{0,h}
=
\mathcal T_\lambda(\mu_{0,h},J(\mu_{0,h},\cdot))
\Rightarrow
\mathcal T_\lambda(\mu_0,J(\mu_0,\cdot))
=
\nu_0.
\]
Again by Assumption~\ref{ass:W1-lip},
\[
J(\nu_{0,h},\cdot)\to J(\nu_0,\cdot)
\quad\text{uniformly on }\Omega.
\]
Therefore
\[
\mu_{1,h}
=
\mathcal T_\lambda(\mu_{0,h},J(\nu_{0,h},\cdot))
\Rightarrow
\mathcal T_\lambda(\mu_0,J(\nu_0,\cdot))
=
\mu_1.
\]

Now assume that
\[
\mu_{k,h}\Rightarrow\mu_k.
\]
Assumption~\ref{ass:W1-lip} gives
\[
J(\mu_{k,h},\cdot)\to J(\mu_k,\cdot)
\quad\text{uniformly on }\Omega.
\]
Hence
\[
\nu_{k,h}
=
\mathcal T_\lambda(\mu_{k,h},J(\mu_{k,h},\cdot))
\Rightarrow
\mathcal T_\lambda(\mu_k,J(\mu_k,\cdot))
=
\nu_k.
\]
Then
\[
J(\nu_{k,h},\cdot)\to J(\nu_k,\cdot)
\quad\text{uniformly on }\Omega,
\]
and so
\[
\mu_{k+1,h}
=
\mathcal T_\lambda(\mu_{k,h},J(\nu_{k,h},\cdot))
\Rightarrow
\mathcal T_\lambda(\mu_k,J(\nu_k,\cdot))
=
\mu_{k+1}.
\]
This proves the induction.

Finally, for any \(\varphi\in C(\Omega)\),
\[
\int_\Omega \varphi\,d\bar\nu_{K,h}
=
\frac1K\sum_{k=0}^{K-1}
\int_\Omega \varphi\,d\nu_{k,h}
\to
\frac1K\sum_{k=0}^{K-1}
\int_\Omega \varphi\,d\nu_k
=
\int_\Omega \varphi\,d\bar\nu_K.
\]
Thus
\[
\bar\nu_{K,h}\Rightarrow\bar\nu_K.
\]
\end{proof}

\subsection{Proof of Theorem~\ref{thm:mesh-refinement-MFE}}
\begin{proof}
Fix \(h>0\). Let \(\eta\in\mathcal P(\Omega)\) be arbitrary and define its mesh
projection by
\[
\eta_h:=(Q_h)_\#\eta\in\mathcal P(X_h).
\]
Since \(|Q_h(x)-x|\le h\), we have
\[
W_1(\eta_h,\eta)\le h.
\]
In particular,
\[\eta_{h}\Rightarrow\eta.\]

Applying Lemma~\ref{lem:ergodic-minty} to \(\mathcal P(X_h)\), we obtain, for every \(K\ge1\), 
\[ \langle J(\eta_h,\cdot), \bar\nu_{K,h}-\eta_h \rangle \le \frac{D_{\mathrm{KL}}(\eta_h\|\mu_{0,h})}{\lambda K}. \] 
Because \(\mu_{0,h}\) has full support on \(X_h\), this relative entropy is finite. 
Moreover, using 
\[ \sum_{x\in X_h} \eta_h(\{x\})\log\eta_h(\{x\}) \le0 \] 
and 
\[ \mu_{0,h}(\{x\})\ge m_h, \qquad \forall x\in X_h, \] 
we obtain 
\[ 
D_{\mathrm{KL}}(\eta_h\|\mu_{0,h}) 
= \sum_{x\in X_h} \eta_h(\{x\})\log\eta_h(\{x\}) - \sum_{x\in X_h} \eta_h(\{x\})\log\mu_{0,h}(\{x\}) 
\le \log\frac1{m_h}.
\] 
Therefore, 
\begin{equation*}
\langle J(\eta_h,\cdot), \eta_h-\bar\nu_{K,h} \rangle \ge -\frac{\log(1/m_h)}{\lambda K}, \qquad \forall K\ge1. 
\end{equation*}

Since \(\Omega\) is compact, \(\mathcal P(\Omega)\) is weakly compact. Hence
every sequence \((\bar\nu_h)\) admits weakly convergent subsequences. Let
\[
\bar\nu_{h_j}\Rightarrow\bar\nu
\]
along some sequence \(h_j\downarrow0\). We pass to the limit in
\[
\langle J(\eta_{h_j},\cdot),\bar\nu_{h_j}-\eta_{h_j}\rangle
\le
\frac{\log(1/m_{h_j})}{\lambda K_{h_j}}.
\]

Since \(W_1(\eta_{h_j},\eta)\le h_j\), Assumption~\ref{ass:W1-lip}
implies
\[
\|J(\eta_{h_j},\cdot)-J(\eta,\cdot)\|_{L^\infty(\Omega)}
\to0.
\]
Also, $\eta_{h_j}\Rightarrow\eta$ and \(J(\eta,\cdot)\in C(\Omega)\). Hence
\[
\int_\Omega J(\eta_{h_j},x)\,\bar\nu_{h_j}(dx)
\to
\int_\Omega J(\eta,x)\,\bar\nu(dx),
\]
and similarly,
\[
\int_\Omega J(\eta_{h_j},x)\,\eta_{h_j}(dx)
\to
\int_\Omega J(\eta,x)\,\eta(dx).
\]
In addition, by the choice of $K_h$,
\[
\frac{\log(1/m_{h_j})}{K_{h_j}}\to0.
\]
Thus
\[
\langle J(\eta,\cdot),\bar\nu-\eta\rangle\le0.
\]
Since \(\eta\in\mathcal P(\Omega)\) was arbitrary, \(\bar\nu\) is a Minty
solution:
\[
\langle J(\eta,\cdot),\bar\nu-\eta\rangle\le0,
\qquad
\forall \eta\in\mathcal P(\Omega).
\]
By Proposition~\ref{prop:mvi-implies-vi}, \(\bar\nu\) solves the variational inequality.

Finally, suppose \(\bar\nu\) is the unique MFE. If \(\bar\nu_h\) did not converge weakly to \(\bar\nu\), then
there would exist a subsequence \((\bar\nu_{h_j})\) that stays away from
\(\bar\nu\). By compactness of \(\mathcal P(\Omega)\), this subsequence has a
further weakly convergent subsequence
\(
\bar\nu_{h_{j_\ell}}\Rightarrow\bar\nu'.
\)
By the result already proved, \(\bar\nu'\) is an MFE, so
\(
\bar\nu'=\bar\nu
\)
by uniqueness,
which contradicts the choice of the subsequence. Hence
\(
\bar\nu_h\Rightarrow\bar\nu.
\)
\end{proof}

\subsection{Proof of Theorem~\ref{thm:quantitative-mesh-approximate-MFE}}

\begin{proof} 
Fix \(h>0\), \(K\ge1\), and \(\eta\in\mathcal P(\Omega)\), and define \[ \eta_h:=(Q_h)_\#\eta. \] 
As shown in the proof of Theorem~\ref{thm:mesh-refinement-MFE}, 
\[ W_1(\eta,\eta_h)\le h, \] 
and 
\begin{equation} \label{eq:quantitative-prop-discrete-bound}
\bigl\langle J(\eta_h,\cdot), \eta_h-\bar\nu_{K,h} \bigr\rangle \ge -\frac{\log(1/m_h)}{\lambda K}. 
\end{equation} 
We have 
\[ \begin{aligned} \langle J(\eta,\cdot),\eta-\bar\nu_{K,h}\rangle 
={}& \langle J(\eta_h,\cdot),\eta_h-\bar\nu_{K,h}\rangle \\ 
&+ \langle J(\eta,\cdot)-J(\eta_h,\cdot),\eta-\bar\nu_{K,h}\rangle \\
&- \langle J(\eta_h,\cdot),\eta_h-\eta\rangle. \end{aligned} \]
For the second term, by Assumption~\ref{ass:W1-lip}, 
\[ \left| \langle J(\eta,\cdot)-J(\eta_h,\cdot),\eta-\bar\nu_{K,h}\rangle \right|
\le \|J(\eta,\cdot)-J(\eta_h,\cdot)\|_{\infty} 
\|\eta-\bar\nu_{K,h}\|_1
\le L_mW_1(\eta,\eta_h)\cdot 2 \le 2L_mh. \] 
For the last term, by Assumption~\ref{ass:spatial-lipschitz},
\[ \left| \langle J(\eta_h,\cdot),\eta_h-\eta\rangle \right| \le L_x W_1(\eta,\eta_h) \le L_xh. \]
Combining these estimates with \eqref{eq:quantitative-prop-discrete-bound} yields 
\[ \bigl\langle J(\eta,\cdot), \eta-\bar\nu_{K,h} \bigr\rangle 
\ge -\frac{\log(1/m_h)}{\lambda K} -(2L_m+L_x)h
= -\varepsilon_{K,h}.\] 
Since \(\eta\in\mathcal P(\Omega)\) was arbitrary, the result follows. 
\end{proof}

\section{Proofs of Subsection~\ref{subsec:eps-MFE}}
\subsection{Proof of Proposition~\ref{prop:eps-mfe-equivalent-eps-vi}}
\begin{proof}
Assume first that \(\mu^\varepsilon\) satisfies the \(\varepsilon\)-VI. Since
the inequality holds for every \(\eta\in\mathcal P(\Omega)\), we may choose
\(\eta=\delta_x\) for arbitrary \(x\in\Omega\). Then
\[
    J(\mu^\varepsilon,x)
    -
    \int_\Omega J(\mu^\varepsilon,y)\,\mu^\varepsilon(dy)
    \ge
    -\varepsilon.
\]
Equivalently,
\[
    \int_\Omega J(\mu^\varepsilon,y)\,\mu^\varepsilon(dy)
    \le
    J(\mu^\varepsilon,x)+\varepsilon,
    \qquad
    \forall x\in\Omega.
\]
Taking the minimum over \(x\in\Omega\), which exists because
\(J(\mu^\varepsilon,\cdot)\in C(\Omega)\) and \(\Omega\) is compact, gives
\[
    \int_\Omega J(\mu^\varepsilon,y)\,\mu^\varepsilon(dy)
    \le
    \min_{x\in\Omega}J(\mu^\varepsilon,x)+\varepsilon.
\]
Thus \(\mu^\varepsilon\) is an \(\varepsilon\)-MFE.

Conversely, assume that \(\mu^\varepsilon\) is an \(\varepsilon\)-MFE. Then
\[
    \int_\Omega J(\mu^\varepsilon,y)\,\mu^\varepsilon(dy)
    \le
    \min_{x\in\Omega}J(\mu^\varepsilon,x)+\varepsilon.
\]
For arbitrary \(\eta\in\mathcal P(\Omega)\),
\[
    \min_{x\in\Omega}J(\mu^\varepsilon,x)
    \le
    \int_\Omega J(\mu^\varepsilon,y)\,\eta(dy).
\]
Therefore
\[
    \int_\Omega J(\mu^\varepsilon,y)\,\mu^\varepsilon(dy)
    \le
    \int_\Omega J(\mu^\varepsilon,y)\,\eta(dy)+\varepsilon.
\]
Rearranging gives
\[
    \langle J(\mu^\varepsilon,\cdot),\eta-\mu^\varepsilon\rangle
    \ge
    -\varepsilon.
\]
Since \(\eta\in\mathcal P(\Omega)\) was arbitrary, \(\mu^\varepsilon\)
satisfies the \(\varepsilon\)-VI.
\end{proof}

\subsection{Proof of Proposition~\ref{prop:mvi-implies-vi}}
\begin{proof}
Fix \(\eta\in\mathcal P(\Omega)\). For \(t\in(0,1]\), set
\[
    \rho_t:=(1-t)\mu+t\eta.
\]
Since \(\mathcal P(\Omega)\) is convex, \(\rho_t\in\mathcal P(\Omega)\).
Applying the Minty VI with \(\rho_t\) gives
\[
    \langle J(\rho_t,\cdot),\rho_t-\mu\rangle\ge0.
\]
Since
\[
    \rho_t-\mu=t(\eta-\mu),
\]
we get
\[
    \langle J(\rho_t,\cdot),\eta-\mu\rangle\ge0.
\]
By the continuity assumption, letting \(t\downarrow0\) gives
\[
    \langle J(\mu,\cdot),\eta-\mu\rangle\ge0.
\]
Since \(\eta\) was arbitrary, \(\mu\) satisfies the exact VI.
\end{proof}

\subsection{Proof of Proposition~\ref{prop:eps-vi-implies-eps-mvi}}
\begin{proof}
Fix \(\eta\in\mathcal P(\Omega)\). By Lasry--Lions monotonicity,
\[
    \langle J(\eta,\cdot)-J(\mu^\varepsilon,\cdot),
    \eta-\mu^\varepsilon\rangle
    \ge0.
\]
Therefore
\[
\begin{aligned}
    \langle J(\eta,\cdot),\eta-\mu^\varepsilon\rangle
    =
    \langle J(\mu^\varepsilon,\cdot),\eta-\mu^\varepsilon\rangle
    +
    \langle J(\eta,\cdot)-J(\mu^\varepsilon,\cdot),
    \eta-\mu^\varepsilon\rangle
    \ge
    -\varepsilon.
\end{aligned}
\]
Since \(\eta\) was arbitrary, the claim follows.
\end{proof}

\subsection{Proof of Proposition~\ref{prop:eps-mvi-implies-delta-vi}}
\begin{proof}
We follow the main argument of
\cite[Proposition~3.2(i)]{bigi2023approximate},
with modifications needed to accommodate the measure-space setting.

Suppose, for contradiction, that \(\mu^\varepsilon\) does not satisfy the
\(\delta\)-VI. Then there exists \(\widetilde\eta\in\mathcal P(\Omega)\) such
that
\[
    \langle J(\mu^\varepsilon,\cdot),
    \widetilde\eta-\mu^\varepsilon\rangle
    <
    -\delta.
\]
Set
\[
    t_0:=\left(\frac{\varepsilon}{2L_mD_\Omega}\right)^{1/2}\in(0,1],
    \qquad
    \rho_{t_0}:=(1-t_0)\mu^\varepsilon+t_0\widetilde\eta.
\]
Since \(\mathcal P(\Omega)\) is convex, \(\rho_{t_0}\in\mathcal P(\Omega)\).
Applying the \(\varepsilon\)-Minty VI with \(\eta=\rho_{t_0}\), we get
\[
    \langle J(\rho_{t_0},\cdot),\rho_{t_0}-\mu^\varepsilon\rangle
    \ge
    -\varepsilon.
\]
Since
\[
    \rho_{t_0}-\mu^\varepsilon
    =
    t_0(\widetilde\eta-\mu^\varepsilon),
\]
this becomes
\[
    t_0
    \langle J(\rho_{t_0},\cdot),
    \widetilde\eta-\mu^\varepsilon\rangle
    \ge
    -\varepsilon.
\]
Therefore
\[
\begin{aligned}
-\varepsilon
&\le
t_0
\langle J(\rho_{t_0},\cdot),
\widetilde\eta-\mu^\varepsilon\rangle
=
t_0
\langle J(\mu^\varepsilon,\cdot),
\widetilde\eta-\mu^\varepsilon\rangle
+
t_0
\langle J(\rho_{t_0},\cdot)-J(\mu^\varepsilon,\cdot),
\widetilde\eta-\mu^\varepsilon\rangle.
\end{aligned}
\]
Using the strict violation and Assumption~\ref{ass:W1-lip}, we obtain
\[
\begin{aligned}
-\varepsilon
&<
-\delta t_0
+
t_0
\left|
\langle J(\rho_{t_0},\cdot)-J(\mu^\varepsilon,\cdot),
\widetilde\eta-\mu^\varepsilon\rangle
\right| \\
&\le
-\delta t_0
+
2t_0
\|J(\rho_{t_0},\cdot)-J(\mu^\varepsilon,\cdot)\|_\infty \\
&\le
-\delta t_0
+
2L_m t_0 W_1(\rho_{t_0},\mu^\varepsilon).
\end{aligned}
\]
Moreover,
\[
    W_1(\rho_{t_0},\mu^\varepsilon)
    \le
    t_0 W_1(\widetilde\eta,\mu^\varepsilon)
    \le
    t_0D_\Omega.
\]
Hence
\[
    -\varepsilon
    <
    -\delta t_0
    +
    2L_mD_\Omega t_0^2.
\]
By the definition of \(t_0\),
\[
    2L_mD_\Omega t_0^2=\varepsilon.
\]
Also, by the definition of \(\delta\),
\[
    \delta t_0=2\varepsilon.
\]
Therefore
\[
    -\varepsilon
    <
    -2\varepsilon+\varepsilon
    =
    -\varepsilon,
\]
which is impossible. Hence \(\mu^\varepsilon\) satisfies the \(\delta\)-VI.
\end{proof}

\section{Proofs of Section~\ref{sec:strong-LL-mono}}
\subsection{Proof of Theorem~\ref{thm:finite-mesh-last-iterate}}

We first prove a lemma. 
The second estimate below is a local version of the standard
finite-alphabet reverse Pinsker inequality
\cite[Lemma~6.3]{csiszar2006context};
see also \cite{sason2015upper} for sharper finite-alphabet bounds.
The first estimate extends this type of control to the case where
$\mu^\ast$ need not have full support, at the price of a linear rather
than quadratic dependence on $\|\mu-\mu^\ast\|_1$.

\begin{lemma}
\label{lem:local-reverse-kl-finite-grid}
Let
\(
    \mathcal X=\{x_1,\dots,x_n\}.
\)
Let
\(
    \mu^\ast=\sum_{i=1}^n p_i^\ast\delta_{x_i}
    \in \mathcal P(\mathcal X),
\)
and define \(S_\ast:=\{i:p_i^\ast>0\}\) and \(m_\ast:=\min_{i\in S_\ast}p_i^\ast>0\).
Then the following two estimates hold.

\begin{enumerate}
    \item
For every \(\mu\in\mathcal P(\mathcal X)\) with full support satisfying
\[
    \|\mu-\mu^\ast\|_1\le \frac{m_\ast}{2},
\]
one has
\[
    D_{\rm KL}(\mu^\ast\|\mu)
    \le
    2\|\mu-\mu^\ast\|_1.
\]

\item 
If, in addition, \(\mu^\ast\) has full support on \(\mathcal X\), namely
\[
    m_\ast=\min_{1\le i\le n}p_i^\ast>0,
\]
then, for every \(\mu\in\mathcal P(\mathcal X)\) with full support satisfying
\[
    \|\mu-\mu^\ast\|_1\le \frac{m_\ast}{2},
\]
one has the sharper quadratic estimate
\[
    D_{\rm KL}(\mu^\ast\|\mu)
    \le
    \frac{1}{m_\ast}
    \|\mu-\mu^\ast\|_1^2.
\]

\end{enumerate}

\end{lemma}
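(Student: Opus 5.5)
Both estimates follow from a coordinatewise analysis of
\[
D_{\rm KL}(\mu^\ast\|\mu)=\sum_{i\in S_\ast}p_i^\ast\log\frac{p_i^\ast}{p_i},
\]
where \(\mu=\sum_i p_i\delta_{x_i}\). The plan is to bound each ratio \(p_i^\ast/p_i\) using the smallness of \(\|\mu-\mu^\ast\|_1\) relative to \(m_\ast\). Write \(\varepsilon:=\|\mu-\mu^\ast\|_1\le m_\ast/2\) and \(d_i:=p_i^\ast-p_i\), so that \(\sum_i|d_i|=\varepsilon\).

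For \(i\in S_\ast\) we have \(|d_i|\le\varepsilon\le m_\ast/2\le p_i^\ast/2\). Hence \(p_i\ge p_i^\ast/2\), and the ratio \(u_i:=d_i/p_i\) satisfies \(|u_i|\le 2|d_i|/p_i^\ast\le 1\).

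\textbf{Part 1.} Use \(\log(1+u)\le u\):
\[
D_{\rm KL}(\mu^\ast\|\mu)=\sum_{i\in S_\ast}p_i^\ast\log\Bigl(1+\frac{d_i}{p_i}\Bigr)\le\sum_{i\in S_\ast}p_i^\ast\frac{|d_i|}{p_i}\le 2\sum_{i\in S_\ast}|d_i|\le 2\varepsilon,
\]
where the middle step uses \(p_i^\ast/p_i\le 2\). This is all that is needed for Part 1; there is no cancellation to track.

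\textbf{Part 2.} Here \(S_\ast\) is all of \(\{1,\dots,n\}\), so \(\sum_i d_i=0\), and this cancellation kills the linear term. Write
\[
p_i^\ast\log\frac{p_i^\ast}{p_i}=p_i(1+u_i)\log(1+u_i),
\]
and use the elementary inequality \((1+u)\log(1+u)\le u+u^2\) for \(u\ge-1\). This holds because the function \(u+u^2-(1+u)\log(1+u)\) vanishes at \(0\), has derivative \(2u-\log(1+u)\), and is convex. Multiplying by \(p_i\) and summing gives
\[
D_{\rm KL}(\mu^\ast\|\mu)\le\sum_i p_iu_i+\sum_i p_iu_i^2=\sum_i d_i+\sum_i\frac{d_i^2}{p_i}=\sum_i\frac{d_i^2}{p_i}.
\]
Since \(p_i\ge m_\ast/2\), this is at most
\[
\frac{2}{m_\ast}\sum_i d_i^2\le\frac{2}{m_\ast}\varepsilon^2 .
\]

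\textbf{The constant.} This argument yields \(2/m_\ast\), while the statement claims \(1/m_\ast\); closing that gap is the main obstacle. The first tool to try is the sharper \(\chi^2\) bound \(D_{\rm KL}\le\log(1+\chi^2)\le\chi^2=\sum_i d_i^2/p_i\), which still loses a factor through \(p_i\ge p_i^\ast/2\). Instead, use \(\sum_i d_i^2\le\varepsilon^2/2\). This holds because \(\sum_i d_i=0\) forces the positive and negative parts of \(d\) to each carry mass \(\varepsilon/2\), so
\[
\sum_i d_i^2\le\max_i|d_i|\sum_i|d_i|\le\frac{\varepsilon}{2}\cdot\varepsilon .
\]
Together with \(p_i\ge m_\ast/2\), this gives exactly \(D_{\rm KL}(\mu^\ast\|\mu)\le\varepsilon^2/m_\ast\). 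Only the zero-sum step needs checking with care.
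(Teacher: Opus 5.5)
Your proof is correct and essentially the paper's: both reduce to the $\chi^2$-type quantity $\sum_i d_i^2/p_i$ (the paper gets there directly from $\log x\le x-1$), use $p_i\ge m_\ast/2$ on $S_\ast$, and obtain the constant $1/m_\ast$ in Part 2 from the zero-sum bound $\sum_i d_i^2\le\frac12\bigl(\sum_i|d_i|\bigr)^2$; in Part 1 your termwise bound $p_i^\ast d_i/p_i\le 2|d_i|$ replaces the paper's $\frac{2}{m_\ast}r^2+r\le 2r$ and gives the same result. One small correction: $u+u^2-(1+u)\log(1+u)$ is not convex on $(-1,-\frac12)$, since its second derivative is $2-\frac{1}{1+u}$, but no convexity is needed, because $(1+u)\log(1+u)\le u+u^2$ is just $\log(1+u)\le u$ multiplied by $1+u\ge 0$.
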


\begin{proof}
Let
\[
    \mu=\sum_{i=1}^n p_i\delta_{x_i}\in\mathcal P(\mathcal X)
\]
with $p_i>0$ for all $i$.
Using \(\log u\le u-1\) and \( \sum_{i=1}^n
    p_i^\ast= \sum_{i=1}^n
    p_i=1\), we get 
\[
    D_{\rm KL}(p^\ast\|p)
    \le
    \sum_{i=1}^n
    p_i^\ast \left(\frac{p_i^\ast}{p_i}-1\right)
    =
    \sum_{i=1}^n
    \frac{(p_i^\ast-p_i)^2}{p_i}.
\]

Let
\[
    r:=\|\mu-\mu^\ast\|_1.
\]
If \(r\le m_\ast/2\), then for every \(i\in S_\ast\),
\[
    p_i\ge p_i^\ast-r\ge \frac{m_\ast}{2}.
\]
Therefore,
\[
\begin{aligned}
    D_{\rm KL}(\mu^\ast\|\mu)
    &\le
    \sum_{i\in S_\ast}
    \frac{(p_i^\ast-p_i)^2}{p_i}
    +
    \sum_{i\notin S_\ast}
    \frac{p_i^2}{p_i}  \\
    &\le
    \frac{2}{m_\ast}
    \sum_{i\in S_\ast}(p_i^\ast-p_i)^2
    +
    \sum_{i\notin S_\ast}p_i  \\
    &\le
    \frac{2}{m_\ast}r^2+r.
\end{aligned}
\]
If \(r\le m_\ast/2\), this implies
\[
    D_{\rm KL}(\mu^\ast\|\mu)
    \le
    2r.
\]

If \(\mu^\ast\) has full support, then \(S_\ast=\{1,\dots,n\}\) and 
\(p_i\ge \frac{m_\ast}{2}\) holds for all $i$.
Hence
\[
    D_{\rm KL}(\mu^\ast\|\mu)
    \le
    \sum_{i=1}^n
    \frac{(p_i^\ast-p_i)^2}{p_i}
    \le
    \frac{2}{m_\ast}
    \sum_{i=1}^n(p_i^\ast-p_i)^2
    \le
    \frac{1}{m_\ast}
    \left(\sum_{i=1}^n |p_i^\ast-p_i|\right)^2
    =
    \frac{1}{m_\ast}\|\mu-\mu^\ast\|_1^2,
\]
where the penultimate inequality uses
\(\sum_{i=1}^n(p_i^\ast-p_i)=0\).
\end{proof}

\begin{proof}[{\bf Proof of Theorem~\ref{thm:finite-mesh-last-iterate}}]
Existence of \(\mu_h^\ast\) follows from
Corollary~\ref{cor:cluster-point-mintyVI} and
Proposition~\ref{prop:mvi-implies-vi}. Uniqueness follows from strong
Lasry--Lions monotonicity.

Applying Lemma~\ref{lem:one-step-Mirror-Prox} with
\(\eta_h=\mu_h^\ast\), we get
\begin{equation}\label{eq:original-estimate}
    \lambda
    \langle J(\nu_{k,h},\cdot),\nu_{k,h}-\mu_h^\ast\rangle
    \le
    D_{k,h}-D_{k+1,h}
    -
    (1-\lambda L_mD_\Omega)
    \left[
        D_{\rm KL}(\nu_{k,h}\|\mu_{k,h})
        +
        D_{\rm KL}(\mu_{k+1,h}\|\nu_{k,h})
    \right].
\end{equation}
Since \(\mu_h^\ast\) solves the finite-mesh VI,
\[
    \langle J(\mu_h^\ast,\cdot),\nu_{k,h}-\mu_h^\ast\rangle\ge0.
\]
Combining this inequality with \(W_1\)-strong monotonicity yields
\[
    \langle J(\nu_{k,h},\cdot),\nu_{k,h}-\mu_h^\ast\rangle
    \ge
    \alpha W_1(\nu_{k,h},\mu_h^\ast)^2.
\]
Substituting this into \eqref{eq:original-estimate} and dropping the nonnegative term
\(D_{\rm KL}(\mu_{k+1,h}\|\nu_{k,h})\), we obtain
\begin{equation}\label{eq:estimate-after-mono}
    \lambda\alpha W_1(\nu_{k,h},\mu_h^\ast)^2
    +
    (1-\lambda L_mD_\Omega) D_{\rm KL}(\nu_{k,h}\|\mu_{k,h})
    \le
    D_{k,h}-D_{k+1,h}.
\end{equation}
Also, Pinsker's inequality and
\[
    W_1(\nu_{k,h},\mu_{k,h})
    \le
    D_\Omega\|\nu_{k,h}-\mu_{k,h}\|_1
\]
give
\[
    D_{\rm KL}(\nu_{k,h}\|\mu_{k,h})
    \ge
    \frac{1}{2D_\Omega^2}
    W_1(\nu_{k,h},\mu_{k,h})^2.
\]
Hence, \eqref{eq:estimate-after-mono} becomes
\[
\begin{aligned}
    \lambda\alpha W_1(\nu_{k,h},\mu_h^\ast)^2
    +
    \frac{1-\lambda L_mD_\Omega}{2D_\Omega^2}
    W_1(\nu_{k,h},\mu_{k,h})^2
    \le
    D_{k,h}-D_{k+1,h}.
\end{aligned}
\]
Since
\[
    W_1(\mu_{k,h},\mu_h^\ast)^2
    \le
    2W_1(\nu_{k,h},\mu_h^\ast)^2
    +
    2W_1(\nu_{k,h},\mu_{k,h})^2,
\]
we obtain \eqref{eq:finite-mesh-W1-descent}. In particular,
\(D_{k,h}\) is nonincreasing, and
\[
    W_1(\mu_{k,h},\mu_h^\ast)\to0.
\]

Next, we prove the last-iterate estimates. On the finite mesh,
\[
    W_1(\rho,\sigma)
    \ge
    \frac12
    \delta_h
    \|\rho-\sigma\|_1,
    \qquad
    \rho,\sigma\in\mathcal P(X_h).
\]
Using this estimate and Pinsker's inequality in \eqref{eq:estimate-after-mono} gives
\[
\begin{aligned}
    \frac{\lambda\alpha}{4}
    \delta_h^2
    \|\nu_{k,h}-\mu_h^\ast\|_1^2
    +
    \frac{1-\lambda L_mD_\Omega}{2}
    \|\nu_{k,h}-\mu_{k,h}\|_1^2
    \le
    D_{k,h}-D_{k+1,h}.
\end{aligned}
\]
Since
\[
    \|\mu_{k,h}-\mu_h^\ast\|_1^2
    \le
    2\|\nu_{k,h}-\mu_h^\ast\|_1^2
    +
    2\|\nu_{k,h}-\mu_{k,h}\|_1^2,
\]
we get
\begin{equation}\label{eq:finite-mesh-L1-descent}
    \gamma_h\|\mu_{k,h}-\mu_h^\ast\|_1^2
    \le
    D_{k,h}-D_{k+1,h}.
\end{equation}

Summing \eqref{eq:finite-mesh-L1-descent} from \(0\) to
\[
    \left\lceil
        \frac{256D_{0,h}}{\gamma_h (m_h^\ast)^4}
    \right\rceil
    -1
\]
gives an index \(j\) with
\[
    0\le j
    \le
    \left\lceil
        \frac{256D_{0,h}}{\gamma_h (m_h^\ast)^4}
    \right\rceil
\]
and
\[
    \|\mu_{j,h}-\mu_h^\ast\|_1^2
    \le
    \frac{(m_h^\ast)^4}{256}.
\]
Thus
\[
    \|\mu_{j,h}-\mu_h^\ast\|_1
    \le
    \frac{(m_h^\ast)^2}{16}
    \le
    \frac{m_h^\ast}{2}.
\]
Since \(\mu_{0,h}\) has full support, so does \(\mu_{j,h}\). The local
reverse-KL estimate from Lemma~\ref{lem:local-reverse-kl-finite-grid} gives
\[
    D_{j,h}
    \le
    2\|\mu_{j,h}-\mu_h^\ast\|_1
    \le
    \frac{(m_h^\ast)^2}{8}.
\]
Since \(D_{k,h}\) is nonincreasing, for every \(k\ge j\),
\[
    D_{k,h}\le D_{j,h}\le \frac{(m_h^\ast)^2}{8}.
\]
By Pinsker's inequality,
\[
    \|\mu_{k,h}-\mu_h^\ast\|_1
    \le
    \sqrt{2D_{k,h}}
    \le
    \frac{m_h^\ast}{2},
    \qquad k\ge j.
\]
Hence the local reverse-KL estimate applies for every \(k\ge j\), and
\[
    D_{k,h}\le 2\|\mu_{k,h}-\mu_h^\ast\|_1,
    \qquad k\ge j.
\]
Combining this with \eqref{eq:finite-mesh-L1-descent}, we obtain
\[
    D_{k+1,h}
    \le
    D_{k,h}
    -
    \frac{\gamma_h}{4}D_{k,h}^2,
    \qquad k\ge j.
\]
Thus, unless \(D_{k,h}=0\),
\[
    \frac1{D_{k+1,h}}
    \ge
    \frac1{D_{k,h}}+\frac{\gamma_h}{4}.
\]
Iterating from \(j\) and using
\[
    D_{j,h}^{-1}\ge \frac{8}{(m_h^\ast)^2}
\]
gives, with \(k_{0,h}:=j\),
\[
    D_{k,h}
    \le
    \frac{1}{
        \frac{8}{(m_h^\ast)^2}
        +
        \frac{\gamma_h}{4}(k-k_{0,h})
    },
    \qquad
    k\ge k_{0,h}.
\]
Pinsker's inequality and
\[
    W_1(\mu_{k,h},\mu_h^\ast)
    \le
    D_\Omega\|\mu_{k,h}-\mu_h^\ast\|_1
\]
give the stated \(W_1\)-estimate.

Now assume that \(\mu_h^\ast\) has full support on \(X_h\). Summing
\eqref{eq:finite-mesh-L1-descent} from \(0\) to
\[
    \left\lceil
        \frac{8D_{0,h}}{\gamma_h (m_h^\ast)^3}
    \right\rceil
    -1
\]
gives an index \(j\) with
\[
    0\le j
    \le
    \left\lceil
        \frac{8D_{0,h}}{\gamma_h (m_h^\ast)^3}
    \right\rceil
\]
and
\[
    \|\mu_{j,h}-\mu_h^\ast\|_1^2
    \le
    \frac{(m_h^\ast)^3}{8}.
\]
In particular,
\[
    \|\mu_{j,h}-\mu_h^\ast\|_1\le \frac{m_h^\ast}{2}.
\]
The local quadratic reverse-KL estimate from Lemma~\ref{lem:local-reverse-kl-finite-grid} gives
\[
    D_{j,h}
    \le
    \frac{1}{m_h^\ast}
    \|\mu_{j,h}-\mu_h^\ast\|_1^2
    \le
    \frac{(m_h^\ast)^2}{8}.
\]
As before, monotonicity of \(D_{k,h}\) and Pinsker's inequality imply
\[
    \|\mu_{k,h}-\mu_h^\ast\|_1\le \frac{m_h^\ast}{2},
    \qquad k\ge j.
\]
Hence the local quadratic reverse-KL estimate applies for all \(k\ge j\):
\[
    D_{k,h}
    \le
    \frac{1}{m_h^\ast}
    \|\mu_{k,h}-\mu_h^\ast\|_1^2.
\]
Combining this with \eqref{eq:finite-mesh-L1-descent} yields
\[
    D_{k+1,h}
    \le
    \left(1-\gamma_h m_h^\ast\right)D_{k,h},
    \qquad k\ge j.
\]
Thus, with \(k_{0,h}:=j\),
\[
    D_{k,h}
    \le
    \frac{(m_h^\ast)^2}{8}
    \left(1-\gamma_h m_h^\ast\right)^{k-k_{0,h}},
    \qquad
    k\ge k_{0,h}.
\]
Finally, Pinsker's inequality and
\(W_1\le D_\Omega\|\cdot\|_1\) give
\[
    W_1(\mu_{k,h},\mu_h^\ast)
    \le
    D_\Omega
    \frac{m_h^\ast}{2}
    \left(1-\gamma_h m_h^\ast\right)^{(k-k_{0,h})/2}.
\]

It remains to prove the bound on \(D_{0,h}\). Writing
\(\mu_h^\ast=\sum_i p_i^\ast\delta_{x_i}\) and
\(\mu_{0,h}=\sum_i p_{0,i}\delta_{x_i}\), we have
\[
\begin{aligned}
    D_{0,h}
    &=
    \sum_{i:p_i^\ast>0}
    p_i^\ast\log\frac{p_i^\ast}{p_{0,i}}  \\
    &=
    \sum_{i:p_i^\ast>0}p_i^\ast\log p_i^\ast
    -
    \sum_{i:p_i^\ast>0}p_i^\ast\log p_{0,i}  \\
    &\le
    -\sum_{i:p_i^\ast>0}p_i^\ast\log p_{0,i}  \\
    &\le
    -\log\left(
    \min_{x\in\supp\mu_h^\ast}\mu_{0,h}(x)
    \right).
\end{aligned}
\]
\end{proof}

\subsection{Proof of Proposition~\ref{prop:direct-mesh-stability}}
\begin{proof}
Let
\[
    \pi_h:=Q_{h\#}\mu^\ast\in\mathcal P(X_h).
\]
By the strong \(W_1\)-monotonicity assumption,
\[
\begin{aligned}
    \alpha W_1(\mu_h^\ast,\mu^\ast)^2
    &\le
    \langle J(\mu_h^\ast,\cdot)-J(\mu^\ast,\cdot),
    \mu_h^\ast-\mu^\ast\rangle \\
    &=
    \langle J(\mu_h^\ast,\cdot),\mu_h^\ast-\mu^\ast\rangle
    +
    \langle J(\mu^\ast,\cdot),\mu^\ast-\mu_h^\ast\rangle.
\end{aligned}
\]
Since \(\mu^\ast\) solves the continuous VI, using \(\eta=\mu_h^\ast\) gives
\[
    \langle J(\mu^\ast,\cdot),\mu_h^\ast-\mu^\ast\rangle\ge0.
\]
Therefore
\[
    \alpha W_1(\mu_h^\ast,\mu^\ast)^2
    \le
    \langle J(\mu_h^\ast,\cdot),\mu_h^\ast-\mu^\ast\rangle.
\]
Now decompose
\[
    \langle J(\mu_h^\ast,\cdot),\mu_h^\ast-\mu^\ast\rangle
    =
    \langle J(\mu_h^\ast,\cdot),\mu_h^\ast-\pi_h\rangle
    +
    \langle J(\mu_h^\ast,\cdot),\pi_h-\mu^\ast\rangle.
\]
Since \(\mu_h^\ast\) solves the finite-mesh VI and
\(\pi_h\in\mathcal P(X_h)\),
\[
    \langle J(\mu_h^\ast,\cdot),\pi_h-\mu_h^\ast\rangle\ge0.
\]
Thus
\[
    \alpha W_1(\mu_h^\ast,\mu^\ast)^2
    \le
    \langle J(\mu_h^\ast,\cdot),\pi_h-\mu^\ast\rangle.
\]
By the spatial Lipschitz bound and Kantorovich--Rubinstein duality,
\[
    \left|
    \langle J(\mu_h^\ast,\cdot),\pi_h-\mu^\ast\rangle
    \right|
    \le
    L_x W_1(\pi_h,\mu^\ast).
\]
Finally, since \(|Q_h(x)-x|\le h\),
\[
    W_1(\pi_h,\mu^\ast)
    =
    W_1(Q_{h\#}\mu^\ast,\mu^\ast)
    \le h.
\]
Therefore
\[
    \alpha W_1(\mu_h^\ast,\mu^\ast)^2
    \le
    L_x h,
\]
which proves the claim.
\end{proof}

\subsection{Proof of Theorem~\ref{thm:finite-iteration-continuous-error}}
\begin{proof}
For item \(1\), the triangle inequality gives
\[
    W_1(\mu_{k,h},\mu^\ast)
    \le
    W_1(\mu_{k,h},\mu_h^\ast)
    +
    W_1(\mu_h^\ast,\mu^\ast).
\]
The two estimates then follow directly from Theorem~\ref{thm:finite-mesh-last-iterate} and Proposition~\ref{prop:direct-mesh-stability}.

For item \(2\), summing \eqref{eq:finite-mesh-W1-descent} from \(0\) to
\(K-1\) gives
\begin{equation}\label{eq:W1-sum}
    \sum_{k=0}^{K-1}W_1(\mu_{k,h},\mu_h^\ast)^2
    \le
    \frac{D_{0,h}}{\beta}.
\end{equation}
Therefore
\[
    \min_{0\le k\le K-1}
    W_1(\mu_{k,h},\mu_h^\ast)
    \le
    \sqrt{
        \frac{D_{0,h}}{
            \beta K
        }
    }.
\]
Using the triangle inequality and Proposition~\ref{prop:direct-mesh-stability} proves the first bound in item \(2\). Under the displayed choices of \(h\) and \(K\), the optimization and mesh-approximation terms are each at most \(\varepsilon/2\), which proves the stated consequence.

For item \(3\), by convexity of \(W_1\),
\[
\begin{aligned}
    W_1(\bar\mu_{K,h},\mu_h^\ast)
    &=
    W_1\left(
        \frac1K\sum_{k=0}^{K-1}\mu_{k,h},
        \mu_h^\ast
    \right) \\
    &\le
    \frac1K\sum_{k=0}^{K-1}
    W_1(\mu_{k,h},\mu_h^\ast) \\
    &\le
    \left(
        \frac1K\sum_{k=0}^{K-1}
        W_1(\mu_{k,h},\mu_h^\ast)^2
    \right)^{1/2}.
\end{aligned}
\]
Using \eqref{eq:W1-sum}, we obtain
\[
    W_1(\bar\mu_{K,h},\mu_h^\ast)
    \le
    \sqrt{
        \frac{D_{0,h}}{
            \beta K
        }
    }.
\]
The triangle inequality and Proposition~\ref{prop:direct-mesh-stability} prove the first bound in item \(3\). Under the displayed choices of \(h\) and \(K\), each error term is at most \(\varepsilon/2\), which proves the stated consequence.
\end{proof}

\section{Proofs of Section~\ref{sec:MFE-selection}}
\begin{lemma}
\label{lem:entropy-consistency-mesh-projection}
For every \(\mu\in\mathcal P(\Omega)\),
\[
    Q_{h\#}\mu\Rightarrow \mu.
\]
Moreover,
\[
    D_{\rm KL}(Q_{h\#}\mu\|Q_{h\#}\rho)
    \to
    D_{\rm KL}(\mu\|\rho),
\]
with the convention that both sides may be \(+\infty\).
\end{lemma}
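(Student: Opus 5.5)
The weak convergence is immediate: since $|Q_h(x)-x|\le h$, the coupling $(\mathrm{id},Q_h)_\#\mu$ gives $W_1(Q_{h\#}\mu,\mu)\le h\to0$. On compact $\Omega$, $W_1$ metrizes weak convergence, so $Q_{h\#}\mu\Rightarrow\mu$. The same holds for $\rho_h=Q_{h\#}\rho\Rightarrow\rho$.

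For the entropy statement I would prove matching $\liminf$ and $\limsup$ bounds.

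\emph{Upper bound.} The data processing inequality for the measurable map $Q_h$ gives
\[
D_{\rm KL}(Q_{h\#}\mu\|Q_{h\#}\rho)\le D_{\rm KL}(\mu\|\rho)
\]
for every $h$, hence $\limsup_h D_{\rm KL}(Q_{h\#}\mu\|Q_{h\#}\rho)\le D_{\rm KL}(\mu\|\rho)$. Concretely, this follows from the log-sum inequality applied on each fiber $Q_h^{-1}(\{x_i\})$, or from Jensen applied to the conditional expectation of $d\mu/d\rho$ given $\sigma(Q_h)$.

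\emph{Lower bound.} I would use joint weak lower semicontinuity of relative entropy on $\mathcal P(\Omega)\times\mathcal P(\Omega)$, which follows from the Donsker--Varadhan representation
\[
D_{\rm KL}(\mu\|\rho)=\sup_{\varphi\in C(\Omega)}\Bigl\{\langle\varphi,\mu\rangle-\log\langle e^\varphi,\rho\rangle\Bigr\},
\]
a supremum of weakly continuous functionals. Since $(Q_{h\#}\mu,Q_{h\#}\rho)\Rightarrow(\mu,\rho)$ by the first part, we get
\[
\liminf_h D_{\rm KL}(Q_{h\#}\mu\|Q_{h\#}\rho)\ge D_{\rm KL}(\mu\|\rho).
\]
Combining the two bounds gives convergence, including the case where both sides equal $+\infty$: there the lower bound alone forces divergence, and the upper bound is vacuous.

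The only step needing care is stating the data processing inequality correctly when $\mu\not\ll\rho$ (trivial, since the right-hand side is $+\infty$) and when the fibers have zero $\rho$-mass. The paper's standing convention $\rho_h(x)>0$, or the convention $0\log(0/0)=0$, handles the latter. I expect no real obstacle; the main point is to invoke lower semicontinuity jointly in both arguments, since the reference measure $\rho_h$ also moves with $h$.
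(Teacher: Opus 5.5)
Your proposal is correct and matches the paper's proof: the data processing inequality gives the $\limsup$ bound, and the Donsker--Varadhan variational formula gives the $\liminf$ bound. The only difference is packaging. You phrase the lower bound as joint weak lower semicontinuity applied to $(Q_{h\#}\mu,Q_{h\#}\rho)\Rightarrow(\mu,\rho)$, whereas the paper fixes $\varphi\in C(\Omega)$ and passes to the limit in $\int\varphi\circ Q_h\,d\mu$ and $\int e^{\varphi\circ Q_h}\,d\rho$ directly, which is the same argument unpacked.
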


\begin{proof}
The weak convergence \(Q_{h\#}\mu\Rightarrow\mu\) follows from
\[
    |Q_h(x)-x|\le h
\]
and compactness of \(\Omega\).

For the entropy statement, the data-processing inequality gives
\[
    D_{\rm KL}(Q_{h\#}\mu\|Q_{h\#}\rho)
    \le
    D_{\rm KL}(\mu\|\rho).
\]
Hence
\[
    \limsup_{h\to0}
    D_{\rm KL}(Q_{h\#}\mu\|Q_{h\#}\rho)
    \le
    D_{\rm KL}(\mu\|\rho).
\]

For the reverse inequality, use the variational formula
\[
    D_{\rm KL}(\mu\|\rho)
    =
    \sup_{\varphi\in C(\Omega)}
    \left\{
        \int_\Omega \varphi\,d\mu
        -
        \log\int_\Omega e^\varphi\,d\rho
    \right\}.
\]
Fix \(\varphi\in C(\Omega)\). 
Then
\[
    \int_{X_h}\varphi\,d(Q_{h\#}\mu)
    =
    \int_\Omega \varphi(Q_h(x))\,\mu(dx)
    \to
    \int_\Omega \varphi(x)\,\mu(dx),
\]
and similarly
\[
    \int_{X_h}e^{\varphi}\,d(Q_{h\#}\rho)
    =
    \int_\Omega e^{\varphi(Q_h(x))}\,\rho(dx)
    \to
    \int_\Omega e^{\varphi(x)}\,\rho(dx).
\]
Therefore,
\[
\begin{aligned}
    \liminf_{h\to0}
    D_{\rm KL}(Q_{h\#}\mu\|Q_{h\#}\rho)
    &\ge
    \int_\Omega \varphi\,d\mu
    -
    \log\int_\Omega e^\varphi\,d\rho.
\end{aligned}
\]
Taking the supremum over \(\varphi\in C(\Omega)\) gives
\[
    \liminf_{h\to0}
    D_{\rm KL}(Q_{h\#}\mu\|Q_{h\#}\rho)
    \ge
    D_{\rm KL}(\mu\|\rho).
\]
Together with the limsup inequality, this proves the claim.
\end{proof}

\begin{lemma}
\label{lem:projected-equilibrium-approx-discrete}
Suppose Assumptions~\ref{ass:W1-lip} and~\ref{ass:spatial-lipschitz} hold.
Let \(\mu^\ast\in\mathcal P(\Omega)\) solve the VI:
\[
    \langle J(\mu^\ast,\cdot),\eta-\mu^\ast\rangle\ge0,
    \qquad
    \forall \eta\in\mathcal P(\Omega).
\]
Set
\[
    \pi_h:=Q_{h\#}\mu^\ast\in\mathcal P(X_h).
\]
Then for every \(\eta_h\in\mathcal P(X_h)\),
\[
    \langle J(\pi_h,\cdot),\eta_h-\pi_h\rangle
    \ge
    -(2L_m+L_x)h.
\]
\end{lemma}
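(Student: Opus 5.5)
The plan is to show that the projected equilibrium \(\pi_h\) is an approximate VI solution on the mesh. We transfer the exact continuous VI for \(\mu^\ast\) to \(\pi_h\) and pay for two perturbations: the measure argument of \(J\), controlled by Assumption~\ref{ass:W1-lip}, and the test measure, controlled by Assumption~\ref{ass:spatial-lipschitz}. This mirrors the proof of Theorem~\ref{thm:quantitative-mesh-approximate-MFE}. The basic fact we need is \(W_1(\pi_h,\mu^\ast)\le h\), which follows from the coupling \((\mathrm{id},Q_h)_\#\mu^\ast\) and \(|Q_h(x)-x|\le h\).

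Fix \(\eta_h\in\mathcal P(X_h)\subset\mathcal P(\Omega)\). We decompose
\[
\langle J(\pi_h,\cdot),\eta_h-\pi_h\rangle
=
\langle J(\mu^\ast,\cdot),\eta_h-\mu^\ast\rangle
+\langle J(\pi_h,\cdot)-J(\mu^\ast,\cdot),\eta_h-\pi_h\rangle
+\langle J(\mu^\ast,\cdot),\mu^\ast-\pi_h\rangle .
\]
The identity holds because the last two terms add \(J(\pi_h)-J(\mu^\ast)\) paired with \(\eta_h-\pi_h\), together with \(J(\mu^\ast)\) paired with \(\mu^\ast-\pi_h\). We bound the three terms in turn.

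\begin{enumerate}
\item The first term is nonnegative. Indeed, \(\eta_h\) is an admissible test measure in the continuous VI for \(\mu^\ast\).
\item The second term is at least \(-2L_mh\). This follows from \(|\langle f,\xi\rangle|\le\|f\|_\infty\|\xi\|_1\), the bound \(\|\eta_h-\pi_h\|_1\le 2\), and \(\|J(\pi_h,\cdot)-J(\mu^\ast,\cdot)\|_\infty\le L_mW_1(\pi_h,\mu^\ast)\le L_mh\).
\item The third term is at least \(-L_xh\). Since \(J(\mu^\ast,\cdot)\) is \(L_x\)-Lipschitz, Kantorovich--Rubinstein duality gives \(|\langle J(\mu^\ast,\cdot),\mu^\ast-\pi_h\rangle|\le L_xW_1(\mu^\ast,\pi_h)\le L_xh\).
\end{enumerate}

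Adding the three bounds gives \(\langle J(\pi_h,\cdot),\eta_h-\pi_h\rangle\ge-(2L_m+L_x)h\).

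The only delicate point is choosing a decomposition in which the continuous VI is tested against a legitimate measure. Here \(\eta_h\) itself works, since \(X_h\subset\Omega\). The remaining error terms then split cleanly into one measure-argument perturbation and one spatial perturbation. No step appears to be a real obstacle.
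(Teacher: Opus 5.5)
Your proposal is correct and follows the paper's own proof: it uses the identical three-term decomposition and tests the continuous VI at \(\eta_h\in\mathcal P(X_h)\subset\mathcal P(\Omega)\). The other two terms are bounded exactly as in the paper, by \(2L_m h\) via the \(W_1\)-Lipschitz assumption and by \(L_x h\) via spatial Lipschitz continuity with Kantorovich--Rubinstein duality, both using \(W_1(\pi_h,\mu^\ast)\le h\).
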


\begin{proof}
Fix \(\eta_h\in\mathcal P(X_h)\). We have
\[
\begin{aligned}
    \langle J(\pi_h,\cdot),\eta_h-\pi_h\rangle
    =
    \langle J(\mu^\ast,\cdot),\eta_h-\mu^\ast\rangle
    +
    \langle J(\pi_h,\cdot)-J(\mu^\ast,\cdot),\eta_h-\pi_h\rangle
    +
    \langle J(\mu^\ast,\cdot),\mu^\ast-\pi_h\rangle.
\end{aligned}
\]
The first term is nonnegative by the VI. The second term satisfies
\[
    \left|
    \langle J(\pi_h,\cdot)-J(\mu^\ast,\cdot),\eta_h-\pi_h\rangle
    \right|
    \le
    2\|J(\pi_h,\cdot)-J(\mu^\ast,\cdot)\|_\infty
    \le
    2L_m W_1(\pi_h,\mu^\ast).
\]
Since \(\pi_h=Q_{h\#}\mu^\ast\) and \(|Q_h(x)-x|\le h\),
\[
    W_1(\pi_h,\mu^\ast)\le h.
\]
For the third term, the spatial Lipschitz bound gives
\[
    \left|
    \langle J(\mu^\ast,\cdot),\mu^\ast-\pi_h\rangle
    \right|
    \le
    L_x W_1(\mu^\ast,\pi_h)
    \le
    L_x h.
\]
Combining the estimates gives
\[
    \langle J(\pi_h,\cdot),\eta_h-\pi_h\rangle
    \ge
    -(2L_m+L_x)h.
\]
\end{proof}

\begin{lemma}
\label{lem:composite-euler-equivalence}
Let \(X_h\) be finite and \(R_h:\mathcal P(X_h)\to \mathbb R\cup\{+\infty\}\) be convex. Suppose that
\(\mu_h^\varepsilon\in\mathcal P(X_h)\) lies in the relative interior of
\(\mathcal P(X_h)\), and that \(R_h\) is differentiable at
\(\mu_h^\varepsilon\) in all feasible directions. Denote its first variation
by
\[
    \frac{\delta R_h}{\delta \mu}
    (\mu_h^\varepsilon,x),
    \qquad x\in X_h.
\]
Then the following two conditions are equivalent.

\begin{enumerate}
\item The composite Tikhonov VI:
\begin{equation}
    \left\langle
        J(\mu_h^\varepsilon,\cdot),
        \eta_h-\mu_h^\varepsilon
    \right\rangle
    +
    \varepsilon
    \left[
        R_h(\eta_h)-R_h(\mu_h^\varepsilon)
    \right]
    \ge 0,
    \qquad
    \forall \eta_h\in\mathcal P(X_h).
\label{eq:composite-vi}
\end{equation}

\item The Euler, or first-order, VI:
\begin{equation}
    \left\langle
        J(\mu_h^\varepsilon,\cdot)
        +
        \varepsilon
        \frac{\delta R_h}{\delta \mu}
        (\mu_h^\varepsilon,\cdot),
        \eta_h-\mu_h^\varepsilon
    \right\rangle
    \ge0,
    \qquad
    \forall \eta_h\in\mathcal P(X_h).
\label{eq:euler-vi}
\end{equation}
\end{enumerate}
Here the first variation is understood modulo additive constants, since
\[
    \langle c,\eta_h-\mu_h^\varepsilon\rangle=0
    \qquad
    \forall c\in\mathbb R,\quad
    \forall \eta_h\in\mathcal P(X_h).
\]
\end{lemma}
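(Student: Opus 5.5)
The plan is to prove the two implications separately, using the classical equivalence between a convex composite minimality condition and its first-order condition. Fix $\eta_h\in\mathcal P(X_h)$ and set $\rho_t:=\mu_h^\varepsilon+t(\eta_h-\mu_h^\varepsilon)$ for $t\in[0,1]$. By convexity of $\mathcal P(X_h)$, $\rho_t\in\mathcal P(X_h)$. Directional differentiability of $R_h$ at $\mu_h^\varepsilon$ in feasible directions gives
\[
\lim_{t\downarrow0}\frac{R_h(\rho_t)-R_h(\mu_h^\varepsilon)}{t}
=
\left\langle \frac{\delta R_h}{\delta\mu}(\mu_h^\varepsilon,\cdot),\eta_h-\mu_h^\varepsilon\right\rangle .
\]
Because $\mu_h^\varepsilon$ lies in the relative interior, every direction $\eta_h-\mu_h^\varepsilon$ is feasible on both sides. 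The pairing is also invariant under additive constants, since $\eta_h-\mu_h^\varepsilon$ has zero total mass, so the choice of representative of the first variation is irrelevant.

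\emph{Step 1: (1) implies (2).} Apply \eqref{eq:composite-vi} with $\eta_h$ replaced by $\rho_t$. Linearity of the pairing gives $\rho_t-\mu_h^\varepsilon=t(\eta_h-\mu_h^\varepsilon)$, so
\[
t\langle J(\mu_h^\varepsilon,\cdot),\eta_h-\mu_h^\varepsilon\rangle+\varepsilon[R_h(\rho_t)-R_h(\mu_h^\varepsilon)]\ge0 .
\]
Divide by $t>0$ and let $t\downarrow0$. The displayed limit then yields \eqref{eq:euler-vi}.

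\emph{Step 2: (2) implies (1).} Use the convexity gradient inequality. For $t\in(0,1]$, convexity gives $R_h(\rho_t)\le(1-t)R_h(\mu_h^\varepsilon)+tR_h(\eta_h)$. Hence the difference quotient $[R_h(\rho_t)-R_h(\mu_h^\varepsilon)]/t$ is bounded above by $R_h(\eta_h)-R_h(\mu_h^\varepsilon)$. Letting $t\downarrow0$ gives
\[
R_h(\eta_h)-R_h(\mu_h^\varepsilon)\ge \left\langle \frac{\delta R_h}{\delta\mu}(\mu_h^\varepsilon,\cdot),\eta_h-\mu_h^\varepsilon\right\rangle .
\]
This also holds when $R_h(\eta_h)=+\infty$. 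Multiply by $\varepsilon$, add $\langle J(\mu_h^\varepsilon,\cdot),\eta_h-\mu_h^\varepsilon\rangle$, and use \eqref{eq:euler-vi}. This gives \eqref{eq:composite-vi}.

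The main obstacle is mostly bookkeeping, and it sits in Step 1. There we need finiteness of $R_h(\mu_h^\varepsilon)$, which is implicit in differentiability at that point, and we need the limit to exist along the segment. Both follow from the assumed directional differentiability at a relative-interior point. The potentially infinite values $R_h(\eta_h)=+\infty$ are harmless in both directions: in Step 1 they make the hypothesis at $\eta_h$ vacuous but do not affect $\rho_t$ for small $t$ (in the entropy case $R_h$ is finite on all of $\mathcal P(X_h)$ anyway); in Step 2 they make the target inequality trivial.
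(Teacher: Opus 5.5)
Your proposal is correct and follows the paper's proof: for (1)$\Rightarrow$(2) you test the composite VI at the mixture $(1-t)\mu_h^\varepsilon+t\eta_h$, divide by $t$, and let $t\downarrow0$, and for (2)$\Rightarrow$(1) you use the convexity (supporting-hyperplane) inequality for $R_h$. You also justify that inequality through the difference-quotient bound where the paper simply invokes convexity, and your handling of the finiteness of $R_h(\mu_h^\varepsilon)$ and of the case $R_h(\eta_h)=+\infty$ fills in bookkeeping the paper leaves implicit.
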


\begin{proof}
We first prove that \eqref{eq:composite-vi} implies
\eqref{eq:euler-vi}. Fix \(\eta_h\in\mathcal P(X_h)\) and set
\[
    \eta_{h,t}:=(1-t)\mu_h^\varepsilon+t\eta_h,
    \qquad t\in(0,1).
\]
Since \(\mathcal P(X_h)\) is convex, \(\eta_{h,t}\in\mathcal P(X_h)\). Applying
\eqref{eq:composite-vi} with \(\eta_{h,t}\) gives
\[
    \left\langle
        J(\mu_h^\varepsilon,\cdot),
        \eta_{h,t}-\mu_h^\varepsilon
    \right\rangle
    +
    \varepsilon
    \left[
        R_h(\eta_{h,t})-R_h(\mu_h^\varepsilon)
    \right]
    \ge0.
\]
Since
\[
    \eta_{h,t}-\mu_h^\varepsilon
    =
    t(\eta_h-\mu_h^\varepsilon),
\]
we have
\[
    t
    \left\langle
        J(\mu_h^\varepsilon,\cdot),
        \eta_h-\mu_h^\varepsilon
    \right\rangle
    +
    \varepsilon
    \left[
        R_h(\mu_h^\varepsilon+t(\eta_h-\mu_h^\varepsilon))
        -
        R_h(\mu_h^\varepsilon)
    \right]
    \ge0.
\]
Dividing by \(t>0\), we obtain
\[
    \left\langle
        J(\mu_h^\varepsilon,\cdot),
        \eta_h-\mu_h^\varepsilon
    \right\rangle
    +
    \varepsilon
    \frac{
        R_h(\mu_h^\varepsilon+t(\eta_h-\mu_h^\varepsilon))
        -
        R_h(\mu_h^\varepsilon)
    }{t}
    \ge0.
\]
Letting \(t\downarrow0\) and using differentiability of \(R_h\) at
\(\mu_h^\varepsilon\), we get
\[
    \left\langle
        J(\mu_h^\varepsilon,\cdot),
        \eta_h-\mu_h^\varepsilon
    \right\rangle
    +
    \varepsilon
    \left\langle
        \frac{\delta R_h}{\delta\mu}
        (\mu_h^\varepsilon,\cdot),
        \eta_h-\mu_h^\varepsilon
    \right\rangle
    \ge0.
\]
This is exactly \eqref{eq:euler-vi}.

Conversely, assume \eqref{eq:euler-vi}. By convexity of \(R_h\), for every
\(\eta_h\in\mathcal P(X_h)\),
\[
    R_h(\eta_h)-R_h(\mu_h^\varepsilon)
    \ge
    \left\langle
        \frac{\delta R_h}{\delta\mu}
        (\mu_h^\varepsilon,\cdot),
        \eta_h-\mu_h^\varepsilon
    \right\rangle.
\]
Therefore,
\[
\begin{aligned}
    \left\langle
        J(\mu_h^\varepsilon,\cdot),
        \eta_h-\mu_h^\varepsilon
    \right\rangle
    +
    \varepsilon
    \left[
        R_h(\eta_h)-R_h(\mu_h^\varepsilon)
    \right]
    \ge
    \left\langle
        J(\mu_h^\varepsilon,\cdot)
        +
        \varepsilon
        \frac{\delta R_h}{\delta\mu}
        (\mu_h^\varepsilon,\cdot),
        \eta_h-\mu_h^\varepsilon
    \right\rangle
    \ge0.
\end{aligned}
\]
Hence \eqref{eq:composite-vi} holds. This proves the equivalence.
\end{proof}

In our setting,
\(
    R_h(\mu_h):=D_{\rm KL}(\mu_h\|\rho_h),
\)
and \(\rho_h\) has full support on \(X_h\). The composite regularized VI reads
\[
    \left\langle
        J(\mu_h^\varepsilon,\cdot),
        \eta_h-\mu_h^\varepsilon
    \right\rangle
    +
    \varepsilon
    \left[
        D_{\rm KL}(\eta_h\|\rho_h)
        -
        D_{\rm KL}(\mu_h^\varepsilon\|\rho_h)
    \right]
    \ge0,
    \qquad
    \forall \eta_h\in\mathcal P(X_h).
\]
Equivalently, 
\[
    \mu_h^\varepsilon
    \in
    \operatorname*{argmin}_{\eta_h\in\mathcal P(X_h)}
    \left\{
        \left\langle
            J(\mu_h^\varepsilon,\cdot),
            \eta_h
        \right\rangle
        +
        \varepsilon D_{\rm KL}(\eta_h\|\rho_h)
    \right\}.
\]
The minimizer of this strictly convex problem is explicitly given by the
Gibbs formula
\[
    \mu_h^\varepsilon(x)
    =
    \frac{
        \rho_h(x)\exp\left(-J(\mu_h^\varepsilon,x)/\varepsilon\right)
    }{
        \sum_{y\in X_h}
        \rho_h(y)\exp\left(-J(\mu_h^\varepsilon,y)/\varepsilon\right)
    },
    \qquad x\in X_h.
\]
In particular,
\[
    \mu_h^\varepsilon(x)>0,
    \qquad x\in X_h,
\]
so \(\mu_h^\varepsilon\in\mathcal P(X_h)\) lies in the relative interior of
\(\mathcal P(X_h)\), and the KL functional $R_h$ is differentiable at \(\mu_h^\varepsilon\). Its first
variation is
\[
    \frac{\delta R_h}{\delta\mu}
    (\mu_h^\varepsilon,x)
    =
    \log\frac{\mu_h^\varepsilon(x)}{\rho_h(x)}+1.
\]
Since \(\eta_h-\mu_h^\varepsilon\) has total mass zero, the additive constant
\(1\) does not contribute. Therefore the composite KL-regularized VI is
equivalent to the Euler VI
\[
    \left\langle
        J(\mu_h^\varepsilon,\cdot)
        +
        \varepsilon
        \log\frac{\mu_h^\varepsilon}{\rho_h},
        \eta_h-\mu_h^\varepsilon
    \right\rangle
    \ge0,
    \qquad
    \forall \eta_h\in\mathcal P(X_h).
\]

\begin{proof}[{\bf Proof of Theorem~\ref{thm:joint-mesh-tikhonov-selection}}]
We first prove a selection estimate. By Lemma~\ref{lem:composite-euler-equivalence}, \(\mu_h^{\varepsilon_h}\in\mathcal P(X_h)\) satisfies
\[
    \langle J(\mu_h^{\varepsilon_h},\cdot),\eta_h-\mu_h^{\varepsilon_h}\rangle
    +
    \varepsilon_h
    \left[
        R_h(\eta_h)-R_h(\mu_h^{\varepsilon_h})
    \right]
    \ge0,
    \qquad
    \forall \eta_h\in\mathcal P(X_h).
\]
In particular, setting \(\eta_h=\pi_h:=Q_{h\#}\mu^\dagger\) gives
\[
    \langle J(\mu_h^{\varepsilon_h},\cdot),
    \pi_h-\mu_h^{\varepsilon_h}\rangle
    +
    \varepsilon_h
    \left[
        R_h(\pi_h)-R_h(\mu_h^{\varepsilon_h})
    \right]
    \ge0.
\]
By monotonicity and Lemma~\ref{lem:projected-equilibrium-approx-discrete},
\[
\begin{aligned}
    \langle J(\mu_h^{\varepsilon_h},\cdot),
    \mu_h^{\varepsilon_h}-\pi_h\rangle
    =
    \langle J(\mu_h^{\varepsilon_h},\cdot)-J(\pi_h,\cdot),
    \mu_h^{\varepsilon_h}-\pi_h\rangle
    +
    \langle J(\pi_h,\cdot),
    \mu_h^{\varepsilon_h}-\pi_h\rangle
    \ge
    -(2L_m+L_x)h.
\end{aligned}
\]
Therefore
\[
    R_h(\mu_h^{\varepsilon_h})
    \le
    R_h(\pi_h)+\frac{(2L_m+L_x)h}{\varepsilon_h}.
\]
By Lemma~\ref{lem:entropy-consistency-mesh-projection},
\[
    R_h(\pi_h)
    =
    D_{\rm KL}(Q_{h\#}\mu^\dagger\|Q_{h\#}\rho)
    \to
    D_{\rm KL}(\mu^\dagger\|\rho)
    =
    R(\mu^\dagger).
\]
Since \(h/\varepsilon_h\to0\), we have
\[
    \limsup_{h\to0}R_h(\mu_h^{\varepsilon_h})
    \le
    R(\mu^\dagger).
\]

By compactness of \(\mathcal P(\Omega)\), every sequence
\(\mu_h^{\varepsilon_h}\) has weakly convergent subsequences. Let
\[
    \mu_h^{\varepsilon_h}\Rightarrow \bar\mu
\]
along such a subsequence. We show that \(\bar\mu\in\mathcal S\). 

Let
\(\eta\in\mathcal P(\Omega)\), and set
\[
    \eta_h:=Q_{h\#}\eta.
\]
The regularized VI gives
\[
    \langle J(\mu_h^{\varepsilon_h},\cdot),
    \eta_h-\mu_h^{\varepsilon_h}\rangle
    \ge
    \varepsilon_h
    \left[
        R_h(\mu_h^{\varepsilon_h})-R_h(\eta_h)
    \right].
\]
We have
\[
R_h(\mu_h^{\varepsilon_h})\ge0,\qquad
    R_h(\eta_h)
    \le
    \log\frac1{\rho_{\min,h}}.
\]
The balance condition
\[
    \varepsilon_h\log\frac1{\rho_{\min,h}}\to0
\]
therefore yields
\[
    \liminf_{h\to0}
    \langle J(\mu_h^{\varepsilon_h},\cdot),
    \eta_h-\mu_h^{\varepsilon_h}\rangle
    \ge0.
\]
Using weak convergence, the fact that \(\eta_h\Rightarrow\eta\), and the
uniform convergence implied by Assumption~\ref{ass:W1-lip}, we pass to the
limit and obtain
\[
    \langle J(\bar\mu,\cdot),\eta-\bar\mu\rangle\ge0.
\]
Since \(\eta\in\mathcal P(\Omega)\) was arbitrary, \(\bar\mu\in\mathcal S\).

Finally, by the joint lower semicontinuity of relative entropy,
\[
    R(\bar\mu)
    =
    D_{\rm KL}(\bar\mu\|\rho)
    \le
    \liminf_{h\to0}
    D_{\rm KL}(\mu_h^{\varepsilon_h}\|\rho_h)
    =
    \liminf_{h\to0}R_h(\mu_h^{\varepsilon_h}).
\]
Combining this with the previous limsup estimate gives
\[
    R(\bar\mu)\le R(\mu^\dagger).
\]
Since \(\bar\mu\in\mathcal S\) and \(\mu^\dagger\) is the unique minimizer of
\(R\) over \(\mathcal S\), we conclude that
\[
    \bar\mu=\mu^\dagger.
\]
Every subsequential limit is therefore \(\mu^\dagger\), and hence
\[
    \mu_h^{\varepsilon_h}\Rightarrow \mu^\dagger.
\]
\end{proof}
\end{document}